\definecolor{mydarkgreen}{RGB}{39,130,67}
\definecolor{mydarkorange}{RGB}{236,147,14}
\definecolor{mydarkred}{RGB}{236,147,14}
\definecolor{blue}{RGB}{0,0,255}
\DeclareSymbolFont{extraup}{U}{zavm}{m}{n}
\DeclareMathSymbol{\varheart}{\mathalpha}{extraup}{86}
\DeclareMathSymbol{\vardiamond}{\mathalpha}{extraup}{87}
\DeclareSymbolFont{extraup}{U}{zavm}{m}{n}
\DeclareMathSymbol{\varheart}{\mathalpha}{extraup}{86}
\DeclareMathSymbol{\vardiamond}{\mathalpha}{extraup}{87}
\renewcommand{\eqref}[1]{(\ref{#1})}
\definecolor{bgcolor}{rgb}{0.76,0.88,0.50}
\definecolor{bgcolor0}{rgb}{0.93,0.99,1}
\definecolor{bgcolor1}{rgb}{0.8,1,1}
\definecolor{bgcolor2}{rgb}{0.8,1,0.8}
\definecolor{bgcolor3}{rgb}{0.50,0.90,0.50}
\newcommand{\algname}[1]{{#1}}
\newcommand{\norm}[1]{\left\| #1 \right\|}
\newcommand{\inp}[2]{\left\langle#1,#2\right\rangle} % inner product
\newcommand{\R}{\mathbb{R}} % reals
\newcommand{\N}{\mathbb{N}} % reals
\newcommand{\cO}{\mathcal{O}}
\theoremstyle{plain}
\newtheorem{theorem}{Theorem}[section]
\newtheorem*{theorem*}{Theorem}
\newtheorem{lemma}[theorem]{Lemma}
\newtheorem{corollary}[theorem]{Corollary}
\theoremstyle{definition}
\newtheorem{definition}[theorem]{Definition}
\newtheorem{assumption}[theorem]{Assumption}
\theoremstyle{remark}
\newcommand{\eqdef}{:=}
\newcommand{\vast}{\bBigg@{4}}
\icmltitlerunning{Near-Optimal Convergence of Accelerated Gradient Methods under Generalized Smoothness}
\begin{document}

\twocolumn[
  \icmltitle{Near-Optimal Convergence of Accelerated Gradient Methods \\ under Generalized and $(L_0, L_1)$--Smoothness}

  % It is OKAY to include author information, even for blind submissions: the
  % style file will automatically remove it for you unless you've provided
  % the [accepted] option to the icml2026 package.

  % List of affiliations: The first argument should be a (short) identifier you
  % will use later to specify author affiliations Academic affiliations
  % should list Department, University, City, Region, Country Industry
  % affiliations should list Company, City, Region, Country

  % You can specify symbols, otherwise they are numbered in order. Ideally, you
  % should not use this facility. Affiliations will be numbered in order of
  % appearance and this is the preferred way.
  \icmlsetsymbol{equal}{*}

  \begin{icmlauthorlist}
    \icmlauthor{Alexander Tyurin}{yyy,xxx}
  \end{icmlauthorlist}

  \icmlaffiliation{yyy}{AXXX, Moscow, Russia}
  \icmlaffiliation{xxx}{Applied AI Institute, Moscow, Russia}

  \icmlcorrespondingauthor{Alexander Tyurin}{alexandertiurin@gmail.com}

  % You may provide any keywords that you find helpful for describing your
  % paper; these are used to populate the "keywords" metadata in the PDF but
  % will not be shown in the document
  \icmlkeywords{Machine Learning, ICML}

  \vskip 0.3in
]

% this must go after the closing bracket ] following \twocolumn[ ...

% This command actually creates the footnote in the first column listing the
% affiliations and the copyright notice. The command takes one argument, which
% is text to display at the start of the footnote. The \icmlEqualContribution
% command is standard text for equal contribution. Remove it (just {}) if you
% do not need this facility.

% Use ONE of the following lines. DO NOT remove the command.
% If you have no special notice, KEEP empty braces:
\printAffiliationsAndNotice{}  % no special notice (required even if empty)
% Or, if applicable, use the standard equal contribution text:
% \printAffiliationsAndNotice{\icmlEqualContribution}

\begin{abstract}
We study first‐order methods for convex optimization problems with functions \(f\) satisfying the recently proposed $\ell$-smoothness condition  
$\norm{\nabla^{2}f(x)} \le \ell\left(\norm{\nabla f(x)}\right),$  
which generalizes the \(L\)--smoothness and $(L_{0},L_{1})$--smoothness.  
While accelerated gradient descent (\algname{AGD}) is known to reach the optimal complexity  
$\cO(\sqrt{L} R / \sqrt{\varepsilon})$ under $L$--smoothness, where $\varepsilon$ is an error tolerance and $R$ is the distance between a starting and an optimal point, existing extensions to $\ell$--smoothness either incur extra dependence on the initial gradient, suffer exponential factors in $L_{1} R$, or require costly auxiliary sub-routines, leaving open whether an \algname{AGD}‐type $\cO(\sqrt{\ell(0)} R / \sqrt{\varepsilon})$ rate is possible for small--$\varepsilon$, even in the $(L_{0},L_{1})$-smoothness case. We resolve this open question. Developing new proof techniques, we achieve $\cO(\sqrt{\ell(0)} R / \sqrt{\varepsilon})$ oracle complexity for small--$\varepsilon$ and virtually any $\ell$.  
For instance, for \((L_{0},L_{1})\)-smoothness, our bound $\cO(\sqrt{L_0} R / \sqrt{\varepsilon})$ is provably optimal in the small-$\varepsilon$ regime and removes all non-constant multiplicative factors present in prior accelerated algorithms.
\end{abstract}

\section{Introduction}
We focus on optimization problems
\begin{align}
\label{eq:main_problem}
\textstyle \min\limits_{x \in \R^d} f(x),
\end{align}
where $f\,:\,\R^d \to \R \cup \{\infty\}$ is a convex function. We aim to find an $\varepsilon$-solution, $\bar{x} \in \R^d,$ such that $f(\bar{x}) - \inf_{x \in \R^d} f(x) \leq \varepsilon.$ We define $\mathcal{X} = \left\{x \in \R^d\,|\, f(x) < \infty\right\},$ and assume that $\mathcal{X}$ is an open and $d$--dimensional convex set, $f$ is smooth on $\mathcal{X},$ and continuous on the closure of $\mathcal{X}.$ We define $R \eqdef \norm{x^0 - x^*},$ where $x^0 \in \mathcal{X}$ is a starting point of numerical methods.

Under the $L$--smoothness assumption, i.e., $\norm{\nabla f(x) - \nabla f(y)} \leq L \norm{x - y}$ or $\norm{\nabla^2 f(x)} \leq L$ for all $x, y \in \mathcal{X},$ the problem is well studied. In particular, it is known that one can find an $\varepsilon$-solution after $\cO\left(\nicefrac{\sqrt{L} R}{\sqrt{\varepsilon}}\right)$ gradient calls using the fast/accelerated gradient descent method (\algname{AGD}) by \citet{nesterov1983method}, which is also optimal \citep{nemirovskij1983problem,nesterov2018lectures}. This result improves the oracle complexity $\cO\left(\nicefrac{L R^2}{\varepsilon}\right)$ (\# of gradient calculations) of gradient descent (\algname{GD}).

In this work, we investigate the modern $\ell$--smoothness assumption \citep{li2024convex}, which states that $\norm{\nabla^2 f(x)} \leq \ell(\norm{\nabla f(x)})$ for all $x \in \mathcal{X}$ (see Assumption~\ref{ass:gen_smooth}), where $\ell$ is any non-decreasing, positive, locally Lipschitz function. This generalizes the classical $L$--smoothness assumption, which corresponds to the special case $\ell(s) = L$.
An important example of this framework is the $(L_0, L_1)$--smoothness condition \citep{zhang2019gradient}, obtained by setting $\ell(s) = L_0 + L_1 s$, which yields $\norm{\nabla^2 f(x)} \leq L_0 + L_1 \norm{\nabla f(x)}$ for all $x \in \mathcal{X}.$

There are many functions that are captured by $\ell$--smoothness but not by $L$--smoothness. For instance, $f(x) = x^p$ for $p > 2,$ $f(x) = e^x,$ and $f(x) = -\log x$ all satisfy $\ell$--smoothness (with a proper $\ell$) but violate the standard $L$--smoothness condition \citep{li2024convex}. Moreover, there is growing evidence that $\ell$--smoothness is a more appropriate assumption for modern machine learning problems \citep{zhang2019gradient,chen2023generalized,cooper2024theoretical,tyurin2024toward}.

Despite the recent significant interest in $\ell$--smoothness, to the best of our knowledge, one important \emph{open problem} remains:
\begin{quote}
Under $\ell$--smoothness and $(L_0, L_1)$--smoothness, for a small $\varepsilon,$ is it possible to design a method with oracle complexity $\cO\big(\nicefrac{\sqrt{\ell(0)} R}{\sqrt{\varepsilon}}\big)$ and $\cO\big(\nicefrac{\sqrt{L_0} R}{\sqrt{\varepsilon}}\big),$ respectively?
\end{quote}
In this work, using new proof techniques, we provide an \emph{affirmative answer} to this question by developing new approaches that work for all $\varepsilon > 0$ and achieve the optimal complexity under $(L_0, L_1)$–smoothness for small $\varepsilon$.
\begin{table*}[t]
  % \caption{Convergence rates for various \algname{AGD} methods for small error tolerance $\varepsilon$ up to constant factors (in the case of $(L_0, L_1$)-smoothness, the comparison is valid at least for all $\varepsilon \leq L_0 / L_1^4 R^2$). Abbreviations: $R \eqdef \norm{x^0 - x^*},$ $\varepsilon =$ error tolerance, $x^0$ is a starting point, $\Delta \eqdef f(x^0) - f(x^*),$ $M_{\bar{R}}$ is defined in Theorem~\ref{thm:convex_increasing_super}.}
  \caption{Convergence rates for various \algname{AGD} methods up to constant factors.
  % (in the case of $(L_0, L_1$)-smoothness, the comparison is valid at least for all $\varepsilon \leq L_0 / L_1^4 R^2$). 
  Abbreviations: $R \eqdef \norm{x^0 - x^*},$ $\varepsilon =$ error tolerance, $x^0$ is a starting point, $\Delta \eqdef f(x^0) - f(x^*),$ $M_{R}$ is defined in Theorem~\ref{thm:convex_increasing_super}.}
  \label{table:complexities_convex}
  \centering 
  % \scriptsize
  % \begin{adjustbox}{width=1.98\columnwidth,center}
  \begin{adjustbox}{width=2\columnwidth,center}
  \begin{threeparttable}
  \bgroup
  \def\arraystretch{3.4}%  1 is the default, change whatever you need
  \begin{tabular}[t]{c|ccc}
   \Xhline{0.5pt}
   \hline
   \bf Setting & \bf Oracle Complexity & \bf References & \bf \makecell{Required Input Parameters \\ to Algorithm} \\
    \Xhline{0.5pt}
     \hline
     \makecell{$L$--Smoothness} & ${\color{mydarkgreen} \frac{\sqrt{L} R}{\sqrt{\varepsilon}}}$ & \citep{nesterov1983method} & $L$ \\
     \Xhline{0.5pt}
     \hline
      \multirow{4}{*}{\makecell{$(L_0, L_1)$--Smoothness}} 
    %  & $\geq$\tnote{\color{blue}(a)}$\quad \frac{\sqrt{L_0 + {\color{red} L_1 \norm{\nabla f(x^0)}}} R}{\sqrt{\varepsilon}}$
    %  & \citep{li2024convex} & $L_0, L_1, R, \Delta$ \\ \cline{2-4}
     & $\geq$\tnote{\color{blue}(a)}$\quad {\color{red}\left(L_1^2 R^2 + \frac{L_1^2 \Delta}{L_0} + 1\right)} \sqrt{\frac{\Delta + L_0 R^2}{\varepsilon}}$
     & \citep{li2024convex} & $L_0, L_1, R, \Delta$ \\ \cline{2-4}
     & ${\color{red}\sqrt{1 + L_1 R \exp(L_1 R)}} \times \frac{\sqrt{L_0} R}{\sqrt{\varepsilon}}$ & \citep{gorbunov2024methods} & $L_0, L_1$\\
     \cline{2-4}
     & \makecell{${\color{red} \nu} \times \left(\frac{\sqrt{L_0} R}{\sqrt{\varepsilon}} + (L_1 R)^{2/3} \log\left(\frac{\Delta}{\varepsilon}\right)\right),$ \\ where $\nu$ is not a universal constant\textsuperscript{\color{blue}(b)} and \\ may depend on parameters of $f,$$\varepsilon,$ and $R$ } & \citep{vankov2024optimizing} & \makecell{$L_0, L_1,$ \\ params for auxiliary problem \\ (e.g., \# of inner iterations)} \\
     \cline{2-4}
     & ${\color{mydarkgreen} \frac{\sqrt{L_0} R}{\sqrt{\varepsilon}}} + L_1 R \log\Big(\min \Big\{\frac{L_1^2 \Delta}{L_0}, \frac{\Delta}{\varepsilon}\Big\}\Big)$ & \makecell{Sec.~\ref{sec:example_1}, \ref{sec:example_2}, \\ or Thm.~\ref{thm:convex_increasing_better_rate_refine} (\textbf{new})} & \makecell{$L_0, L_1, R, \Delta$ \\ (semi-adaptive to $R, \Delta$)} \\
     \Xhline{0.5pt}
     \hline
     \multirow{2.0}{*}{\makecell{General result \\ with any $\ell$}} & $\geq$\tnote{\color{blue}(a)}$\quad \frac{{\color{red}\sqrt{\ell\left(\norm{\nabla f(x^0)}\right)}} R}{\sqrt{\varepsilon}}$ & \makecell{\citep{li2024convex}} & $L_0, L_1, R, \Delta$ \\ \cline{2-4}
     & \makecell{${\color{mydarkgreen} \frac{\sqrt{\ell(0)} R}{\sqrt{\varepsilon}}} + T,$ \\ where $T$ does not depend on $\varepsilon$} & Corollary~\ref{cor:general} (\textbf{new}) & $L_0, L_1, R, \Delta, M_{R}$ \\
    \Xhline{0.5pt}
    \hline
    \end{tabular}
    \egroup
    \begin{tablenotes}
% \scriptsize
\item [{\color{blue}(a)}] The obtained result is not better than this complexity. For the $(L_0,L_1)$--smoothness, the explicit formula is derived in \citep{vankov2024optimizing}. The norm $\norm{\nabla f(x^0)}$ can be exponentially large in the parameters $L_1$ and $R$ (e.g., take the function $\exp(L_1 x)$).
\item [{\color{blue}(b)}] The quantity $\nu$ arises because the accelerated method of \citet{vankov2024optimizing} solves an auxiliary problem at each iteration, requiring $\nu$ additional oracle calls. Their analysis also assumes that this auxiliary problem is solved exactly, which may not hold in practice and theory, and the stability to solution errors is unclear. In contrast, our method and the methods of \cite{li2024convex,gorbunov2024methods} compute only one gradient per iteration.
\end{tablenotes}
\end{threeparttable}
\end{adjustbox}
\end{table*}

\subsection{Related work}
\label{sec:related_work}
\textbf{Nonconvex optimization with $(L_0, L_1)$--smoothness.} 
While we focus on convex problems, we now recall the modern results in the non-convex setting. \citet{zhang2019gradient} is the seminal work that considers $(L_0, L_1)$--smoothness. They developed a clipped version of \algname{GD} that finds an $\varepsilon$--stationary point after $\cO\left(\nicefrac{L_0 \Delta}{\varepsilon} + \nicefrac{L_1^2 \Delta}{L_0}\right)$ iterations\footnote{An $\varepsilon$--stationary point is a point $\bar{x}$ such that $\norm{\nabla f(\bar{x})}^2 \leq \varepsilon;$ $\Delta \eqdef f(x^0) - f^*,$ where $x^0$ is a starting point of numerical methods.}. There are many subsequent works on $(L_0, L_1)$--smoothness, including \citep{crawshaw2022robustness,chen2023generalized,wang2023convergence,koloskova2023revisiting,li2024convex,li2024convergence,hubler2024parameter,vankov2024optimizing}. Under $(L_0, L_1)$--smoothness, the state-of-the-art theoretical oracle complexity $\cO\left(\nicefrac{L_0 \Delta}{\varepsilon} + \nicefrac{L_1 \Delta}{\sqrt{\varepsilon}}\right)$ was proved by \citet{vankov2024optimizing}. 

\textbf{Nonconvex optimization with $\ell$--smoothness.} 
The paper by \citet{li2024convex} is the seminal work that introduces the $\ell$--smoothness assumption. In their version of \algname{GD}, the result depends on $\ell(\norm{\nabla f(x^0)}) / \varepsilon$ and requires $\ell$ to grow more slowly than $s^2$. Subsequently, \citet{tyurin2024toward} improved their oracle
complexity and provided the current state-of-the-art complexity. For instance, under $(\rho, L_0, L_1)$--smoothness, i.e., $\norm{\nabla^2 f(x)} \leq L_0 + L_1 \norm{\nabla f(x)}^{\rho}$ for all $x \in \mathcal{X},$ \citet{tyurin2024toward} guarantee $\nicefrac{L_0 \Delta}{\varepsilon} + {\nicefrac{L_1 \Delta}{\varepsilon^{(2 - \rho) / 2}}}$ instead of $\nicefrac{(L_0 \Delta + L_1 \norm{\nabla f(x^0)}^{\rho} \Delta)}{\varepsilon}$ from \citet{li2024convex} when $0 \leq \rho \leq 2.$

\textbf{Convex optimization with $(L_0, L_1)$--smoothness and $\ell$--smoothness.} Under the $(L_0, L_1)$--smoothness assumption, convex problems were considered in \citep{koloskova2023revisiting,li2024convex,takezawa2024polyak}. \citet{gorbunov2024methods} obtained the oracle complexity $\cO\left(\nicefrac{L_0 R^2}{\varepsilon} + L_1^2 R^2\right).$ Then, the non-dominant term $L_1^2 R^2$ was improved to $\nicefrac{L_0 R^2}{\varepsilon} + \min\left\{\nicefrac{L_1 \Delta^{1 / 2} R}{\varepsilon^{1 / 2}}, L_1^2 R^2, \nicefrac{L_1 \norm{\nabla f(x^0)} R^2}{\varepsilon}\right\}$ by \citet{tyurin2024toward}. Moreover, \citet{vankov2024optimizing} improved the convergence rate of GD to $\cO\left(\nicefrac{L_0 R^2}{\varepsilon} + L_1 R \log\left(\nicefrac{\Delta}{\varepsilon}\right)\right)$ \citet{lobanov2024linear} also analyzed the possibility of improving $L_1^2 R^2$ in the region where the gradient of $f$ is large. The $\ell$--smoothness assumption in the contexts of online learning and mirror descent was considered in \citep{xie2024gradient,yu2025mirror}. 

\textbf{Accelerated convex optimization.}
The aforementioned results were derived using non-accelerated gradient descent methods. Under $(L_0, L_1)$--smoothness, accelerated variants of \algname{GD} were studied by \citet{li2024convex,gorbunov2024methods,vankov2024optimizing}. However, for small $\varepsilon,$ the approach of \citet{gorbunov2024methods} leads to the complexity $\exp(L_1 R) \nicefrac{\sqrt{L_0} R}{\sqrt{\varepsilon}}$ (up to constant factors), with an exponential dependence on $L_1$ and $R,$ while the method proposed by \citet{vankov2024optimizing} requires solving an auxiliary one-dimensional optimization problem at each iteration, leading to the oracle complexity $\cO\left(\nu \times \nicefrac{\sqrt{L_0} R}{\sqrt{\varepsilon}}\right),$ where $\nu$ is a non-constant multiplicative factor arising from solving the auxiliary problem. 
Moreover, \citet{vankov2024optimizing} analyze their method assuming that the auxiliary problem can be solved exactly, which may not be the case in practice and theory, and it is unclear how stable their method is to errors arising from solving the auxiliary problem.
In the context of the $\ell$--smoothness assumption, \citet{li2024convex} established a complexity bound of $\cO(\nicefrac{\sqrt{\ell(\norm{\nabla f(x^0)})} R}{\sqrt{\varepsilon}}).$ The current state-of-the-art accelerated methods leave open the question of whether it is possible to achieve the oracle complexities $\cO\big(\nicefrac{\sqrt{L_0} R}{\sqrt{\varepsilon}}\big)$ and $\cO\big(\nicefrac{\sqrt{\ell(0)} R}{\sqrt{\varepsilon}}\big)$ when $\varepsilon$ is small.

\emph{Remark.} We note the concurrent work \citep{borodich2025nesterov}, which also obtains optimal complexity in the small-$\varepsilon$ regime. Their result appeared online after the initial version of our paper on arXiv. Compared with \eqref{eq:iDSJQFqPsREzFtnxi} and \eqref{eq:jKlCMcGdKddup}, their bound has a larger non-dominant term and is established for the $(L_0,L_1)$--smoothness setting, whereas our result applies under the more general $\ell$--smoothness assumption. On the other hand, their algorithm is adaptive.

\subsection{Contributions}
% We develop two new algorithms, Algorithms~\ref{alg:main} and \ref{alg:main_new}, which, to the best of our knowledge, achieve for the first time oracle 
We develop new proof techniques to analyze Algorithms~\ref{alg:main} and \ref{alg:main_new}, which, to the best of our knowledge, achieve for the first time the oracle complexities $\cO\big(\nicefrac{\sqrt{\ell(0)} R}{\sqrt{\varepsilon}} + T\big)$ and $\cO\big(\nicefrac{\sqrt{L_0} R}{\sqrt{\varepsilon}} + L_1 R \log\left(\min \left\{\nicefrac{L_1^2 \Delta}{L_0}, \nicefrac{\Delta}{\varepsilon}\right\}\right)\big)$ for all $\varepsilon > 0$ under $\ell$--smoothness and $(L_0, L_1)$--smoothness, respectively, where $T$ does not depend on $\varepsilon$.
% complexities of $\cO\big(\nicefrac{\sqrt{\ell(0)} R}{\sqrt{\varepsilon}}\big)$ and $\cO\big(\nicefrac{\sqrt{L_0} R}{\sqrt{\varepsilon}}\big)$ for small $\varepsilon$, under $\ell$--smoothness and $(L_0, L_1)$--smoothness, respectively. 
These results represent a significant improvement over previous works \citep{li2024convex,gorbunov2024methods,vankov2024optimizing} (Table~\ref{table:complexities_convex}) since, for instance, our bound under $(L_0, L_1)$--smoothness is optimal in the small-$\varepsilon$ regime \citep{nesterov2018lectures}, and improves the previous bounds at least in the scenarios when $\varepsilon \leq L_0 / (L_1^4 R^2).$
% To obtain these results, we developed new proof techniques that may be of independent interest.

We begin in Section~\ref{sec:subquadratic}, which establishes the $\cO\big(\nicefrac{\sqrt{\ell(0)} R}{\sqrt{\varepsilon}}\big)$ rate for small~$\varepsilon$ with subquadratic and quadratic $\ell$. In Section~\ref{sec:semistability}, we present Algorithm~\ref{alg:main_new}, which is more robust to input parameters and achieves an improved rate in the non-dominant terms, at least in the $(L_0, L_1)$--smooth case. Finally, in Section~\ref{sec:super}, we show that Algorithm~\ref{alg:main} attains the $\cO\big(\nicefrac{\sqrt{\ell(0)} R}{\sqrt{\varepsilon}}\big)$ rate (for small~$\varepsilon$) for all non-decreasing positive locally Lipschitz $\ell.$
\section{Preliminaries}
\textbf{Notations:} $\R_+ \eqdef [0, \infty);$ $\N \eqdef \{1, 2, \dots\};$ $\norm{x}$ denotes the standard Euclidean norm for all $x \in \R^d$; $\inp{x}{y} = \sum_{i=1}^{d} x_i y_i$ denotes the standard dot product; $\norm{A}$ denotes the standard spectral norm for all $A \in \R^{d \times d};$ $g = \cO(f):$ there exists $C > 0$ such that $g(z) \leq C \times f(z)$ for all $z \in \mathcal{Z};$ $g = \Omega(f):$ there exists $C > 0$ such that $g(z) \geq C \times f(z)$ for all $z \in \mathcal{Z};$ $[x]_+$ denotes $\max\{x, 0\};$
$g \simeq h:$ $g$ and $h$ are equal up to a universal positive constant; $\textnormal{Proj}_{\bar{\mathcal{X}}} (x)$ denotes the standard Euclidean projection of $x$ onto the convex closed set $\bar{\mathcal{X}}.$

We consider the following assumption \citep{li2024convex}:
\begin{assumption}
  \label{ass:gen_smooth}
  A function $f\,:\,\R^d \to \R \cup \{\infty\}$ is $\ell$--smooth if $f$ is twice differentiable on $\mathcal{X},$ $f$ is continuous on the closure of $\mathcal{X},$ and there exists a 
  \emph{non-decreasing positive locally Lipschitz} 
  function $\ell\,:\,[0, \infty) \to (0, \infty)$ such that
  \begin{align}
    \label{eq:main_ass}
    \norm{\nabla^2 f(x)} \leq \ell(\norm{\nabla f(x)})
  \end{align}
  for all $x \in \mathcal{X}.$
\end{assumption}
The assumption includes $L$--smoothness when $\ell(s) = L$, $(L_0, L_1)$--smoothness when $\ell(s) = L_0 + L_1 s$, and $(\rho, L_0, L_1)$--smoothness, i.e., $\norm{\nabla^2 f(x)} \leq L_0 + L_1 \norm{\nabla f(x)}^{\rho}$ for all $x \in \mathcal{X},$ when $\ell(s) = L_0 + L_1 s^{\rho},$ where $L, L_0, L_1, \rho \geq 0$ are some fixed constants.
While Assumption~\ref{ass:gen_smooth} requires twice differentiability, the main theorems and algorithms do not directly rely on it. Let us recall the following lemma, which follows from Assumption~\ref{ass:gen_smooth}:
\begin{lemma}[Lemma~4.3 in \cite{tyurin2024toward}]
  For all $x, y \in \mathcal{X}$ such that $\norm{y - x} \in [0, q_{\max}(\norm{\nabla f(x)})),$ if $f$ is $\ell$--smooth (Assumption~\ref{ass:gen_smooth}), then
  \begin{align}
    \label{eq:gen_smooth_alt}
    \norm{\nabla f(y) - \nabla f(x)} \leq q^{-1}(\norm{y - x};\norm{\nabla f(x)}),
  \end{align}
  where $q(s;a) \eqdef \int_{0}^{s} \frac{d v}{\ell(a + v)},$ $q^{-1}$ is the inverse of $q$ with respect to $s$, and $q_{\max}(a) \eqdef \int_{0}^{\infty} \frac{d v}{\ell(a + v)}.$
  \label{lemma:first_alt}
\end{lemma}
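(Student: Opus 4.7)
The plan is to parameterize the straight segment from $x$ to $y$ and reduce the claim to a one-dimensional Grönwall-type comparison between an integral inequality and an ODE. I would set $z(t) \eqdef x + t(y-x)$ for $t \in [0,1]$; since $\mathcal{X}$ is open and convex and $x,y \in \mathcal{X}$, the whole segment lies in $\mathcal{X}$, so Assumption~\ref{ass:gen_smooth} applies pointwise along it. Introduce
\[
\Phi(t) \eqdef \norm{\nabla f(z(t)) - \nabla f(x)}, \qquad \Phi(0) = 0.
\]
By the fundamental theorem of calculus, Assumption~\ref{ass:gen_smooth}, the triangle inequality $\norm{\nabla f(z(\tau))} \le \norm{\nabla f(x)} + \Phi(\tau)$, and monotonicity of $\ell$,
\[
\Phi(t) \;\le\; \norm{y-x}\int_0^t \norm{\nabla^2 f(z(\tau))}\,d\tau \;\le\; \norm{y-x}\int_0^t \ell\!\left(\norm{\nabla f(x)} + \Phi(\tau)\right) d\tau. \qquad (\star)
\]

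Next, I would introduce the comparison function $\Psi(t) \eqdef q^{-1}\!\left(t\norm{y-x};\,\norm{\nabla f(x)}\right)$. The hypothesis $\norm{y-x} < q_{\max}(\norm{\nabla f(x)})$, combined with strict monotonicity of $q(\,\cdot\,;\norm{\nabla f(x)})$ (a consequence of $\ell > 0$), makes $\Psi$ well-defined on $[0,1]$. Differentiating the identity $q(\Psi(t);\norm{\nabla f(x)}) = t\norm{y-x}$ shows that $\Psi$ is the unique solution of
\[
\Psi'(t) = \norm{y-x}\,\ell\!\left(\norm{\nabla f(x)} + \Psi(t)\right), \qquad \Psi(0) = 0.
\]
A comparison argument (sketched next) then yields $\Phi(t) \le \Psi(t)$ on $[0,1]$, and evaluating at $t = 1$ gives $\norm{\nabla f(y) - \nabla f(x)} = \Phi(1) \le \Psi(1) = q^{-1}(\norm{y-x}; \norm{\nabla f(x)})$, which is the stated bound.

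The step that requires care is the comparison $\Phi \le \Psi$: since $(\star)$ is only an integral inequality, and both $\Phi$ and $\Psi$ start at $0$, a naive first-crossing argument fails. The standard workaround is a perturbation trick. For small $\eta > 0$, let $\Psi_\eta$ solve the same ODE with $\Psi_\eta(0) = \eta$; this is well-defined on $[0,1]$ for $\eta$ sufficiently small, because $q(\Psi_\eta(t);\norm{\nabla f(x)}) = t\norm{y-x} + q(\eta;\norm{\nabla f(x)})$ and $q(\eta;\norm{\nabla f(x)}) \downarrow 0$ while $\norm{y-x} < q_{\max}$. Setting $D \eqdef \Psi_\eta - \Phi$, monotonicity of $\ell$ makes the integrand in
\[
D(t) \;\ge\; \eta + \norm{y-x}\int_0^t \left[\ell\!\left(\norm{\nabla f(x)} + \Psi_\eta(\tau)\right) - \ell\!\left(\norm{\nabla f(x)} + \Phi(\tau)\right)\right] d\tau
\]
non-negative on any interval where $D > 0$, so $D \ge \eta$ persists and no first zero can occur; hence $\Phi \le \Psi_\eta$ throughout $[0,1]$. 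Local Lipschitzness of $\ell$ gives continuous dependence of $\Psi_\eta$ on the initial datum, so sending $\eta \downarrow 0$ yields $\Phi \le \Psi$ and concludes the proof. This perturbation/comparison step is the only delicate point; everything else is a direct integration of the inequality furnished by Assumption~\ref{ass:gen_smooth}.
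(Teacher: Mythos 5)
Your proposal is correct. Note that the paper itself does not prove this lemma; it is imported verbatim from \citet{tyurin2024toward}, so there is no in-paper argument to compare against. Your reconstruction is the standard one and is sound: integrating the Hessian bound along the segment gives the integral inequality $(\star)$, the identity $q(\Psi(t);\norm{\nabla f(x)})=t\norm{y-x}$ correctly identifies $\Psi$ as the solution of the separable ODE $\Psi'=\norm{y-x}\,\ell(\norm{\nabla f(x)}+\Psi)$, and your $\eta$-perturbation handles the genuine obstruction that $\Phi$ and $\Psi$ share the initial value $0$ (a naive first-crossing argument would indeed stall there); moreover, the hypothesis $\norm{y-x}<q_{\max}(\norm{\nabla f(x)})$ is used exactly where it must be, to keep $\Psi_\eta$ defined on all of $[0,1]$ for small $\eta$, and the limit $\eta\downarrow 0$ can be taken simply from $\Psi_\eta(t)=q^{-1}\bigl(t\norm{y-x}+q(\eta;\norm{\nabla f(x)});\norm{\nabla f(x)}\bigr)$ and continuity of $q^{-1}$, without invoking Lipschitz dependence on initial data. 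The only step you leave implicit is the fundamental-theorem-of-calculus application when $f$ is merely twice differentiable rather than $C^2$: this is harmless because $\tau\mapsto\nabla f(z(\tau))$ is differentiable everywhere with derivative bounded on the compact segment (as $\norm{\nabla f(z(\cdot))}$ is continuous and $\ell$ is locally bounded), hence Lipschitz and absolutely continuous, so the integral representation is valid; one sentence to this effect would make the argument airtight.
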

Not requiring twice differentiability, we can assume that \eqref{eq:gen_smooth_alt} holds instead of \eqref{eq:main_ass}. The main reason why we start with \eqref{eq:main_ass} is because it is arguably more interpretable. Next, we assume the convexity of $f$:
\begin{assumption}
  \label{ass:convex}
  A function $f\,:\,\R^d \to \R \cup \{\infty\}$ is convex and attains the minimum at a (non-unique) $x^* \in \R^d.$ We define $R \eqdef \norm{x^0 - x^*},$ where $x^0$ is a starting point of numerical methods.
\end{assumption}

In the theoretical analysis and proofs, it is useful to define the $\psi$--function:
\begin{definition}[$\psi$ and $\psi^{-1}$ functions]
  Let Assumption~\ref{ass:gen_smooth} hold. We define the function $\psi \,:\, \R_+ \to \R_+$ such that $\psi(x) = \frac{x^2}{2 \ell(4 x)},$
  and $\psi^{-1} \,:\, [0, \psi(\Delta_{\max})) \to [0, \Delta_{\max})$ as its (standard) inverse, where $\Delta_{\max} \in (0, \infty]$ is the largest constant such that $\psi$ is strictly increasing on\footnote{$\Delta_{\max} > 0$ due to Lemma~\ref{lemma:func_to_grad_spec_diff}.} $[0, \Delta_{\max})$.
  \label{def:psi_function}
\end{definition}

\section{Subquadratic and Quadratic Growth of $\ell$}
\begin{algorithm}[t]
  \caption{Accelerated Gradient Descent (\algname{AGD}) with $\ell$-Smoothness}
  \label{alg:main}
  \begin{algorithmic}[1]
  \STATE \textbf{Input:} starting point $x^0 \in \mathcal{X},$ function $\ell$ from Assumption~\ref{ass:gen_smooth}, parameters $\delta$ and $\bar{R}$
  \STATE Starting from $x^0,$ run \algname{GD} from \citep[Alg.~1]{tyurin2024toward} until $f(\bar{x}) - f(x^*) \leq \delta / 2,$ \\
  where $\bar{x}$ is the output point of \algname{GD}
  \STATE Init $y^0 = u^0 = \bar{x}$
  \STATE Set $\Gamma_0 = \delta / \bar{R}^2$
  \STATE Set $\gamma = 1 / \left(2 \ell\left(0\right)\right)$
  \FOR{$k = 0, 1, \dots$}
  \STATE $\alpha_k = \sqrt{\gamma \Gamma_k}$
  \STATE $y^{k+1} = \frac{1}{1 + \alpha_k} y^{k} + \frac{\alpha_k}{1 + \alpha_k} u^{k} - \frac{\gamma}{1 + \alpha_k} \nabla f(y^{k})$
  \STATE $u^{k+1} = \textnormal{Proj}_{\bar{\mathcal{X}}} \left(u^k - \frac{\alpha_k}{\Gamma_k} \nabla f(y^{k+1})\right)$ \\ ($\bar{\mathcal{X}}$ is the closure of $\mathcal{X}$)
  \STATE $\Gamma_{k+1} = \Gamma_{k} / (1 + \alpha_k)$
  \ENDFOR
  \end{algorithmic}
\end{algorithm}
\label{sec:subquadratic}
We are ready to present our first result. Consider Algorithm~\ref{alg:main}, which consists of two phases: first, we run (non-accelerated) \algname{GD}, and then we run an accelerated version of \algname{GD}. Later, we will present Algorithm~\ref{alg:main_new}, which avoids the first phase. We first state the convergence rate of Algorithm~\ref{alg:main} and then discuss and explain it in more detail. We begin by stating a standard result from the theory of accelerated methods \citep{nesterov2018lectures,lan2020first,stonyakin2021inexact} concerning auxiliary sequences, which control convergence rates:
\begin{restatable}{theorem}{THEOREMSEQ}
  \label{thm:gamma}
  For any $\Gamma_0 > 0$ and $\gamma \geq 0,$ let $\alpha_k \geq \sqrt{\gamma \Gamma_k}$ and $\Gamma_{k+1} = \Gamma_{k} / (1 + \alpha_k)$ for all $k \geq 0.$ Then, $\textstyle \Gamma_{k+1} \leq \frac{9}{\gamma \left(k + 1 - \bar{k}\right)^2}$
%   \end{align}
  for all $k \geq \bar{k} \eqdef \max\left\{1 + \frac{1}{2} \log_{3/2}\left(\frac{\gamma \Gamma_0}{4}\right), 0\right\}.$
\end{restatable}
The following result provides the convergence rate of Algorithm~\ref{alg:main} for $\ell$ such that $\psi(x) = \frac{x^2}{2 \ell(4 x)}$ is strictly increasing, which holds, for instance, under $(L_0, L_1)$--smoothness. Note that in Section~\ref{sec:super}, we extend the following result to not necessarily increasing $\psi$.
\begin{restatable}{theorem}{THEORESUB}
  \label{thm:convex_increasing}
  Suppose that Assumptions~\ref{ass:gen_smooth} and \ref{ass:convex} hold. Let $\psi \,:\, \R_{+} \to \R_{+}$ such that $\psi(x) = \frac{x^2}{2 \ell(4 x)}$ be strictly increasing. Then Algorithm~\ref{alg:main}
  guarantees that
  \begin{align}
    \label{eq:lGLpMDrQW}
    \textstyle f(y^{k+1}) - f(x^*) \leq \Gamma_{k+1} \bar{R}^2 \leq \frac{18 \ell\left(0\right) \bar{R}^2}{\left(k + 1 - \bar{k}\right)^2}
  \end{align}
  for all $k \geq \bar{k} \eqdef \max\left\{1 + \frac{1}{2} \log_{3/2}\left(\frac{\Gamma_0}{8 \ell\left(0\right)}\right), 0\right\}$ with any $\delta \in (0, \infty]$ such that $\ell\left(8 \sqrt{\delta \ell\left(0\right)}\right) \leq 2 \ell\left(0\right)$ and any $\bar{R} \geq R \eqdef \norm{x^{0} - x^*}.$ 
%   Moreover, for all 
%   \begin{align*}
%     f(y^{k+1}) - f(x^*) \leq \frac{18 \ell\left(0\right) \bar{R}^2}{\left(k + 1 - \bar{k}\right)^2}
%   \end{align*}
%   due to Theorem~\ref{thm:gamma}.
\end{restatable}
The theorem establishes the desired $\nicefrac{\ell\left(0\right) R^2}{k^2}$ convergence rate of accelerated methods. However, the method enters this regime only after running the \algname{GD} method and after the initial $\bar{k}$ steps of the accelerated steps. The main and final result in this section, which captures the total oracle complexity, is presented below.
\begin{restatable}{theorem}{THEORESUBRATE}
  \label{thm:convex_increasing_rate}
  Consider the assumptions and results of Theorem~\ref{thm:convex_increasing}. The oracle complexity (i.e., the number of gradient calls) required to find an $\varepsilon$--solution is
\begin{align}
  \label{eq:LHFPooCjyEV}
  \frac{5 \sqrt{\ell(0)} \bar{R}}{\sqrt{\varepsilon}} + k(\delta),
\end{align}
for all $\delta \geq 0$ such that $\ell\left(8 \sqrt{\delta \ell\left(0\right)}\right) \leq 2 \ell\left(0\right),$
where 
  $k(\delta) \eqdef \max\left\{1 + \frac{1}{2} \log_{3/2}\left(\frac{\delta}{8 \ell(0) \bar{R}^2}\right),\, 0\right\} + k_{\textnormal{\algname{GD}}}(\delta),$
$k_{\textnormal{\algname{GD}}}(\delta)$ is the oracle complexity of \algname{GD} for finding a point $\bar{x}$ such that $f(\bar{x}) - f(x^*) \leq \delta / 2.$ 
\end{restatable}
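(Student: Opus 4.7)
The plan is to combine the two phases of Algorithm~\ref{alg:main} and use the per-iteration bound already established in Theorem~\ref{thm:convex_increasing}. The first phase runs \algname{GD} until $f(\bar{x})-f(x^*) \leq \delta/2$, which by definition costs $k_{\textnormal{\algname{GD}}}(\delta)$ gradient calls. The initialization $y^0 = u^0 = \bar{x}$ together with the choices $\Gamma_0 = \delta/\bar{R}^2$ and $\gamma = 1/(2\ell(0))$ then feeds into the accelerated phase, for which Theorem~\ref{thm:convex_increasing} is directly applicable because the hypothesis $\ell(8\sqrt{\delta\ell(0)}) \leq 2\ell(0)$ is imported verbatim.

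For the accelerated phase, I would invoke the second half of Theorem~\ref{thm:convex_increasing}, which yields
\[
f(y^{k+1}) - f(x^*) \leq \frac{18\,\ell(0)\,\bar{R}^2}{(k+1-\bar{k})^2}
\quad\text{for all } k \geq \bar{k},
\]
where $\bar{k} = \max\{1 + \tfrac{1}{2}\log_{3/2}(\gamma\Gamma_0/4),\,0\}$. Substituting $\gamma\Gamma_0/4 = \delta/(8\ell(0)\bar{R}^2)$ recovers the exact expression for $\bar{k}$ appearing in $k(\delta)$ in the statement. To force the right-hand side below $\varepsilon$, I require $(k+1-\bar{k})^2 \geq 18\,\ell(0)\bar{R}^2/\varepsilon$, i.e., $k + 1 - \bar{k} \geq \sqrt{18}\,\sqrt{\ell(0)}\,\bar{R}/\sqrt{\varepsilon}$; since $\sqrt{18} < 5$, it suffices to perform $\lceil 5\sqrt{\ell(0)}\bar{R}/\sqrt{\varepsilon}\rceil$ accelerated iterations past index $\bar{k}$.

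Summing the three contributions --- the \algname{GD} warm-up of length $k_{\textnormal{\algname{GD}}}(\delta)$, the $\bar{k}$ burn-in iterations required to enter the $1/k^2$ regime guaranteed by Theorem~\ref{thm:gamma}, and the $5\sqrt{\ell(0)}\bar{R}/\sqrt{\varepsilon}$ main accelerated iterations --- gives the total budget in~\eqref{eq:LHFPooCjyEV}. I expect no genuine obstacle: all the heavy lifting (the Lyapunov argument that produces the per-step guarantee~\eqref{eq:lGLpMDrQW} and the recursion $\Gamma_{k+1} \leq 9/(\gamma(k+1-\bar{k})^2)$ from Theorem~\ref{thm:gamma}) is already available. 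The only points needing care are absorbing $\sqrt{18}$ into the constant $5$ (which tolerates the ceiling), and handling the degenerate branch $\bar{k}=0$ when the argument of the logarithm is at most $1$, which is precisely why $k(\delta)$ uses a $\max\{\cdot,\,0\}$.
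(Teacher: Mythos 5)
Your proposal is correct and follows essentially the same route as the paper's proof: run \algname{GD} for $k_{\textnormal{\algname{GD}}}(\delta)$ calls, invoke the $\Gamma_{k+1}\le 18\ell(0)/(k+1-\bar k)^2$ bound from Theorems~\ref{thm:gamma} and~\ref{thm:convex_increasing} with $\gamma\Gamma_0/4=\delta/(8\ell(0)\bar R^2)$, and absorb $\sqrt{18}<5$ into the constant. No gaps.
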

\begin{corollary}
  \label{cor:first}
  In Theorem~\ref{thm:convex_increasing_rate}, minimizing over $\delta$ and taking $\bar{R} = R \eqdef \norm{x^{0} - x^*},$ the oracle complexity is 
  \begin{align}
    \label{eq:GniMjHfKmHVLXLzy}
    \frac{5 \sqrt{\ell(0)} R}{\sqrt{\varepsilon}} + \underbrace{\min_{\delta \geq 0\,:\,\ell\left(8 \sqrt{\delta \ell\left(0\right)}\right) \leq 2 \ell\left(0\right)}k(\delta)}_{\textnormal{does not depend on $\varepsilon$}}.
  \end{align}
\end{corollary}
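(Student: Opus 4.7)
The plan is to invoke Theorem~\ref{thm:convex_increasing_rate} directly: it asserts that, for every feasible $\delta \geq 0$ (i.e.\ every $\delta$ satisfying $\ell(8\sqrt{\delta\,\ell(0)}) \leq 2\ell(0)$) and every $\bar{R} \geq R$, the oracle complexity of Algorithm~\ref{alg:main} to produce an $\varepsilon$--solution is at most $\frac{5\sqrt{\ell(0)}\bar{R}}{\sqrt{\varepsilon}} + k(\delta)$. First I would specialize the bound by setting $\bar{R} = R = \norm{x^0 - x^*}$, which is the tightest choice allowed by the theorem. Then, since $\delta$ is a free parameter of the algorithm chosen by the user, the complexity bound holds for the infimum of the right-hand side over the feasible set of $\delta$, yielding \eqref{eq:GniMjHfKmHVLXLzy}.

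Next I would justify the key claim written under the underbrace, namely that $\min_{\delta} k(\delta)$ does not depend on $\varepsilon$. Inspecting the definition $k(\delta) = \max\{1 + \tfrac{1}{2}\log_{3/2}(\delta/(8\ell(0)\bar{R}^2)), 0\} + k_{\algname{GD}}(\delta)$, neither the logarithmic term nor $k_{\algname{GD}}(\delta)$ involves $\varepsilon$: the first depends only on $\delta$, $\ell(0)$, and $\bar{R}$, while the second is defined as the oracle complexity of \algname{GD} to reach accuracy $\delta/2$, which is a function of the instance $(f, x^0)$ and of $\delta$ alone. Hence the entire map $\delta \mapsto k(\delta)$ is independent of $\varepsilon$, and so is its minimum.

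Finally I would note that the feasible set $\{\delta \geq 0 : \ell(8\sqrt{\delta\,\ell(0)}) \leq 2\ell(0)\}$ is non-empty, since $\delta = 0$ trivially satisfies the constraint $\ell(0) \leq 2\ell(0)$, and by continuity of $\ell$ some neighborhood of $0$ is feasible as well; for any such $\delta > 0$, the quantity $k(\delta)$ is finite, so the minimum in \eqref{eq:GniMjHfKmHVLXLzy} is well-defined and finite.

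There is no genuine obstacle here: the corollary is a direct rewriting of Theorem~\ref{thm:convex_increasing_rate} after (i) fixing $\bar{R} = R$, and (ii) observing that $\delta$ is a tunable parameter whose optimal value depends only on $\ell$, $R$, and the problem instance, but not on the target accuracy $\varepsilon$. The only point that deserves a short sentence in the write-up is the independence of $k(\delta)$ from $\varepsilon$, which isolates the $\varepsilon$-dependence in the leading $\frac{5\sqrt{\ell(0)}R}{\sqrt{\varepsilon}}$ term and yields the advertised near-optimal accelerated rate.
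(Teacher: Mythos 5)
Your proposal is correct and matches the paper's (implicit) argument: the corollary is treated as an immediate consequence of Theorem~\ref{thm:convex_increasing_rate}, obtained exactly as you describe by setting $\bar{R} = R$ and minimizing the bound over the feasible set of $\delta$, with the observation that $k(\delta)$ carries no $\varepsilon$-dependence. Your extra remarks on non-emptiness of the feasible set and finiteness of $k(\delta)$ for small $\delta > 0$ are consistent with the paper and do not change the route.
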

\subsection{Example: $(L_0, L_1)$--smoothness}
\label{sec:example_1}
We now consider an example and apply the result for $(L_0, L_1)$--smooth functions. In this case, $\ell(s) = L_0 + L_1 s.$ First, we need to find the proper set of $\delta$ from Theorem~\ref{thm:convex_increasing}: $\ell(8 \sqrt{\delta \ell\left(0\right)}) \leq 2 \ell(0) \Leftrightarrow L_0 + L_1 (8 \sqrt{\delta L_0}) \leq 2 L_0 \Leftrightarrow  \delta \leq L_0 / (64 L_1^2).$ Second, we need to find $k_{\textnormal{\algname{GD}}}(\delta).$ Using Table~2 from \citep{tyurin2024toward}, or the results by \citet{gorbunov2024methods,vankov2024optimizing}, $k_{\textnormal{\algname{GD}}}(\delta) = \cO\left(\nicefrac{L_0 R^2}{\delta} + \min\left\{\nicefrac{L_1 \Delta^{1 / 2} R}{\delta^{1 / 2}}, L_1^2 R^2, \nicefrac{L_1 \norm{\nabla f(x^0)} R^2}{\delta}\right\}\right) = \cO\left(\frac{L_0 R^2}{\delta}\right) = \cO\left(\frac{L_0 \bar{R}^2}{\delta}\right)$ for all $\delta \leq L_0 / (64 L_1^2).$ Substituting to \eqref{eq:LHFPooCjyEV}, we get the total oracle complexity
\begin{align}
  \label{eq:mkMctLlqZ}
  % \textstyle \cO\left(\frac{\sqrt{L_0} \bar{R}}{\sqrt{\varepsilon}} + \min\limits_{0 \leq \delta \leq L_0 / (64 L_1^2)} \left[\max\left\{\log \left(\frac{\delta}{L_0 \bar{R}^2}\right),\, 0\right\} + \frac{L_0 \bar{R}^2}{\delta}\right]\right),
  \textstyle \cO\left(\frac{\sqrt{L_0} \bar{R}}{\sqrt{\varepsilon}} + \min\limits_{0 \leq \delta \leq L_0 / (64 L_1^2)} \left[\left[\log \left(\frac{\delta}{L_0 \bar{R}^2}\right)\right]_+ + \frac{L_0 \bar{R}^2}{\delta}\right]\right),
\end{align}
where $[x]_+ \equiv \max\{x, 0\}$ for all $x \in \R.$ Taking $\delta = \min\{L_0 / (64 L_1^2), (L_0 \bar{R}^2) / 64\}$ (which might not be the optimal choice, but a sufficient choice to show that the first term dominates if $\varepsilon$ is small), we get
\begin{align}
  \label{eq:iDSJQFqPsREzFtnxi}
  \textstyle \eqref{eq:mkMctLlqZ} = \cO\left(\frac{\sqrt{L_0} \bar{R}}{\sqrt{\varepsilon}} + L_1^2 \bar{R}^2\right) = \cO\left(\frac{\sqrt{L_0} R}{\sqrt{\varepsilon}} + L_1^2 R^2\right),
\end{align} 
where we choose $\bar{R} = R.$ Unlike \citet{li2024convex,gorbunov2024methods,vankov2024optimizing}, we get $\cO(\nicefrac{\sqrt{L_0} R}{\sqrt{\varepsilon}})$ for small $\varepsilon.$ Moreover, at least in the regime $\varepsilon \leq L_0 / (L_1^4 R^2),$ this complexity is optimal \citep{nemirovskij1983problem,nesterov2018lectures} since for any $L_0 > 0,$ and $L_1 \geq 0,$ it is possible to find an $(L_0, L_1)$--smooth function (the $(L_0, 0)$--smooth function from Section~2.1.2 of \citep{nesterov2018lectures}) such that the required number of oracle calls is $\Omega(\nicefrac{\sqrt{L_0} R}{\sqrt{\varepsilon}}).$ In Section~\ref{sec:spec}, we provide a better dependence on the non-dominant term.

One can repeat these steps for any $\ell$ such that $\psi$ is strictly increasing. Nevertheless, even without these derivations, we establish the total oracle complexity $\cO(\nicefrac{\sqrt{\ell(0)} R}{\sqrt{\varepsilon}})$ in \eqref{eq:GniMjHfKmHVLXLzy} for small $\varepsilon$.

\subsection{Discussion}
The closest work to the complexity $\cO\left(\nicefrac{\sqrt{L_0} R}{\sqrt{\varepsilon}}\right),$ when $\varepsilon$ is small, is \citep{vankov2024optimizing}. Using the same idea as in \citep{vankov2024optimizing}, in Algorithm~\ref{alg:main}, we run \algname{GD} until $f(\bar{x}) - f(x^*) \leq \nicefrac{\delta}{2}$. However, the next steps and proof techniques are new. Using the ``warm-start'' point $\bar{x},$ it becomes easier for Algorithm~\ref{alg:main} to run accelerated steps because we take $\delta$ such that $\ell(4 \norm{\nabla f(y^0)}) \leq 2 \ell(0)$ (Lemma~\ref{lemma:bound_sub}), meaning that we start from the region where the local smoothness constant is almost $\ell(0).$ The main challenge is to ensure that the next points $y^k$ of Algorithm~\ref{alg:main} never leave this region. To ensure that, using the method from \citep{nesterov2021primal}, \citet{vankov2024optimizing} utilize the monotonicity of their accelerated method and the fact that their points do not leave the region with small smoothness. However, this improvement is not free and requires solving an auxiliary problem and $\nu$ extra oracle calls at each iteration, where $\nu$ is not a universal constant and depends on the parameters of $f$, leading to a suboptimal complexity.

In contrast, our method follows the standard approach, where only one gradient is computed per iteration. We use the version of the accelerated method from \citep{weiaccelerated}[Section D.2], with some minor but important modifications. The method itself is very similar to the one from \citep{allen2014linear}, for instance. However, the proof technique is very different, which is the main reason we focus on Algorithm~\ref{alg:main}. While for $L$--smooth functions, up to constant factors, the proof technique from \citep{weiaccelerated} does not offer any advantages over \citep{nesterov1983method} because the result in \citep{nesterov1983method} is optimal. In the case of functions with generalized smoothness, it becomes particularly useful, as shown in the following section.
\subsection{Proof sketch}
\label{sec:proof_sketch}
As in most proofs, we define the Lyapunov function 
$V_{k} \eqdef f(y^{k}) - f(x^*) + \frac{\Gamma_{k}}{2} \norm{u^{k} - x^*}^2.$ 
The first important observation is that in $V_k$ we use $y^k$, the point where the gradient is actually computed. 
% This stands in contrast to all known proofs we are aware of, which typically use a different point, one where the gradient is not evaluated. 
This is important, and we will see why later.

Using mathematical induction, let us assume that we have run Algorithm~\ref{alg:main} up to $k$\textsuperscript{th} iteration, $\ell\left(4 \norm{\nabla f(y^k)}\right) \leq 2 \ell(0),$ and $V_{k} \leq \left(\prod_{i=0}^{k-1} \frac{1}{1 + \alpha_i}\right) V_0.$ We choose $\Gamma_0$ such that $V_0 \leq \delta.$ The base case with $k = 0$ is true because we run \algname{GD} until $\ell(4 \norm{\nabla f(y^0)}) \leq 2 \ell(0).$ Now, instead of $k + 1$\textsuperscript{th} consider the steps 
\begin{equation}
  \label{eq:hzUSJDK}
\begin{aligned}
  \alpha_{k,\gamma} &= \sqrt{\gamma \Gamma_k}, \\
  y^{k+1}_{\gamma} &= \frac{1}{1 + \alpha_{k,\gamma}} y^{k} + \frac{\alpha_{k,\gamma}}{1 + \alpha_{k,\gamma}} u^{k} - \frac{\gamma}{1 + \alpha_{k,\gamma}} \nabla f(y^{k}), \\
  u^{k+1}_{\gamma} &= \textnormal{Proj}_{\bar{\mathcal{X}}} \left(u^k - \frac{\alpha_{k,\gamma}}{\Gamma_k} \nabla f(y^{k+1}_{\gamma})\right), \\
  \Gamma_{k+1, \gamma} &= \Gamma_{k} / (1 + \alpha_{k,\gamma}),
\end{aligned}
\end{equation}
where $\gamma$ is a free parameter. These steps are equivalent to $k + 1$\textsuperscript{th} iteration when $\gamma = 1 / \left(2 \ell\left(0\right)\right).$ However, we have not proved that we are allowed to use this $\gamma$ yet. For these steps, we can prove a standard descent lemma, Lemma~\ref{thm:main_lemma}:
% \begin{align}
%   \label{eq:hAnZdfCzZSdqlHD}
%   &\textstyle \left[(1 + \alpha_{k,\gamma})(f(y^{k+1}_{\gamma}) - f(x^*)) + \frac{(1 + \alpha_{k,\gamma}) \Gamma_{k+1, \gamma}}{2} \norm{u^{k+1}_{\gamma} - x^*}^2\right] - V_k \nonumber \\
%   &\textstyle \leq \frac{1}{2} \left(\gamma - \frac{1}{\ell(2 \norm{\nabla f(y^{k})} + \norm{\nabla f(y^{k + 1}_{\gamma})})}\right) \norm{\nabla f(y^{k+1}_{\gamma}) - \nabla f(y^{k})}^2.
% \end{align}
\begin{align}
  \label{eq:hAnZdfCzZSdqlHD}
  &\textstyle (1 + \alpha_{k,\gamma}) V_{k+1,\gamma} \leq V_k \nonumber \\
  &\textstyle \quad + \frac{1}{2} \Big[\left(\gamma - \frac{1}{\ell(2 \|\nabla f(y^{k})\| + \|\nabla f(y^{k + 1}_{\gamma})\|)}\right) \nonumber \\
  &\textstyle \quad \qquad \times \norm{\nabla f(y^{k+1}_{\gamma}) - \nabla f(y^{k})}^2\Big],
\end{align}
where
\begin{align*}
  &\textstyle V_{k+1,\gamma} \eqdef f(y^{k+1}_{\gamma}) - f(x^*) + \frac{\Gamma_{k+1, \gamma}}{2} \norm{u^{k+1}_{\gamma} - x^*}^2.
\end{align*}
For now, let us assume that $f$ is $L$--smooth. Then the rest of the proof becomes straightforward. In this case, $\ell(2 \norm{\nabla f(y^{k})} + \norm{\nabla f(y^{k + 1}_{\gamma})}) = L,$ and we can take $\gamma = 1 / 2 L \equiv 1 / (2 \ell(0))$ to ensure that 
$(1 + \alpha_k) V_{k+1} \leq V_k$ because $(1 + \alpha_{k,\gamma}) V_{k+1,\gamma} = (1 + \alpha_k) V_{k+1}$ with $\gamma = 1 / 2 L.$ Then, we should unroll the recursion and use Theorem~\ref{thm:gamma} to get the classical $1 / k^2$ rate \citep{nesterov1983method}.

However, under Assumption~\ref{ass:gen_smooth}, $\ell(2 \norm{\nabla f(y^{k})} + \norm{\nabla f(y^{k + 1}_{\gamma})})$ depends on $\norm{\nabla f(y^{k + 1}_{\gamma})}$, and we encounter a ``chicken-and-egg'' dilemma: in order to choose $\gamma$, we need to know $\norm{\nabla f(y^{k + 1}_{\gamma})}$, which in turn depends on $\gamma.$ Our resolution is the following. Let us take (non-explicitly) the smallest $\gamma^* \geq 0$ such that 
\begin{align*}
  g(\gamma) \eqdef \gamma - \frac{1}{\ell(2 \norm{\nabla f(y^{k})} + \norm{\nabla f(y^{k + 1}_{\gamma})})} = 0,
\end{align*}
which exists and is positive because $g(\gamma)$ is continuous, $g(0) < 0,$ and $g(\bar{\gamma}) \geq 0$ for $\bar{\gamma} = \frac{1}{\ell(2 \norm{\nabla f(y^{k})})}.$ It is possible that we are ``unlucky'' and $\gamma^*$ is very small, leading to a slow convergence rate and preventing us from choosing $\gamma = 1 / (2 \ell(0)).$ Surprisingly, it is possible to show that $\gamma^* \geq 1 / (2 \ell(0)).$ Indeed, using \eqref{eq:hAnZdfCzZSdqlHD}, for all $\gamma \leq \gamma^*,$ we have $f(y^{k+1}_{\gamma}) - f(x^*) \leq V_k \leq V_0.$ Recall that we choose $\Gamma_0$ such that $V_0 \leq \delta.$ Thus, $f(y^{k+1}_{\gamma}) - f(x^*) \leq \delta.$ This is the key inequality in the proof, which allows us to conclude that the function gap with $y^{k+1}_{\gamma}$ is bounded, thus justifying the choice of the Lyapunov function.

It left to use Lemma~\ref{lemma:bound_sub}, which allows us to bound $\ell(4 \norm{\nabla f(y)})$ if we can bound $f(y) - f(x^*) \leq \delta$ for all $y \in \mathcal{X}.$ Thus, $\ell\left(4 \norm{\nabla f(y^{k+1}_{\gamma})}\right) \leq 2 \ell(0)$ for all $\gamma \leq \gamma^*.$ Recalling the definition of $\gamma^*:$
\begin{align*}
  \textstyle \gamma^* &= \textstyle  \frac{1}{\ell(2 \norm{\nabla f(y^{k})} + \norm{\nabla f(y^{k + 1}_{\gamma^*})})} \\
  & \textstyle \geq \frac{1}{\max\{\ell(4 \norm{\nabla f(y^{k})}), \ell(4 \norm{\nabla f(y^{k + 1}_{\gamma^*})})\}} \geq \frac{1}{2 \ell(0)}.
\end{align*}
Finally, this means that we can take $\gamma = 1 / (2 \ell(0)),$ \eqref{eq:hzUSJDK} reduces to the $k + 1$\textsuperscript{th} step of Algorithm~\ref{alg:main}, $\ell\left(4 \norm{\nabla f(y^{k+1})}\right) \leq 2 \ell(0),$ and $V_{k + 1} \leq \left(\prod_{i=0}^{k} \frac{1}{1 + \alpha_i}\right) V_0$ due to \eqref{eq:hAnZdfCzZSdqlHD}. We have proved the next step of mathematical induction and \eqref{eq:lGLpMDrQW}.

The way we resolve the ``chicken-and-egg'' dilemma can be an interesting proof trick in other optimization contexts. Note that our method is not necessarily monotonic, but the proof still allows us to show that the method never leaves the region where the local smoothness constant is almost $\ell(0).$

\newcommand{\sectionname}{Stability with Respect to Input Parameters and Improved Rates}
\section{\sectionname}
\label{sec:semistability}
While, to the best of our knowledge, Algorithm~\ref{alg:main} is the first algorithm with $\cO\big(\nicefrac{\sqrt{\ell(0)} R}{\sqrt{\varepsilon}}\big)$ complexity, it has two limitations: it runs \algname{GD} at the beginning, and it requires a good estimate of $R$ when selecting $\bar{R}.$ We resolve these issues in Algorithm~\ref{alg:main_new}, which is similar to Algorithm~\ref{alg:main}, but the former does not run \algname{GD} at the beginning, uses the step sizes $\gamma_k = 1 / \ell\left(4 \psi^{-1}\left(\Gamma_k \bar{R}^2 \right)\right),$ and requires $\Gamma_0$ as an input.
\begin{restatable}{theorem}{THEORESUBBETTER}
  \label{thm:convex_increasing_better}
  Suppose that Assumptions~\ref{ass:gen_smooth} and \ref{ass:convex} hold. Let $\psi \,:\, \R_{+} \to \R_{+}$ such that $\psi(x) = \frac{x^2}{2 \ell(4 x)}$ be strictly increasing and $\lim\limits_{x \to \infty} \psi(x) = \infty.$ Then Algorithm~\ref{alg:main_new}
  guarantees that
  \begin{align*}
    f(y^{k+1}) - f(x^*) \leq \Gamma_{k+1} R^2
  \end{align*}
  for all $k \geq 0$ with $\Gamma_{0} \geq \frac{2 (f(x^{0}) - f(x^*))}{\norm{x^{0} - x^*}^2}$ and $\bar{R} \geq R.$
\end{restatable}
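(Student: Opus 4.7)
The plan is to mirror the inductive Lyapunov argument of Theorem~\ref{thm:convex_increasing} but to drop the \algname{GD} warm-start and replace it by a direct Lyapunov certificate obtained from the choice of $\Gamma_0$, with the step size now tracking the decreasing sequence $\Gamma_k\bar{R}^2$ rather than a fixed smoothness constant. Working with the Lyapunov function $V_k \eqdef f(y^k)-f(x^*)+\frac{\Gamma_k}{2}\norm{u^k-x^*}^2$, the target is to prove by induction on $k$ that $V_k \leq \Gamma_k R^2$; since $\Gamma_{k+1}=\Gamma_k/(1+\alpha_k)$ and the second term in $V_{k+1}$ is nonnegative, this immediately yields $f(y^{k+1})-f(x^*)\leq \Gamma_{k+1}R^2$. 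The base case is handled by the initialization $y^0=u^0=x^0$, which gives $V_0 = (f(x^0)-f(x^*)) + \frac{\Gamma_0}{2}R^2$, and the hypothesis $\Gamma_0 \geq 2(f(x^0)-f(x^*))/R^2$ is exactly what is needed to conclude $V_0 \leq \Gamma_0 R^2$.

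For the inductive step, assume $V_k \leq \Gamma_k R^2 \leq \Gamma_k \bar{R}^2$. I would apply the one-step descent estimate Lemma~\ref{thm:main_lemma} (used in the proof sketch of Theorem~\ref{thm:convex_increasing}) to the parameterized family $(y^{k+1}_\gamma, u^{k+1}_\gamma)$ from \eqref{eq:hzUSJDK}, obtaining
\[
(1+\alpha_{k,\gamma}) V_{k+1,\gamma} - V_k \leq \tfrac{1}{2}\Bigl(\gamma - \tfrac{1}{\ell(2\norm{\nabla f(y^k)}+\norm{\nabla f(y^{k+1}_\gamma)})}\Bigr)\norm{\nabla f(y^{k+1}_\gamma)-\nabla f(y^k)}^2.
\]
It therefore suffices to show that the bracket is nonpositive at $\gamma = \gamma_k := 1/\ell\bigl(4\psi^{-1}(\Gamma_k\bar{R}^2)\bigr)$, i.e.\ that $2\norm{\nabla f(y^k)} + \norm{\nabla f(y^{k+1}_{\gamma_k})} \leq 4\psi^{-1}(\Gamma_k\bar{R}^2)$.

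The main obstacle, as already foreshadowed in the proof sketch of Theorem~\ref{thm:convex_increasing}, is the ``chicken-and-egg'' entanglement between $\gamma_k$ and $y^{k+1}_{\gamma_k}$, and my plan is to resolve it by the same continuity/fixed-point argument: set $g(\gamma) \eqdef \gamma - 1/\ell\bigl(2\norm{\nabla f(y^k)}+\norm{\nabla f(y^{k+1}_\gamma)}\bigr)$ and let $\gamma^*$ be its smallest nonnegative root, which exists because $g$ is continuous with $g(0)<0$ and $g(\bar\gamma)\geq 0$ for $\bar\gamma = 1/\ell(2\norm{\nabla f(y^k)})$. For every $\gamma \in [0,\gamma^*]$ the bracket is nonpositive, so the displayed estimate forces $f(y^{k+1}_\gamma)-f(x^*) \leq V_k \leq \Gamma_k \bar{R}^2$. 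Invoking Lemma~\ref{lemma:bound_sub}, which via the inequality $\psi(\norm{\nabla f(y)}) \leq f(y)-f(x^*)$ turns a function-gap bound into a gradient bound, yields $\norm{\nabla f(y^{k+1}_\gamma)} \leq \psi^{-1}(\Gamma_k\bar{R}^2)$ for every such $\gamma$; the same lemma applied at $y^k$ gives $\norm{\nabla f(y^k)} \leq \psi^{-1}(\Gamma_k\bar{R}^2)$. Plugging both bounds into the defining identity $\gamma^* = 1/\ell\bigl(2\norm{\nabla f(y^k)}+\norm{\nabla f(y^{k+1}_{\gamma^*})}\bigr)$ and using monotonicity of $\ell$ yields $\gamma^* \geq 1/\ell(3\psi^{-1}(\Gamma_k\bar{R}^2)) \geq 1/\ell(4\psi^{-1}(\Gamma_k\bar{R}^2)) = \gamma_k$, so $\gamma_k \in [0,\gamma^*]$ and $g(\gamma_k)\leq 0$, as required. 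The hypotheses that $\psi$ is strictly increasing and $\lim_{x\to\infty}\psi(x)=\infty$ are used precisely here, to guarantee $\Delta_{\max}=\infty$ in Definition~\ref{def:psi_function} and hence that $\psi^{-1}(\Gamma_k\bar{R}^2)$ is well-defined at every $k$ and every admissible $\Gamma_0\bar{R}^2$.

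Combining, the descent estimate at $\gamma=\gamma_k$ gives $(1+\alpha_k)V_{k+1} \leq V_k \leq \Gamma_k R^2 = (1+\alpha_k)\Gamma_{k+1} R^2$, so $V_{k+1} \leq \Gamma_{k+1} R^2$ and the induction closes. The proof is essentially the strategy of Theorem~\ref{thm:convex_increasing}, but with the small-gradient region that \algname{GD} produced there supplied instead by the initial inequality $\Gamma_0 \geq 2(f(x^0)-f(x^*))/R^2$, and with the adaptive step size $\gamma_k = 1/\ell(4\psi^{-1}(\Gamma_k\bar{R}^2))$ chosen so that the admissibility threshold $\psi^{-1}(\Gamma_k\bar{R}^2)$ shrinks as $\Gamma_k\to 0$, allowing increasingly aggressive steps as the method enters the accelerated regime.
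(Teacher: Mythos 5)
Your proposal is correct and takes essentially the same route as the paper's proof: the same Lyapunov function, the same descent lemma (Lemma~\ref{thm:main_lemma}), the same smallest-root resolution of the ``chicken-and-egg'' issue via $\gamma^*$, and the same conversion of the function-gap bound into a gradient bound via $\psi$ — though the inequality $\psi(\norm{\nabla f(y)})\le f(y)-f(x^*)$ you invoke is Lemma~\ref{lemma:func_to_grad_spec}, not Lemma~\ref{lemma:bound_sub}. The only cosmetic deviations are that you carry $V_k\le\Gamma_k R^2$ directly in the induction (re-deriving the gradient bound at $y^k$ each step) instead of the paper's pair of hypotheses $V_k\le(\Gamma_k/\Gamma_0)V_0$ and $\norm{\nabla f(y^k)}\le\psi^{-1}(\Gamma_k\bar{R}^2)$, and you bound $2\norm{\nabla f(y^k)}+\norm{\nabla f(y^{k+1}_{\gamma^*})}\le 3\psi^{-1}(\Gamma_k\bar{R}^2)\le 4\psi^{-1}(\Gamma_k\bar{R}^2)$ rather than using the paper's max-based estimate.
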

\begin{algorithm}[t]
  \caption{\algname{AGD} with $\ell$-smoothness and increasing step sizes (without \algname{GD} pre-running)}
  \label{alg:main_new}
  \begin{algorithmic}[1]
  \STATE \textbf{Input:} starting point $x^0 \in \mathcal{X},$ function $\ell$ from Assumption~\ref{ass:gen_smooth}, parameters $\Gamma_0$ and $\bar{R}$
  \STATE Init $y^0 = u^0 = x^0$
  \STATE Define $\psi(x) = \frac{x^2}{2 \ell(4 x)}$ \hfill (assume that $\psi$ is invertible on $\R_+$; Algorithm~\ref{alg:main} does not require this)
  \FOR{$k = 0, 1, \dots$}
  \STATE $\gamma_k = 1 / \ell\left(4 \psi^{-1}\left(\Gamma_k \bar{R}^2 \right)\right)$
  \STATE $\alpha_k = \sqrt{\gamma_k \Gamma_k}$
  \STATE $y^{k+1} = \frac{1}{1 + \alpha_k} y^{k} + \frac{\alpha_k}{1 + \alpha_k} u^{k} - \frac{\gamma_k}{1 + \alpha_k} \nabla f(y^{k})$
  \STATE $u^{k+1} = \textnormal{Proj}_{\bar{\mathcal{X}}} \left(u^k - \frac{\alpha_k}{\Gamma_k} \nabla f(y^{k+1})\right)$ \\ ($\bar{\mathcal{X}}$ is the closure of $\mathcal{X}$)
  \STATE $\Gamma_{k+1} = \Gamma_{k} / (1 + \alpha_k)$
  \ENDFOR
  \end{algorithmic}
\end{algorithm}
\begin{restatable}{theorem}{THEORESUBBETTERRATE}
  \label{thm:convex_increasing_better_rate}
  Consider the assumptions and results of Theorem~\ref{thm:convex_increasing_better}. The oracle complexity (i.e., the number of gradient calls) required to find an $\varepsilon$--solution is
\begin{align}
  \label{eq:JXpxPDWSQrnCDAPLJgH}
  \frac{5 \sqrt{\ell(0)} R}{\sqrt{\varepsilon}} + \underbrace{\max\left\{2 + \log_{3/2}\left(\frac{\Gamma_{0}}{4 \ell(0)}\right), 0\right\} + k_{\textnormal{init}}}_{\textnormal{does not depend on $\varepsilon$}}
\end{align}
with $\Gamma_0 \geq \frac{2 (f(x^0) - f(x^*))}{\norm{x^0 - x^*}^2},$ $\bar{R} \geq R,$ and $k_{\textnormal{init}}$ being the smallest integer such that 
  $$\textstyle \ell\left(24 \sqrt{\frac{\ell\left(4 \psi^{-1}\left(\Gamma_0 \bar{R}^2 \right)\right) \ell\left(0\right) \bar{R}^2}{k_{\textnormal{init}}^2} }\right) \leq 2 \ell\left(0\right).$$
\end{restatable}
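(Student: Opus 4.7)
The plan is to combine Theorem~\ref{thm:convex_increasing_better} with a two-phase analysis of the scalar recursion $\Gamma_{k+1} = \Gamma_k/(1 + \alpha_k)$, $\alpha_k = \sqrt{\gamma_k \Gamma_k}$, exploiting the fact that $\gamma_k$ is adaptive. Since Theorem~\ref{thm:convex_increasing_better} already yields $f(y^{k+1}) - f(x^*) \leq \Gamma_{k+1} R^2$, the oracle complexity reduces to counting iterations until $\Gamma_{k+1} \leq \varepsilon/R^2$. The crucial structural observation is that $\Gamma_k$ is non-increasing, and both $\psi^{-1}$ and $\ell$ are non-decreasing, so $\gamma_k = 1/\ell(4\psi^{-1}(\Gamma_k \bar{R}^2))$ is non-decreasing in $k$. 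In particular, $\gamma_k \geq 1/M_0$ throughout the run, where $M_0 := \ell(4\psi^{-1}(\Gamma_0 \bar{R}^2))$, and $\gamma_k \to 1/\ell(0)$ as $\Gamma_k \to 0$.

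For the first (``warm-up'') phase, I invoke Theorem~\ref{thm:gamma} with $\gamma = 1/M_0$, using $\alpha_k \geq \sqrt{\Gamma_k/M_0}$, to obtain $\Gamma_{k+1} \leq 9 M_0/(k+1-\bar{k}_0)^2$ for $k \geq \bar{k}_0 := \max\{1 + \tfrac{1}{2}\log_{3/2}(\Gamma_0/(4 M_0)),\,0\}$. Next, I identify the ``good regime'' $\{\Gamma : \ell(4\psi^{-1}(\Gamma \bar{R}^2)) \leq 2\ell(0)\}$, on which $\gamma_k \geq 1/(2\ell(0))$. Letting $x^* := \inf\{x \geq 0 : \ell(x) \geq 2\ell(0)\}$, the definition of $\psi$ gives $\psi(x^*/4) = (x^*)^2/(64\ell(0))$, so the good regime corresponds to $\Gamma \bar{R}^2 \leq \psi(x^*/4)$, i.e., $\Gamma \leq \Gamma^* := (x^*)^2/(64 \ell(0) \bar{R}^2)$. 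Inverting the warm-up bound, the warm-up phase drives $\Gamma_k$ below $\Gamma^*$ once $k - \bar{k}_0 \geq 3\sqrt{M_0/\Gamma^*} = 24\sqrt{M_0\,\ell(0)}\,\bar{R}/x^*$. The condition in the theorem statement, $\ell(24\sqrt{M_0\,\ell(0)\,\bar{R}^2/k_{\text{init}}^2}) \leq 2\ell(0)$, is precisely $24\sqrt{M_0 \ell(0)}\bar{R}/k_{\text{init}} \leq x^*$, so this is the exact integer threshold at which phase one is guaranteed to have entered the good regime.

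For the second (``accelerated'') phase, for all $k \geq K := \bar{k}_0 + k_{\text{init}}$ one has $\gamma_k \geq 1/(2\ell(0))$. I re-apply Theorem~\ref{thm:gamma} treating iteration $K$ as the new starting index with $\gamma = 1/(2\ell(0))$ and initial value $\Gamma_K \leq \Gamma^*$, obtaining $\Gamma_{k+1} \leq 18\ell(0)/(k+1-K-\bar{k}_{\text{new}})^2$ for $k \geq K + \bar{k}_{\text{new}}$, where $\bar{k}_{\text{new}} = \max\{1 + \tfrac{1}{2}\log_{3/2}(\Gamma_K/(8\ell(0))),\,0\}$. Requiring this bound to be at most $\varepsilon/R^2$ yields $k+1-K-\bar{k}_{\text{new}} \geq \sqrt{18\ell(0)}\,R/\sqrt{\varepsilon} \leq 5\sqrt{\ell(0)}\,R/\sqrt{\varepsilon}$.

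Finally, I consolidate the offsets: since $M_0 \geq \ell(0)$ and $\Gamma_K \leq \Gamma_0$, one has $\bar{k}_0 + \bar{k}_{\text{new}} \leq 2 + \tfrac{1}{2}\log_{3/2}(\Gamma_0^2 / (32\,\ell(0)^2)) \leq \max\{2 + \log_{3/2}(\Gamma_0/(4\ell(0))),\,0\}$, which combines the two log terms into the single expression appearing in \eqref{eq:JXpxPDWSQrnCDAPLJgH}. Summing the three contributions $(\bar{k}_0 + \bar{k}_{\text{new}}) + k_{\text{init}} + 5\sqrt{\ell(0)}R/\sqrt{\varepsilon}$ gives the stated bound. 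The main obstacle I anticipate is the bookkeeping around the transition: rigorously verifying that the implicit threshold defined by the $k_{\text{init}}$ condition coincides with what phase one guarantees (which needs the exact identity $3\sqrt{M_0/\Gamma^*} = 24\sqrt{M_0\ell(0)}\bar{R}/x^*$), and cleanly merging the two logarithmic offsets without losing multiplicative constants in front of the dominant $\sqrt{\ell(0)}R/\sqrt{\varepsilon}$ term.
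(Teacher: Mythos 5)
Your proposal is correct and follows essentially the same two-phase argument as the paper: a first application of Theorem~\ref{thm:gamma} with the worst-case step size $\gamma_0 = 1/\ell\left(4\psi^{-1}(\Gamma_0\bar{R}^2)\right)$ until $\gamma_k \geq 1/(2\ell(0))$ (which is exactly what the $k_{\textnormal{init}}$ condition encodes), followed by a second application with $\gamma = 1/(2\ell(0))$ and the same merging of the two logarithmic offsets via $\Gamma_K \leq \Gamma_0$ and $\ell\left(4\psi^{-1}(\Gamma_0\bar{R}^2)\right) \geq \ell(0)$. Your explicit threshold $x^* = \inf\{x : \ell(x) \geq 2\ell(0)\}$ is just a reparametrization of the equivalence $\ell\left(4\psi^{-1}(\delta)\right) \leq 2\ell(0) \Leftrightarrow \ell\left(8\sqrt{\delta\ell(0)}\right) \leq 2\ell(0)$ that the paper takes from Lemma~\ref{lemma:bound_sub}.
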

Comparing \eqref{eq:JXpxPDWSQrnCDAPLJgH} and \eqref{eq:mkMctLlqZ}, one can see that the first term in \eqref{eq:JXpxPDWSQrnCDAPLJgH} does not depend on to the choice of $\bar{R}$ and $\Gamma_0.$ Ideally, it is better to choose $\Gamma_{0} = \frac{2 (f(x^{0}) - f(x^*))}{\norm{x^{0} - x^*}^2}$ and $\bar{R} = R.$ However, if we overestimate $\bar{R}$ and $\Gamma_0,$ the penalty for this appears in the term that does not depend on $\varepsilon.$ In the next section, we consider an example to illustrate this.
\subsection{Example: $(L_0, L_1)$--smoothness}
\label{sec:example_2}
To find the oracle complexity, we have to estimate $k_{\textnormal{init}}.$ In the case of $(L_0, L_1)$--smoothness, we can find $k_{\textnormal{init}}$ from the equality 
$L_0 + L_1 \sqrt{(L_0 + L_1 \psi^{-1}\left(\Gamma_0 \bar{R}^2 \right)) L_0 \bar{R}^2 / k_{\textnormal{init}}^2 } \simeq 2 L_0$ 
(we ignore constants for simplicity), where $\psi^{-1}$ is the inverse of $x^2 / (2 (L_0 + 4 L_1 x)).$ If $\Gamma_0 \bar{R}^2 \geq L_0 / L_1,$ then the equality is equivalent to $k_{\textnormal{init}} \simeq \sqrt{L_1^2 \bar{R}^2 + L_1^4 \Gamma_0 \bar{R}^4 / L_0}.$ Otherwise, $k_{\textnormal{init}} \simeq \sqrt{L_1^2 \bar{R}^2 + L_1^3 \bar{R}^3 \sqrt{\Gamma_0 / L_0}}.$ Thus, using \eqref{eq:JXpxPDWSQrnCDAPLJgH}, the total oracle complexity is
\begin{align}
  \label{eq:UPABGqcdwwz}
  \textstyle \cO\left(\frac{\sqrt{L_0} R}{\sqrt{\varepsilon}} + L_1 \bar{R} + L_1^2 \bar{R}^2 \sqrt{\frac{\Gamma_0}{L_0}} + \left[\log\left(\frac{\Gamma_{0}}{L_0}\right)\right]_+\right),
\end{align}
where the first term is stable to the choice of $\bar{R}$ and $\Gamma_0.$

\subsection{Specialization for $(L_0, L_1)$--smoothness}
\label{sec:spec}
The previous theorems work with any $\ell$ such that $\psi(x) = \frac{x^2}{2 \ell(4 x)}$ is strictly increasing on $\R_+$ and $\lim\limits_{x \to \infty} \psi(x) = \infty.$ It turns out that we can improve \eqref{eq:UPABGqcdwwz} and refine Theorem~\ref{thm:convex_increasing_better_rate} in the case of $(L_0, L_1)$--smoothness.
\begin{restatable}{theorem}{THEORESUBBETTERRATEREFINE}
  \label{thm:convex_increasing_better_rate_refine}
  Consider the assumptions and results of Theorem~\ref{thm:convex_increasing_better} with $\ell(s) = L_0 + L_1 s.$ The oracle complexity (i.e., the number of gradient calls) required to find an $\varepsilon$--solution is
\begin{equation}
\label{eq:jKlCMcGdKddup}
\begin{aligned}
  \textstyle \cO\Big(& \textstyle \frac{\sqrt{L_0} R}{\sqrt{\varepsilon}} + \max\left\{L_1 \bar{R}\log\left(\min \left\{\frac{L_1^2 \bar{R}^2 \Gamma_0}{L_0}, \frac{\Gamma_0 R^2}{\varepsilon}\right\}\right), 0\right\} \\
  &\textstyle \qquad + \max\left\{\log\left(\frac{\Gamma_{0}}{L_0}\right), 0\right\}\Big)
\end{aligned}
\end{equation}
with $\Gamma_0 \geq \frac{2 (f(x^0) - f(x^*))}{\norm{x^0 - x^*}^2}$ and $\bar{R} \geq R.$
\end{restatable}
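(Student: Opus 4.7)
The plan is to keep the Lyapunov guarantee of Theorem~\ref{thm:convex_increasing_better}, which yields $f(y^{k+1}) - f(x^*) \leq \Gamma_{k+1} R^2$, so an $\varepsilon$--solution is reached as soon as $\Gamma_{k+1} \leq \varepsilon / R^2$. What remains is to estimate how many iterations the recurrence $\Gamma_{k+1} = \Gamma_k / (1 + \alpha_k)$ with $\alpha_k = \sqrt{\gamma_k \Gamma_k}$ and $\gamma_k = 1 / \ell(4 \psi^{-1}(\Gamma_k \bar{R}^2))$ needs, using the explicit form of $\psi$ under $(L_0, L_1)$--smoothness. For $\ell(s) = L_0 + L_1 s$ one has $\psi(x) = x^2 / (2(L_0 + 4 L_1 x))$; elementary asymptotics then give $\psi^{-1}(y) \simeq \sqrt{L_0 y}$ when $y \lesssim L_0 / L_1^2$ and $\psi^{-1}(y) \simeq L_1 y$ when $y \gtrsim L_0 / L_1^2$, so that $\ell(4 \psi^{-1}(y)) \simeq L_0$ in the first regime and $\ell(4 \psi^{-1}(y)) \simeq L_1^2 y$ in the second. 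This dichotomy drives the entire proof.

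I would then split the trajectory into two contiguous phases according to whether $\Gamma_k \bar{R}^2 \gtrsim L_0 / L_1^2$ (the \emph{large-gradient phase}) or $\Gamma_k \bar{R}^2 \lesssim L_0 / L_1^2$ (the \emph{small-gradient phase}). In the large-gradient phase the estimate above yields $\gamma_k \simeq 1 / (L_1^2 \Gamma_k \bar{R}^2)$, so that $\alpha_k = \sqrt{\gamma_k \Gamma_k} \simeq 1 / (L_1 \bar{R})$ is essentially independent of $k$. The recursion $\Gamma_{k+1} = \Gamma_k / (1 + \alpha_k)$ therefore produces geometric decay at rate $(1 + c / (L_1 \bar{R}))^{-1}$, and $\cO(L_1 \bar{R} \log(\Gamma_0 / \Gamma^\star))$ iterations suffice to push $\Gamma_k$ from $\Gamma_0$ down to $\Gamma^\star \eqdef \max\{L_0 / (L_1^2 \bar{R}^2),\, \varepsilon / R^2\}$---either we have already reached the target accuracy, or we have just crossed the regime boundary. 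Unfolding $\Gamma_0 / \Gamma^\star = \min\{L_1^2 \bar{R}^2 \Gamma_0 / L_0,\, \Gamma_0 R^2 / \varepsilon\}$ recovers the $L_1 \bar{R} \log(\min\{\cdot, \cdot\})$ term in~\eqref{eq:jKlCMcGdKddup}, and the outer $\max\{\cdot, 0\}$ accommodates the degenerate case $\Gamma_0 \bar{R}^2 \leq L_0 / L_1^2$ in which the large-gradient phase is empty.

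If the large-gradient phase did not already terminate by hitting the accuracy target, the algorithm enters the small-gradient phase at some index $k_A$ with $\Gamma_{k_A} \lesssim L_0 / (L_1^2 \bar{R}^2)$ and, from then on, with $\gamma_k \geq 1 / (2 L_0)$ uniformly. Applying Theorem~\ref{thm:gamma} to the tail sequence starting at $k_A$ with effective lower bound $\gamma = 1 / (2 L_0)$ gives $\Gamma_k \lesssim L_0 / (k - k_A - \bar{k})^2$ after a burn-in of $\bar{k} = \cO(\max\{\log(\Gamma_{k_A} / L_0),\, 0\}) \leq \cO(\max\{\log(\Gamma_0 / L_0),\, 0\})$ steps, and another $\cO(\sqrt{L_0} R / \sqrt{\varepsilon})$ iterations then force $\Gamma_{k+1} R^2 \leq \varepsilon$. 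Summing the counts from the two phases and absorbing universal constants into $\cO(\cdot)$ produces~\eqref{eq:jKlCMcGdKddup}.

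The main obstacle I expect is the bookkeeping at the phase boundary: one has to verify that the constants hidden in the asymptotic estimates for $\psi^{-1}$ and $\ell(4 \psi^{-1}(\cdot))$ are genuinely universal, that the ``essentially constant $\alpha_k$'' claim in the large-gradient phase remains valid uniformly as $\Gamma_k$ shrinks by many orders of magnitude, and that Theorem~\ref{thm:gamma} can be cleanly restarted at index $k_A$ with the new initial value $\Gamma_{k_A}$ in place of $\Gamma_0$. A related subtlety is that $\bar{R}$ may strictly exceed $R$, so the regime threshold ($\Gamma_k \bar{R}^2 \sim L_0 / L_1^2$) and the accuracy target ($\Gamma_k R^2 \sim \varepsilon$) are a priori incomparable; the $\min$ inside the logarithm in~\eqref{eq:jKlCMcGdKddup} is exactly the right object to merge them in a regime-agnostic way.
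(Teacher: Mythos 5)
Your proposal follows essentially the same route as the paper's proof: the paper also splits the trajectory at the smallest $k^*$ with $L_1^2\bar{R}^2\Gamma_{k^*}<L_0$, shows $\alpha_k\geq 1/(8L_1\bar{R})$ uniformly in the large-gradient phase (via the exact bound $\psi^{-1}(t)\leq 8L_1 t+\sqrt{2L_0 t}$ and AM--GM, which settles your concern about uniform constants), bounds $k^*$ by the geometric decay, restarts Theorem~\ref{thm:gamma} at $k^*$ with $\gamma_k\geq 1/(50L_0)$, and obtains the $\min$ inside the logarithm through exactly the case analysis your $\Gamma^\star=\max\{L_0/(L_1^2\bar{R}^2),\varepsilon/R^2\}$ encodes. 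The plan is correct and matches the paper up to constant factors.
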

The non-dominant term in \eqref{eq:jKlCMcGdKddup} is better than that of \eqref{eq:UPABGqcdwwz}, and is better than that of \eqref{eq:iDSJQFqPsREzFtnxi} when $\Gamma_0 = \nicefrac{2 \Delta}{R^2}$ and $\bar{R} = R.$

\subsection{Discussion and proof sketch}
Unlike Algorithm~\ref{alg:main}, Algorithm~\ref{alg:main_new} starts from $x^0$ where the initial local smoothness might be large. Nevertheless, the proof follows the proof techniques from Section~\ref{sec:proof_sketch} with one important difference: using mathematical induction, we prove that $\norm{\nabla f(y^{k})} \leq \psi^{-1}(\Gamma_{k} \bar{R}^2)$ for all $k \geq 0.$ This inequality means that $\norm{\nabla f(y^{k})}$ can be bounded by a decreasing sequence, and after several iterations, all $y^k$ satisfy $\ell(4 \norm{\nabla f(y^{k})}) \leq 2 \ell(0),$ allowing us to get $\cO(\nicefrac{\sqrt{\ell(0)} R}{\sqrt{\varepsilon}})$ complexity for small--$\varepsilon.$

\subsection{Comparison of the non-dominant term with previous methods}
For $(L_0,L_1)$--smoothness, according to Table~\ref{table:complexities_convex}, our non-dominant term is at most $L_1 R \log(\min \{\nicefrac{L_1^2 \Delta}{L_0}, \nicefrac{\Delta}{\varepsilon}\}) \leq L_1^2 R^2$, which is much better than the complexity of \citep{gorbunov2024methods}, since the latter has an exponential dependence on $L_1 R$. At the same time, the non-dominant term $\nu \times (L_1 R)^{2/3} \log(\nicefrac{\Delta}{\varepsilon})$ of \citep{vankov2024optimizing} can be better than ours, but our dominant term is better, so here we do not offer a uniform improvement. Finally, the non-dominant term in \citep{li2024convex} is worse than ours, since their term is at least $L_1^2 R^2 \sqrt{(\Delta + L_0 R^2) / \varepsilon}$, as shown in \citep[Section~F]{vankov2024optimizing}. Overall, across all regimes, our complexity is better than those in \citep{li2024convex,gorbunov2024methods}. We do not offer a uniform improvement over \citep{vankov2024optimizing}, but we are better when $\varepsilon$ is small.

\section{Superquadratic Growth of $\ell$}
\label{sec:super}
In the previous sections, we provided convergence rates under the assumption that $\psi$ is strictly increasing. For instance, the previous theory applies to $(\rho, L_0, L_1)$--smooth functions only if $\rho \leq 2.$ For cases where $\psi$ is not necessarily strictly increasing, we can prove the following theorems.
\begin{restatable}{theorem}{THEORESUPER}
  \label{thm:convex_increasing_super}
  Suppose that Assumptions~\ref{ass:gen_smooth} and \ref{ass:convex} hold. Let $\psi \,:\, \R_{+} \to \R_{+}$ such that $\psi(x) = \frac{x^2}{2 \ell(4 x)}$ be not necessarily strictly increasing. Find the largest $\Delta_{\max} \in (0, \infty]$ such that $\psi$ is strictly increasing on $[0, \Delta_{\max}).$ 
  For all $\delta \in [0, \psi(\Delta_{\max})),$ find the unique $\Delta_{\textnormal{left}}(\delta) \in [0, \Delta_{\max})$ and the smallest\footnote{if the set $\{x \in [\Delta_{\max}, \infty) \,:\, \psi(x) = \delta\}$ is empty, then $\Delta_{\textnormal{right}}(\delta) = \infty$} $\Delta_{\textnormal{right}}(\delta) \in [\Delta_{\max}, \infty]$ such that $\psi(\Delta_{\textnormal{left}}(\delta)) = \delta$ and $\psi(\Delta_{\textnormal{right}}(\delta)) = \delta.$ Take any $\delta \in [0, \frac{1}{2} \psi(\Delta_{\max})]$ such that $\ell(4 \Delta_{\textnormal{left}}(\delta)) \leq 2 \ell(0),$ $\Delta_{\textnormal{right}}(\delta) \geq 2 M_{\bar{R}},$ and $\delta \leq \Delta,$ where\footnote{or it is sufficient to find any $M_{\bar{R}}$ such that $M_{\bar{R}}~\geq~ \max_{f(x) - f(x^*) \leq \Delta, \norm{x - x^*} \leq 2 \bar{R}} \norm{\nabla f(x)}.$ The latter term is well-defined since $\{x \in \R^d \, | \, f(x) - f(x^*) \leq \Delta, \norm{x - x^*} \leq 2 \bar{R}\}$ is compact and strictly belongs to $\mathcal{X}.$} $M_{\bar{R}} \eqdef \max\limits_{f(x) - f(x^*) \leq \Delta, \norm{x - x^*} \leq 2 \bar{R}} \norm{\nabla f(x)}.$
  Then Algorithm~\ref{alg:main}
  guarantees that
  \begin{align*}
    f(y^{k+1}) - f(x^*) \leq \Gamma_{k+1} \bar{R}^2 \leq \frac{18 \ell\left(0\right) \bar{R}^2}{\left(k + 1 - \bar{k}\right)^2}
  \end{align*}
  for all $k \geq \bar{k} \eqdef \max\left\{1 + \frac{1}{2} \log_{3/2}\left(\frac{\Gamma_0}{8 \ell\left(0\right)}\right), 0\right\}$ with any $\bar{R} \geq \norm{x^{0} - x^*}.$
\end{restatable}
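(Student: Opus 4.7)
The plan is to run the inductive argument of Section~\ref{sec:proof_sketch} essentially verbatim; the only new ingredient is a mechanism for excluding the ``right'' branch of $\psi$, so that the function-gap bound still translates into a gradient bound compatible with $\gamma = 1/(2\ell(0))$.

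I would set up the inductive invariant at iteration $k$: (i) $V_k \le \bigl(\prod_{i=0}^{k-1} \tfrac{1}{1+\alpha_i}\bigr) V_0 \le \delta$; (ii) $\|\nabla f(y^k)\| \le \Delta_{\textnormal{left}}(\delta)$, which by the hypothesis $\ell(4\Delta_{\textnormal{left}}(\delta)) \le 2\ell(0)$ gives $\ell(4\|\nabla f(y^k)\|) \le 2\ell(0)$; and (iii) $\|y^k - x^*\| \le 2\bar{R}$. For the base case, standard convex \algname{GD} analysis guarantees $\|\bar{x} - x^*\| \le R \le \bar{R}$ and $f(\bar{x}) - f(x^*) \le \delta/2$, so with $\Gamma_0 = \delta/\bar{R}^2$ we obtain $V_0 \le \delta$. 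Lemma~\ref{lemma:bound_sub} turns this into $\psi(\|\nabla f(y^0)\|) \le \delta \le \tfrac{1}{2}\psi(\Delta_{\max})$, placing $\|\nabla f(y^0)\|$ in $[0,\Delta_{\textnormal{left}}(\delta)] \cup [\Delta_{\textnormal{right}}(\delta),\infty)$; since $\|y^0 - x^*\| \le \bar{R}$ gives $\|\nabla f(y^0)\| \le M_{\bar{R}} \le \Delta_{\textnormal{right}}(\delta)/2 < \Delta_{\textnormal{right}}(\delta)$, only the left branch is feasible.

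For the inductive step I would replay the ``chicken-and-egg'' construction: define $\gamma^* \ge 0$ as the smallest root of $g(\gamma) = \gamma - 1/\ell(2\|\nabla f(y^k)\| + \|\nabla f(y^{k+1}_{\gamma})\|)$, which exists and is positive by continuity of $g$, $g(0)<0$, and $g(1/\ell(2\|\nabla f(y^k)\|)) \ge 0$. For every $\gamma \in [0,\gamma^*]$, inequality~\eqref{eq:hAnZdfCzZSdqlHD} yields $(1+\alpha_{k,\gamma}) V_{k+1,\gamma} \le V_k \le \delta$; combined with an inductive bound propagated through the convex combination defining $y^{k+1}_{\gamma}$, this also gives $\|y^{k+1}_{\gamma} - x^*\| \le 2\bar{R}$. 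Then Lemma~\ref{lemma:bound_sub} yields $\psi(\|\nabla f(y^{k+1}_{\gamma})\|) \le \delta$, and the distance bound together with $\Delta_{\textnormal{right}}(\delta) \ge 2M_{\bar{R}}$ excludes the right branch, forcing $\|\nabla f(y^{k+1}_{\gamma})\| \le \Delta_{\textnormal{left}}(\delta)$ and hence $\ell(4\|\nabla f(y^{k+1}_{\gamma})\|) \le 2\ell(0)$. The same short computation as in Section~\ref{sec:proof_sketch} then gives $\gamma^* \ge 1/(2\ell(0))$, so the choice $\gamma = 1/(2\ell(0))$ is admissible, \eqref{eq:hzUSJDK} reduces to the actual $(k+1)$\textsuperscript{th} iterate of Algorithm~\ref{alg:main}, and the induction closes. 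Unrolling $V_{k+1} \le V_0/\prod_{i=0}^{k}(1+\alpha_i)$ yields $f(y^{k+1}) - f(x^*) \le \Gamma_{k+1}\bar{R}^2$ exactly as in Theorem~\ref{thm:convex_increasing}.

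The main obstacle I anticipate is establishing the distance bound $\|y^{k+1}_{\gamma} - x^*\| \le 2\bar{R}$ that is used to exclude the right branch. The projected iterate $u^{k+1}_{\gamma}$ is controlled through the quadratic part of the Lyapunov function via $\|u^{k+1}_{\gamma} - x^*\|^2 \le 2V_k/\Gamma_{k+1,\gamma}$; since $V_0 \le \delta = \Gamma_0 \bar{R}^2$ and $\Gamma_k$ is nonincreasing, this does not immediately yield a clean $\bar{R}$-type estimate and must be combined with the convex combination $y^{k+1}_{\gamma} = (1-\theta_{k,\gamma}) y^k + \theta_{k,\gamma} u^k - \text{(gradient step)}$ through an induction on $\|y^k - x^*\|$. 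If a naive chain loses a numerical constant, I would either rescale the factor $2$ in the hypothesis $\Delta_{\textnormal{right}}(\delta) \ge 2 M_{\bar{R}}$ or enlarge the radius in the definition of $M_{\bar{R}}$ to absorb the constant that actually emerges from the Lyapunov argument; the structure of the theorem is unchanged.
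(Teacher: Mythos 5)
Your overall architecture is the paper's: same Lyapunov function, same chicken-and-egg choice of $\gamma^*$, same use of the branch dichotomy from Lemma~\ref{lemma:func_to_grad_spec_diff} (which is the correct tool here, rather than Lemma~\ref{lemma:bound_sub}, whose statement presumes $\psi$ strictly increasing --- what you actually need is the inequality $\psi(\norm{\nabla f(y)}) \leq \delta$ plus the left/right dichotomy), and the same exclusion of the right branch via $\Delta_{\textnormal{right}}(\delta) \geq 2 M_{\bar{R}}$ and a ball-confinement argument. The part you flag as the ``main obstacle'' is, however, exactly where your sketch has a genuine gap, and your proposed fallback does not close it. A naive triangle inequality on $y^{k+1}_{\gamma} = \frac{1}{1+\alpha_{k,\gamma}} y^k + \frac{\alpha_{k,\gamma}}{1+\alpha_{k,\gamma}} u^k - \frac{\gamma}{1+\alpha_{k,\gamma}} \nabla f(y^k)$ leaves an additive term $\frac{\gamma}{1+\alpha_{k,\gamma}}\norm{\nabla f(y^k)}$ at \emph{every} iteration; if your inductive radius is any fixed constant times $\bar{R}$, the bound you recover at step $k+1$ exceeds it, so the induction does not close, and rescaling the factor $2$ in $\Delta_{\textnormal{right}}(\delta) \geq 2M_{\bar{R}}$ or enlarging the radius in $M_{\bar{R}}$ only shifts the same problem: a one-time constant cannot absorb per-iteration drift. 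The paper's resolution is to show the drift is actually zero: group the gradient step with $y^k$, writing $y^{k+1}_{\gamma} - x^* = \frac{1}{1+\alpha_{k,\gamma}}\bigl((y^k - \gamma\nabla f(y^k)) - x^*\bigr) + \frac{\alpha_{k,\gamma}}{1+\alpha_{k,\gamma}}(u^k - x^*)$, and prove $\norm{y^k - \gamma\nabla f(y^k) - x^*} \leq \norm{y^k - x^*}$ for all $\gamma \leq \gamma^* \leq \frac{1}{\ell(2\norm{\nabla f(y^k)})}$ by applying Lemma~\ref{lemma:smooth_convex} with $x = x^*$, $y = y^k$, which gives
\begin{align*}
\norm{y^k - \gamma\nabla f(y^k) - x^*}^2 \leq \norm{y^k - x^*}^2 + \gamma\norm{\nabla f(y^k)}^2\left(\gamma - 2\int_0^1 \frac{1-v}{\ell(\norm{\nabla f(y^k)}\, v)}\, dv\right) \leq \norm{y^k - x^*}^2,
\end{align*}
since $2\int_0^1 \frac{1-v}{\ell(\norm{\nabla f(y^k)} v)}dv \geq \frac{1}{\ell(2\norm{\nabla f(y^k)})} \geq \gamma$. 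With this nonexpansiveness, $y^{k+1}_{\gamma} - x^*$ is a convex combination of two vectors of norm at most $2\bar{R}$, so the radius is preserved exactly and no constants are lost.

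Two smaller corrections. First, your inductive invariant must also carry $\norm{u^k - x^*} \leq 2\bar{R}$ (the convex combination above needs it), and your concern that the Lyapunov bound on $u^{k+1}_{\gamma}$ is not ``clean'' is unfounded: from \eqref{eq:TqfxanGJGnaQsmS} one has $(1+\alpha_{k,\gamma})\Gamma_{k+1,\gamma} = \Gamma_k$, so the descent inequality gives $\frac{\Gamma_k}{2}\norm{u^{k+1}_{\gamma} - x^*}^2 \leq V_k \leq \frac{\Gamma_k}{\Gamma_0} V_0 \leq \Gamma_k \bar{R}^2$, hence $\norm{u^{k+1}_{\gamma} - x^*} \leq \sqrt{2}\,\bar{R} \leq 2\bar{R}$ uniformly in $k$ and $\gamma \leq \gamma^*$. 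Second, the condition $\delta \leq \frac{1}{2}\psi(\Delta_{\max})$ is what guarantees $\delta$ lies strictly inside $[0,\psi(\Delta_{\max}))$ so the dichotomy of Lemma~\ref{lemma:func_to_grad_spec_diff} applies; you use it correctly, but it should be stated as part of the invariant's justification rather than folded into Lemma~\ref{lemma:bound_sub}.
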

In order to apply the theorem and algorithm, we first have to find the largest $\Delta_{\max} \in (0, \infty]$ such that $\psi$ is strictly increasing on $[0, \Delta_{\max}).$ If $\psi$ is strictly increasing on $\R_+,$ then $\Delta_{\max} = \infty.$ Next, we should find $\Delta_{\textnormal{left}}(\delta)$ and $\Delta_{\textnormal{right}}(\delta)$ for all $\delta \in [0, \psi(\Delta_{\max})).$ The point $\Delta_{\textnormal{left}}(\delta) \in [0, \Delta_{\max})$ is the solution of $\psi(\Delta_{\textnormal{left}}(\delta)) = \delta,$ which exists and is unique for all $\delta \in [0, \psi(\Delta_{\max}))$ because $\psi$ is strictly increasing on $[0, \Delta_{\max}).$ Notice that $\psi(x) > \delta$ for all $x \in (\Delta_{\textnormal{left}}(\delta), \Delta_{\max}).$ Thus, there are two options: either $\psi(x) > \delta$ for all $x \in (\Delta_{\textnormal{left}}(\delta), \infty),$ and we define $\Delta_{\textnormal{right}}(\delta) = \infty,$ or there exists the first moment $\Delta_{\textnormal{right}}(\delta) \in [\Delta_{\max}, \infty)$ when $\psi(\Delta_{\textnormal{right}}(\delta)) = \delta.$ In other words, $\Delta_{\textnormal{right}}(\delta)$ is the second time when $\psi$ intersects $\delta.$ We define the set of $\delta$ allowed to use in the algorithm:
\begin{align*}
  \textstyle Q \eqdef 
  \Big\{&\delta \in [0, \psi(\Delta_{\max}) / 2]:\ell(4 \Delta_{\textnormal{left}}(\delta)) \leq 2 \ell(0), \\
  &\qquad \Delta_{\textnormal{right}}(\delta) \geq 2 M_{\bar{R}}, \delta \leq \Delta\Big\},
\end{align*}
which is non-empty due to Lemma~\ref{lemma:func_to_grad_spec_diff}.
\begin{restatable}{theorem}{THEORESUPERRATE}
  \label{thm:convex_increasing_super_rate}
  Consider the assumptions and results of Theorem~\ref{thm:convex_increasing_super}. The oracle complexity (i.e., the number of gradient calls) required to find an $\varepsilon$--solution is
\begin{align*}
  \frac{5 \sqrt{\ell(0)} \bar{R}}{\sqrt{\varepsilon}} + k(\delta)
\end{align*}
for all $\delta \in Q,$ where 
  $k(\delta) \eqdef \max\left\{1 + \frac{1}{2} \log_{3/2}\left(\frac{\delta}{8 \ell(0) \bar{R}^2}\right),\, 0\right\} + k_{\textnormal{\algname{GD}}}(\delta),$
$k_{\textnormal{\algname{GD}}}(\delta)$ is the oracle complexity of \algname{GD} for finding a point $\bar{x}$ such that $f(\bar{x}) - f(x^*) \leq \delta / 2.$ 
\end{restatable}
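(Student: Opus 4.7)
The plan is to derive Theorem~\ref{thm:convex_increasing_super_rate} as a mechanical corollary of Theorem~\ref{thm:convex_increasing_super} and Theorem~\ref{thm:gamma}, following exactly the pattern by which Theorem~\ref{thm:convex_increasing_rate} is obtained from Theorem~\ref{thm:convex_increasing}. The set $Q$ has been carefully constructed so that every $\delta \in Q$ meets all the preconditions of Theorem~\ref{thm:convex_increasing_super}, so the only real work is combining the per-iteration bound with the sequence bound and adding the cost of the warm-start \algname{GD} phase.

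First, I would fix any $\delta \in Q$ and invoke Theorem~\ref{thm:convex_increasing_super}, which guarantees that after \algname{GD} produces $\bar{x}$ with $f(\bar{x}) - f(x^*) \leq \delta/2$ (in $k_{\textnormal{\algname{GD}}}(\delta)$ oracle calls), the accelerated phase of Algorithm~\ref{alg:main} — initialized with $\Gamma_0 = \delta / \bar{R}^2$ and the step size $\gamma = 1/(2\ell(0))$ — satisfies
\begin{equation*}
f(y^{k+1}) - f(x^*) \leq \Gamma_{k+1} \bar{R}^2
\end{equation*}
for every $k \geq 0$. Next, I would apply Theorem~\ref{thm:gamma} with this $\gamma$ and this $\Gamma_0$: for all $k \geq \bar{k} := \max\{1 + \tfrac{1}{2}\log_{3/2}(\gamma \Gamma_0/4),\, 0\} = \max\{1 + \tfrac{1}{2}\log_{3/2}(\delta/(8 \ell(0) \bar{R}^2)),\, 0\}$, one has $\Gamma_{k+1} \leq 9/(\gamma(k+1-\bar{k})^2) = 18\ell(0)/(k+1-\bar{k})^2$, hence $f(y^{k+1}) - f(x^*) \leq 18 \ell(0) \bar{R}^2 / (k+1-\bar{k})^2$.

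The rest is arithmetic. Requiring the right-hand side to be at most $\varepsilon$ gives $k+1 - \bar{k} \geq \sqrt{18 \ell(0)} \bar{R}/\sqrt{\varepsilon}$, and since $\sqrt{18} < 5$ it suffices to run $\lceil 5 \sqrt{\ell(0)} \bar{R}/\sqrt{\varepsilon}\rceil$ accelerated iterations after the first $\bar{k}$, each costing one gradient call. Adding the $k_{\textnormal{\algname{GD}}}(\delta)$ oracle calls of the pre-running phase and the $\bar{k}$ ``burn-in'' accelerated iterations yields
\begin{equation*}
\frac{5\sqrt{\ell(0)} \bar{R}}{\sqrt{\varepsilon}} + \bar{k} + k_{\textnormal{\algname{GD}}}(\delta) \;=\; \frac{5\sqrt{\ell(0)} \bar{R}}{\sqrt{\varepsilon}} + k(\delta),
\end{equation*}
which is exactly the claimed bound.

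The potentially delicate step — and what separates this result from Theorem~\ref{thm:convex_increasing_rate} — is checking that the hypotheses of Theorem~\ref{thm:convex_increasing_super} are indeed satisfied for every $\delta \in Q$: one needs $\delta \in [0, \tfrac{1}{2}\psi(\Delta_{\max})]$, $\ell(4 \Delta_{\textnormal{left}}(\delta)) \leq 2\ell(0)$, and $\Delta_{\textnormal{right}}(\delta) \geq 2 M_{\bar{R}}$. All three conditions are baked into the definition of $Q$, so the corollary goes through verbatim. I expect no substantive obstacle; the only place to be careful is to verify that the warm-start \algname{GD} is still applicable under the superquadratic-$\ell$ regime so that $k_{\textnormal{\algname{GD}}}(\delta)$ is finite — but this is exactly the non-accelerated guarantee from \citet{tyurin2024toward} under Assumption~\ref{ass:gen_smooth}, already used in Section~\ref{sec:subquadratic}.
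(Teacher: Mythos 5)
Your proposal is correct and matches the paper's argument: the paper proves Theorem~\ref{thm:convex_increasing_super_rate} simply by repeating the proof of Theorem~\ref{thm:convex_increasing_rate} (GD warm-start cost $k_{\textnormal{\algname{GD}}}(\delta)$, then Theorem~\ref{thm:gamma} with $\gamma = 1/(2\ell(0))$ and $\Gamma_0 = \delta/\bar{R}^2$, then the per-iteration bound from Theorem~\ref{thm:convex_increasing_super}), with the only change being the admissible set of $\delta$, i.e., $\delta \in Q$. Your explicit verification that $Q$ encodes exactly the hypotheses of Theorem~\ref{thm:convex_increasing_super} and the arithmetic $\sqrt{18} < 5$ are precisely the (implicit) content of the paper's one-line proof.
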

\begin{corollary}
  \label{cor:general}
  In Theorem~\ref{thm:convex_increasing_super_rate}, minimizing over $\delta$ and taking $\bar{R} = R \eqdef \norm{x^{0} - x^*},$ the oracle complexity is
  \begin{align}
    \label{eq:bXpbllgkiSQgRkTr}
    \frac{5 \sqrt{\ell(0)} R}{\sqrt{\varepsilon}} + \underbrace{\min_{\delta \in Q} k(\delta)}_{\textnormal{does not depend on $\varepsilon$}}.
  \end{align}
\end{corollary}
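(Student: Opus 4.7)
The plan is to derive Corollary~\ref{cor:general} as a direct consequence of Theorem~\ref{thm:convex_increasing_super_rate}. The theorem provides the upper bound $\tfrac{5\sqrt{\ell(0)}\bar{R}}{\sqrt{\varepsilon}} + k(\delta)$ on the oracle complexity, valid \emph{simultaneously} for every $\bar{R} \geq R$ and every $\delta \in Q$. The first step is to substitute the admissible choice $\bar{R} = R$ (trivially allowed), which reduces the leading term to $\tfrac{5\sqrt{\ell(0)}R}{\sqrt{\varepsilon}}$ and makes it independent of $\delta$. Because the bound then holds for every $\delta \in Q$, it also holds for $\min_{\delta \in Q} k(\delta)$, yielding \eqref{eq:bXpbllgkiSQgRkTr}.

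Next I need to verify the underbraced claim that $\min_{\delta \in Q} k(\delta)$ does not depend on $\varepsilon$. This is immediate from the definition of $k(\delta)$ in Theorem~\ref{thm:convex_increasing_super_rate}: both the logarithmic term $\max\{1+\tfrac{1}{2}\log_{3/2}(\delta/(8\ell(0)\bar{R}^2)),0\}$ and $k_{\textnormal{\algname{GD}}}(\delta)$ are functions only of $\delta$, $\ell$, $\bar{R}$, $f$, and $x^0$ (with the \algname{GD} phase targeting accuracy $\delta/2$), and none of these involves the final accuracy $\varepsilon$.

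The only potential obstacle is confirming that this minimum is finite, i.e., that $Q$ contains at least one positive $\delta$ for which $k_{\textnormal{\algname{GD}}}(\delta) < \infty$. Since $\ell$ is continuous with $\ell(0) > 0$ and $\Delta_{\textnormal{left}}(\delta) \to 0$ as $\delta \to 0^+$ (by continuity of $\psi^{-1}$ near $0$), the condition $\ell(4\Delta_{\textnormal{left}}(\delta)) \leq 2\ell(0)$ holds for all sufficiently small positive $\delta$. Likewise, $\psi$ is strictly positive on $(0,\infty)$, so either $\Delta_{\textnormal{right}}(\delta) = \infty$ outright, or $\Delta_{\textnormal{right}}(\delta) \to \infty$ as $\delta \to 0^+$; in either case $\Delta_{\textnormal{right}}(\delta) \geq 2M_{\bar{R}}$ for small enough $\delta$. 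For any such $\delta > 0$, the state-of-the-art \algname{GD} from \citet{tyurin2024toward} converges under Assumption~\ref{ass:gen_smooth}, so $k_{\textnormal{\algname{GD}}}(\delta) < \infty$, producing a finite value of $\min_{\delta \in Q} k(\delta)$ and completing the argument. No difficulty beyond this existence check is anticipated, as the rest is pure substitution into the statement of Theorem~\ref{thm:convex_increasing_super_rate}.
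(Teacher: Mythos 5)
Your proposal is correct and matches the paper's (implicit) argument: the corollary is a direct consequence of Theorem~\ref{thm:convex_increasing_super_rate}, obtained by setting $\bar{R} = R$ and using the fact that the bound holds for every $\delta \in Q$, while $k(\delta)$ involves only $\delta$, $\ell$, $\bar{R}$, $f$, and $x^0$, never $\varepsilon$. Your additional check that $Q$ contains small positive $\delta$ with $k_{\textnormal{\algname{GD}}}(\delta) < \infty$ goes slightly beyond what the paper writes (it handles this only in the examples, e.g.\ Section~\ref{sec:example_smooth}), and it is a sound and welcome addition.
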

% In the next section, we provide an example to illustrate how to use the theorem. 
In Section~\ref{sec:example_smooth}, we consider an example, $(\rho, L_0, L_1)$--smoothness, to illustrate how to use the theorem, and show that it guarantees a rate of $\nicefrac{\sqrt{L_0} R}{\sqrt{\varepsilon}}$ rate for any $\rho \geq 0$ and a sufficiently small $\varepsilon.$ The main observation in \eqref{eq:bXpbllgkiSQgRkTr} is that we obtain the $\nicefrac{\sqrt{\ell(0)} R}{\sqrt{\varepsilon}}$ rate for small $\varepsilon$, given an appropriate or optimal choice of $\delta$ that minimizes $k(\delta).$ The main difference between Theorem~\ref{thm:convex_increasing_super_rate} and Theorem~\ref{thm:convex_increasing_rate} is that the rate in Theorem~\ref{thm:convex_increasing_super_rate} depends on $M_{\bar{R}}$ and requires its estimate.

\subsection{Example: $(\rho, L_0, L_1)$--smoothness}
\label{sec:example_smooth}
% To explain how the theorem works, 
To explain how Theorem~\ref{thm:convex_increasing_super_rate} and Corollary~\ref{cor:general} work, let us consider $(\rho, L_0, L_1)$--smoothness with $\ell(x) = L_0 + L_1 x^{\rho}$ and $\rho > 0.$ In this case, $\psi(x) \simeq \frac{x^2}{L_0 + L_1 x^{\rho}},$ which is strictly increasing until $\Delta_{\max} = \infty$ if $\rho \leq 2,$ and until $\Delta_{\max} = (2 L_0 / ((\rho - 2) L_1))^{1 / \rho}$ if $\rho > 2.$ If $\rho \leq 2,$ then 
% $Q \eqdef \left\{\delta \geq 0 \,:\, \ell(4 \psi^{-1}(\delta)) \leq 2 \ell(0)\right\} = \left\{\delta \geq 0 \,:\, \ell(8 \sqrt{\delta \ell\left(0\right)}) \leq 2 \ell\left(0\right)\right\} = \left\{\delta \geq 0 \,:\, \delta \leq L_0^{2 / \rho - 1} / (64 L_1^{2 / \rho})\right\}$ 
\begin{align*}
Q 
&\eqdef \left\{\delta \geq 0 \,:\, \ell(4 \psi^{-1}(\delta)) \leq 2 \ell(0), \delta \leq \Delta\right\} \\
&= \left\{\delta \geq 0 \,:\, \ell(8 \sqrt{\delta \ell\left(0\right)}) \leq 2 \ell\left(0\right), \delta \leq \Delta\right\} \\
&= \left\{\delta \geq 0 \,:\, \delta \leq L_0^{2 / \rho - 1} / (64 L_1^{2 / \rho}), \delta \leq \Delta\right\}
\end{align*}
and, using GD and the result from Table~2 by \citet{tyurin2024toward} with $\rho < 2$ to estimate $k_{\textnormal{\algname{GD}}}(\delta)$ and Theorem~\ref{thm:convex_increasing_super_rate},
% \begin{align*}
%   &\frac{5 \sqrt{\ell(0)} \bar{R}}{\sqrt{\varepsilon}} + \min\limits_{\delta \in Q} k(\delta) \\
%   &\textstyle = \cO\left(\frac{\sqrt{L_0} \bar{R}}{\sqrt{\varepsilon}} + \min\limits_{\delta \in Q} \left[\left[\log\left(\frac{\delta}{L_0 \bar{R}^2}\right)\right]_+ + \frac{L_0 \bar{R}^2}{\delta} + \frac{L_1 \Delta^{\rho / 2} \bar{R}^{2 - \rho}}{\delta^{1 - \rho / 2}}\right]\right) \\
%   &\textstyle= \cO\left(\frac{\sqrt{L_0} R}{\sqrt{\varepsilon}} + \frac{L_1 \Delta^{\rho / 2}}{L_0^{1 - \rho / 2}} + L_1^{2 / \rho} L_0^{2 - 2 / \rho} R^2 + \frac{L_1^{2 / \rho} \Delta^{\rho / 2} R^{2 - \rho}}{L_0^{2 / \rho + \rho / 2 - 2}}\right),
% \end{align*}
\[
\resizebox{\linewidth}{!}{$
\begin{aligned}
  &\textstyle \frac{5 \sqrt{\ell(0)} \bar{R}}{\sqrt{\varepsilon}} + \min\limits_{\delta \in Q} k(\delta) \\
  &\textstyle = \cO\left(\frac{\sqrt{L_0} \bar{R}}{\sqrt{\varepsilon}} + \min\limits_{\delta \in Q} \left[\left[\log\left(\frac{\delta}{L_0 \bar{R}^2}\right)\right]_+ + \frac{L_0 \bar{R}^2}{\delta} + \frac{L_1 \Delta^{\rho / 2} \bar{R}^{2 - \rho}}{\delta^{1 - \rho / 2}}\right]\right) \\
  &\textstyle = \cO\left(\frac{\sqrt{L_0} \bar{R}}{\sqrt{\varepsilon}} + \left[\frac{L_0 \bar{R}^2}{\bar \delta} + \frac{L_1 \Delta^{\rho / 2} \bar{R}^{2 - \rho}}{{\bar \delta}^{1 - \rho / 2}}\right]\right),
  % &\textstyle= \cO\left(\frac{\sqrt{L_0} R}{\sqrt{\varepsilon}} + \frac{L_1 \Delta^{\rho / 2}}{L_0^{1 - \rho / 2}} + L_1^{2 / \rho} L_0^{2 - 2 / \rho} R^2 + \frac{L_1^{2 / \rho} \Delta^{\rho / 2} R^{2 - \rho}}{L_0^{2 / \rho + \rho / 2 - 2}}\right),
\end{aligned}
$}
\]
% \[
% \resizebox{\linewidth}{!}{$
% \begin{aligned}
%   &\textstyle \frac{5 \sqrt{\ell(0)} \bar{R}}{\sqrt{\varepsilon}} + \min\limits_{\delta \in Q} k(\delta) \\
%   &\textstyle = \cO\left(\frac{\sqrt{L_0} \bar{R}}{\sqrt{\varepsilon}} + \min\limits_{\delta \in Q} \left[\left[\log\left(\frac{\delta}{L_0 \bar{R}^2}\right)\right]_+ + \frac{L_0 \bar{R}^2}{\delta} + \frac{L_1 \Delta^{\rho / 2} \bar{R}^{2 - \rho}}{\delta^{1 - \rho / 2}}\right]\right) \\
%   &\textstyle = \cO\left(\frac{\sqrt{L_0} \bar{R}}{\sqrt{\varepsilon}} + \min\limits_{\delta \in Q} \left[\left[\log\left(\frac{\delta}{L_0 \bar{R}^2}\right)\right]_+ + \frac{L_0 \bar{R}^2}{\delta} + \frac{L_1 \Delta^{\rho / 2} \bar{R}^{2 - \rho}}{\delta^{1 - \rho / 2}}\right]\right) \\
%   % &\textstyle= \cO\left(\frac{\sqrt{L_0} R}{\sqrt{\varepsilon}} + \frac{L_1 \Delta^{\rho / 2}}{L_0^{1 - \rho / 2}} + L_1^{2 / \rho} L_0^{2 - 2 / \rho} R^2 + \frac{L_1^{2 / \rho} \Delta^{\rho / 2} R^{2 - \rho}}{L_0^{2 / \rho + \rho / 2 - 2}}\right),
% \end{aligned}
% $}
% \]
where $\Delta \eqdef f(x^0) - f(x^*),$ and we take $\bar{R} = R$ and $\bar \delta = \min\{L_0^{2 / \rho - 1} / L_1^{2 / \rho}, L_0 \bar{R}^2, \Delta\} / 64$ to get the last complexity (which might not be the optimal choice, but a sufficient choice to show that the first term dominates if $\varepsilon$ is small). Similarly, for the case $\rho = 2,$ the oracle complexity at least 
\begin{align*}
  \textstyle \cO\left(\frac{\sqrt{L_0} R}{\sqrt{\varepsilon}} + \frac{L_0 R^2}{\bar{\delta}}+ \frac{L_1 M_0^{\rho} R^2}{\bar{\delta}}\right)
\end{align*}
with $\bar{\delta} = \min\{L_0^{2 / \rho - 1} / L_1^{2 / \rho}, L_0 R^2, \Delta\} / 64$ and $\bar{R} = R,$ where we take the \algname{GD} rate from \citep{li2024convex,tyurin2024toward}.

We now consider the case $\rho > 2.$ Let us define $\Delta_{1} \eqdef \nicefrac{1}{2}(L_0 / L_1)^{1 / \rho}.$ Notice that $\Delta_{\max} \geq \Delta_1.$ For all $\delta \in [0, \psi(\Delta_1)),$ we can find $\Delta_{\textnormal{left}}(\delta) = \psi^{-1}(\delta) \simeq \sqrt{L_0 \delta}.$ For all $x \geq \Delta_{\max},$ $\psi(x)$ is decreasing, and $\psi(x) \simeq \frac{x^2}{L_1 x^{\rho}}$ Thus, $\Delta_{\textnormal{right}}(\delta) \simeq (L_1 \delta)^{1 / (2 - \rho)}$ and we should minimize $k(\delta)$ over the set $\{\delta \in [0, L_0^{2/\rho - 1} / L_1^{2 / \rho}]: \delta \leq L_0 / L_1^2, \delta \leq (1 / (2 M_{\bar{R}}))^{\rho - 2} / L_1, \delta \leq \Delta\} \subseteq Q$ (up to constant factors). It is sufficient to take 
\begin{align}
  \label{eq:asdasdasfas}
  % \bar{\delta} \eqdef \min\left\{L_0^{2/\rho - 1} / L_1^{2 / \rho}, L_0 / L_1^2, (1 / (2 M_{\bar{R}}))^{\rho - 2} / L_1, L_0 \bar{R}^2\right\}
  \textstyle \bar{\delta} \eqdef \min\left\{\frac{L_0^{2/\rho - 1}}{L_1^{2 / \rho}}, \frac{L_0}{L_1^2}, \frac{1}{\left(2 M_{\bar{R}}\right)^{\rho - 2} L_1}, L_0 \bar{R}^2, \Delta\right\}
\end{align}
to get the complexity 
\begin{align*}
  &\textstyle \cO\left(\frac{\sqrt{L_0} R}{\sqrt{\varepsilon}} + \min\limits_{\delta \in Q} k(\delta)\right) \\
  &\textstyle = \cO\left(\frac{\sqrt{L_0} R}{\sqrt{\varepsilon}} + \frac{L_0 R^2}{\bar{\delta}}+ \frac{L_1 M_0^{\rho} R^2}{\bar{\delta}}\right),
\end{align*}
where $M_0 \eqdef \norm{\nabla f(x^0)},$ $k_{\textnormal{\algname{GD}}}(\delta)$ is the rate of GD from \citep{li2024convex,tyurin2024toward}, and we take $\bar{R} = R.$ Thus, we can guarantee the $\nicefrac{\sqrt{L_0} R}{\sqrt{\varepsilon}}$ rate for any $\rho \geq 0$ and a sufficiently small $\varepsilon.$

\subsection{Discussion and proof sketch}
In the superquadratic case, we use Algorithm~\ref{alg:main} instead of Algorithm~\ref{alg:main_new} because the latter relies on the fact that $\psi$ is invertible on $\R_+.$ The former algorithm does not need this and allows us to get the $\nicefrac{\sqrt{L_0} R}{\sqrt{\varepsilon}}$ rate for small--$\varepsilon.$ While once again the proof of Theorem~\ref{thm:convex_increasing_super_rate} follows the discussion from Section~\ref{sec:proof_sketch}, there is one important difference. Since $\psi$ might not be invertible, we cannot conclude that $\norm{\nabla f(y^k)} \leq \psi^{-1}(\delta)$ if $f(y^k) - f(x^*) \leq \delta.$ Instead, we can only guarantee that if $f(y^k) - f(x^*) \leq \delta$ and $\delta \in [0, \psi\left(\Delta_{\max}\right)),$ then either $\norm{\nabla f(y^k)} \leq \Delta_{\textnormal{left}}(\delta)$ or $\norm{\nabla f(y^k)} \geq \Delta_{\textnormal{right}}(\delta),$ where $\Delta_{\max},\Delta_{\textnormal{left}}(\delta),$ and $\Delta_{\textnormal{right}}(\delta)$ are defined in Section~\ref{sec:super}. The latter case is ``bad'' for the analysis. To avoid it, we take $\delta$ such that $\Delta_{\textnormal{right}}(\delta) \geq 2 M_{\bar{R}} = 2 \max_{f(x) - f(x^*) \leq \Delta, \norm{x - x^*} \leq 2 \bar{R}} \norm{\nabla f(x)}$ and, using mathematical induction, ensure that $\norm{\nabla f(y^k)} \leq M_{\bar{R}}.$ To get the last bound, we prove that $y^k$ never leaves set $\{x \in \R^d \, | \, f(x) - f(x^*) \leq \Delta, \norm{x - x^*} \leq 2 \bar{R}\},$ which requires additional technical steps. Thus, we are left with the ``good'' case $\norm{\nabla f(y^k)} \leq \Delta_{\textnormal{left}}(\delta),$ which yields $\ell(4 \norm{\nabla f(y^k)}) \leq 2 \ell(0)$ for $\delta$ such that $\ell(4 \Delta_{\textnormal{left}}(\delta)) \leq 2 \ell(0).$

\section{Conclusion}
While we have achieved a better oracle complexity for small~$\varepsilon$, the optimal non-dominant term for large~$\varepsilon$, which can improve the terms not depending on $\varepsilon$ in Corollaries~\ref{cor:first}, \ref{cor:general} and Theorem~\ref{thm:convex_increasing_better_rate} for $\ell$--smooth functions, remains unclear and require further investigations. Moreover, it would be interesting to extend our results to stochastic and finite-sum settings \citep{schmidt2017minimizing,lan2020first}.
% and develop versions of the methods that are adaptive to the input parameters. 
We leave these directions for future work, which can build on our new insights and algorithms.

\section*{Acknowledgements}
The work was supported by the grant for research centers in the field of AI provided by the Ministry of Economic Development of the Russian Federation in accordance with the agreement 000000C313925P4F0002 and the agreement №139-10-2025-033.

\section*{Impact Statement}
This paper presents work whose goal is to advance the field of Machine
Learning. There are many potential societal consequences of our work, none
which we feel must be specifically highlighted here.

\bibliography{example_paper}
\bibliographystyle{icml2026}

%%%%%%%%%%%%%%%%%%%%%%%%%%%%%%%%%%%%%%%%%%%%%%%%%%%%%%%%%%%%%%%%%%%%%%%%%%%%%%%
%%%%%%%%%%%%%%%%%%%%%%%%%%%%%%%%%%%%%%%%%%%%%%%%%%%%%%%%%%%%%%%%%%%%%%%%%%%%%%%
% APPENDIX
%%%%%%%%%%%%%%%%%%%%%%%%%%%%%%%%%%%%%%%%%%%%%%%%%%%%%%%%%%%%%%%%%%%%%%%%%%%%%%%
%%%%%%%%%%%%%%%%%%%%%%%%%%%%%%%%%%%%%%%%%%%%%%%%%%%%%%%%%%%%%%%%%%%%%%%%%%%%%%%
\newpage
\appendix
\onecolumn

\section{Experiments}

\subsection{Comparison with GD}
\label{sec:exp_one}
We compare GD \citep{vankov2024optimizing,tyurin2024toward} and AGD (Algorithm~\ref{alg:main_new}) on the function $f \,:\, \R^2 \to \R$ defined as $f(x,y) = e^x + e^{1-x} + \frac{\mu}{2} y^2,$ where $\mu = 0.001.$ This function is $(3.3 + \mu, 1)$--smooth and has its minimum at $(0.5, 0).$ Starting at $x^0 = (-6, -5)$, and taking $\bar{R} = 100 \gg R$ and $\Gamma_0 = 100 \gg \nicefrac{2 \Delta}{R^2}$ (large enough) in Algorithm~\ref{alg:main_new}, we obtain Figure~\ref{fig:fig2}. In this plot, we observe the distinctive accelerated convergence rate of Algorithm~\ref{alg:main_new} with non-monotonic behavior, supporting our theoretical results.

\begin{figure}[H]
  \centering
  \includegraphics[width=0.5\textwidth]{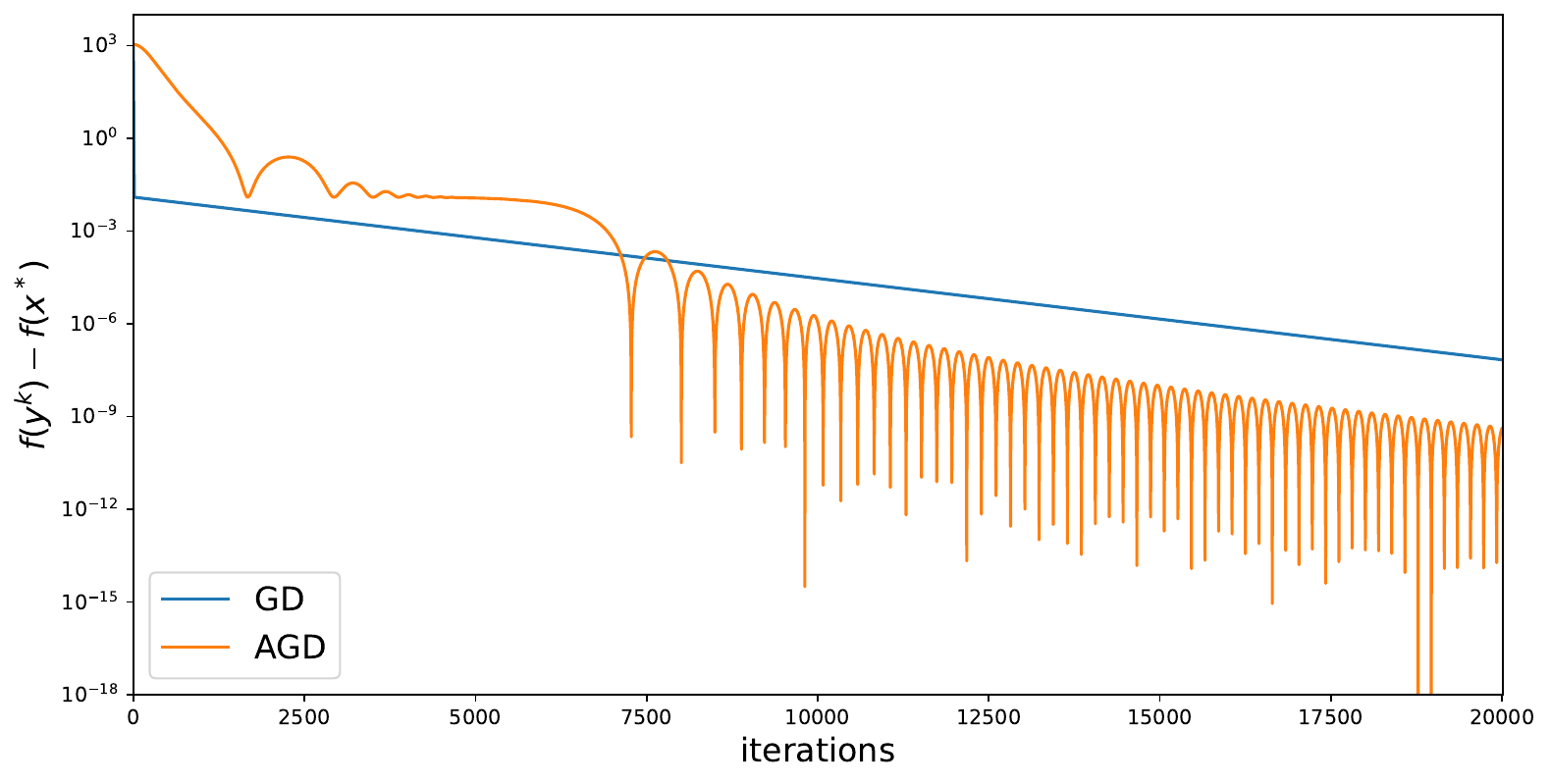}
  \caption{Experiment with $e^x + e^{1-x} + \frac{\mu}{2} y^2$ and $\mu = 0.001$}
  \label{fig:fig2}
\end{figure}

\subsection{Comparison with previous AGD methods}
Using the same function and setup, we compare our Algorithm~\ref{alg:main_new} with previous accelerated methods in Figure~\ref{fig:fig3}. For all methods, we choose parameter values according to the theorems in their respective papers. Notice that AGD by \citet{vankov2024optimizing} requires a method that solves an auxiliary problem. To solve this problem, we use binary search with $10$ and $100$ steps. In Figure~\ref{fig:fig3}, we observe very different behaviors across the methods. AGD by \citet{li2024convex} has the slowest convergence since their method chooses a small step size. The method by \citet{vankov2024optimizing} is sensitive to the number of inner steps used to solve the auxiliary problem: with only inner step $10$ steps, it converges slowly. At the beginning, the method by \citet{gorbunov2024methods} has the fastest convergence, while our method performs better at lower accuracies.

\begin{figure}[H]
  \centering
  \includegraphics[width=0.5\textwidth]{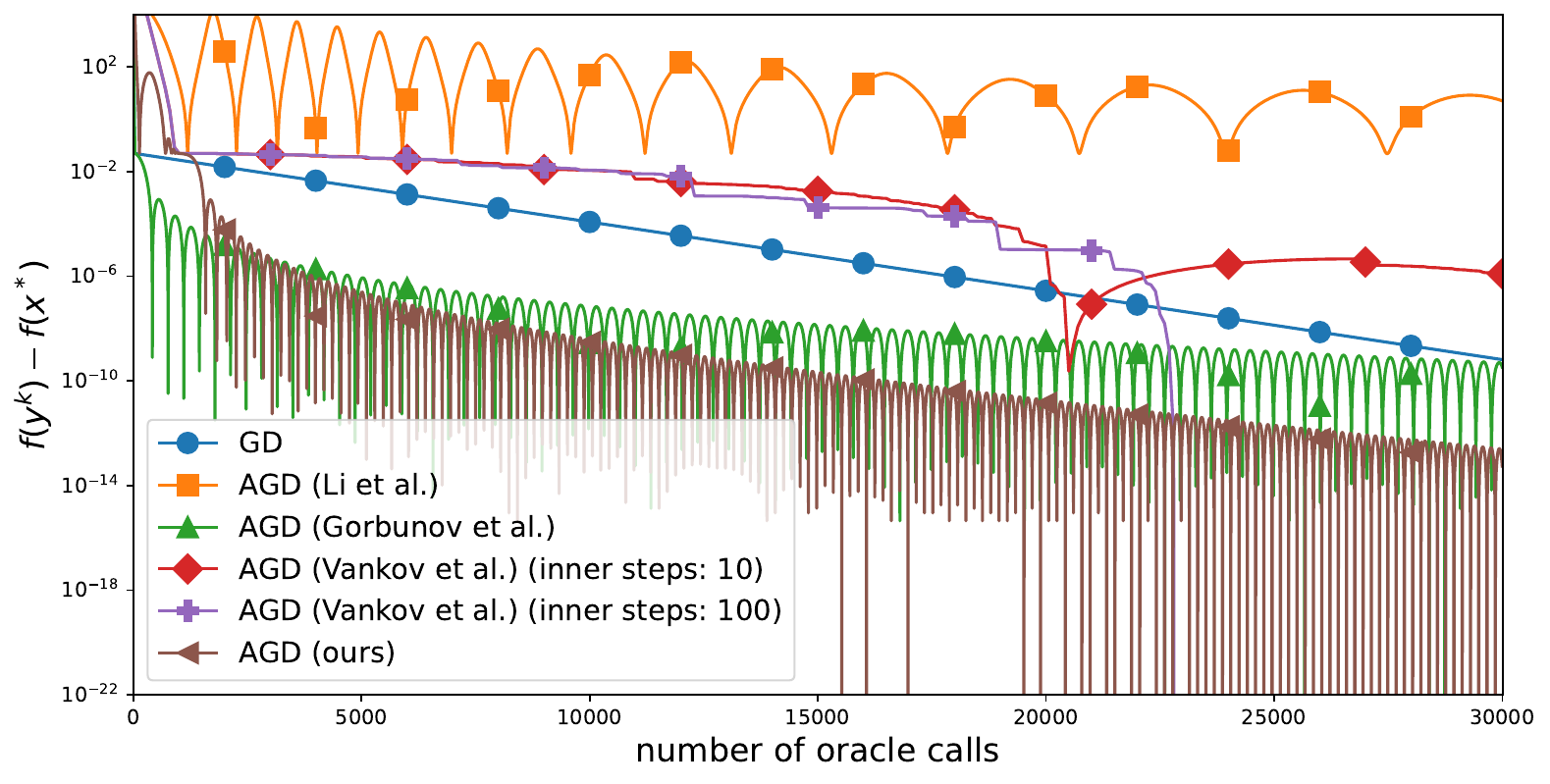}
  \caption{Experiment with $e^x + e^{1-x} + \frac{\mu}{2} y^2$ and $\mu = 0.001$}
  \label{fig:fig3}
\end{figure}

\subsection{Sensitivity to the choice of $\bar{R}$ and $\Gamma_0$}
We now also check how sensitive our algorithm is to the choice of $\bar{R}$ and $\Gamma_0$. In Figures~\ref{fig:fig_increasing_Gamma0} and \ref{fig:fig_increasing_R}, we fix the theoretically best values and increase them by $5\times$ and $25\times$. We observe that the algorithm is not very sensitive to the choice of $\Gamma_0,$ but more sensitive to the choice of $\bar{R},$ which is expected since $\Gamma_0$ is under the logarithms in \eqref{eq:jKlCMcGdKddup}, while $\bar{R}$ is not.

\begin{figure}[H]
  \centering
  \includegraphics[width=0.5\textwidth]{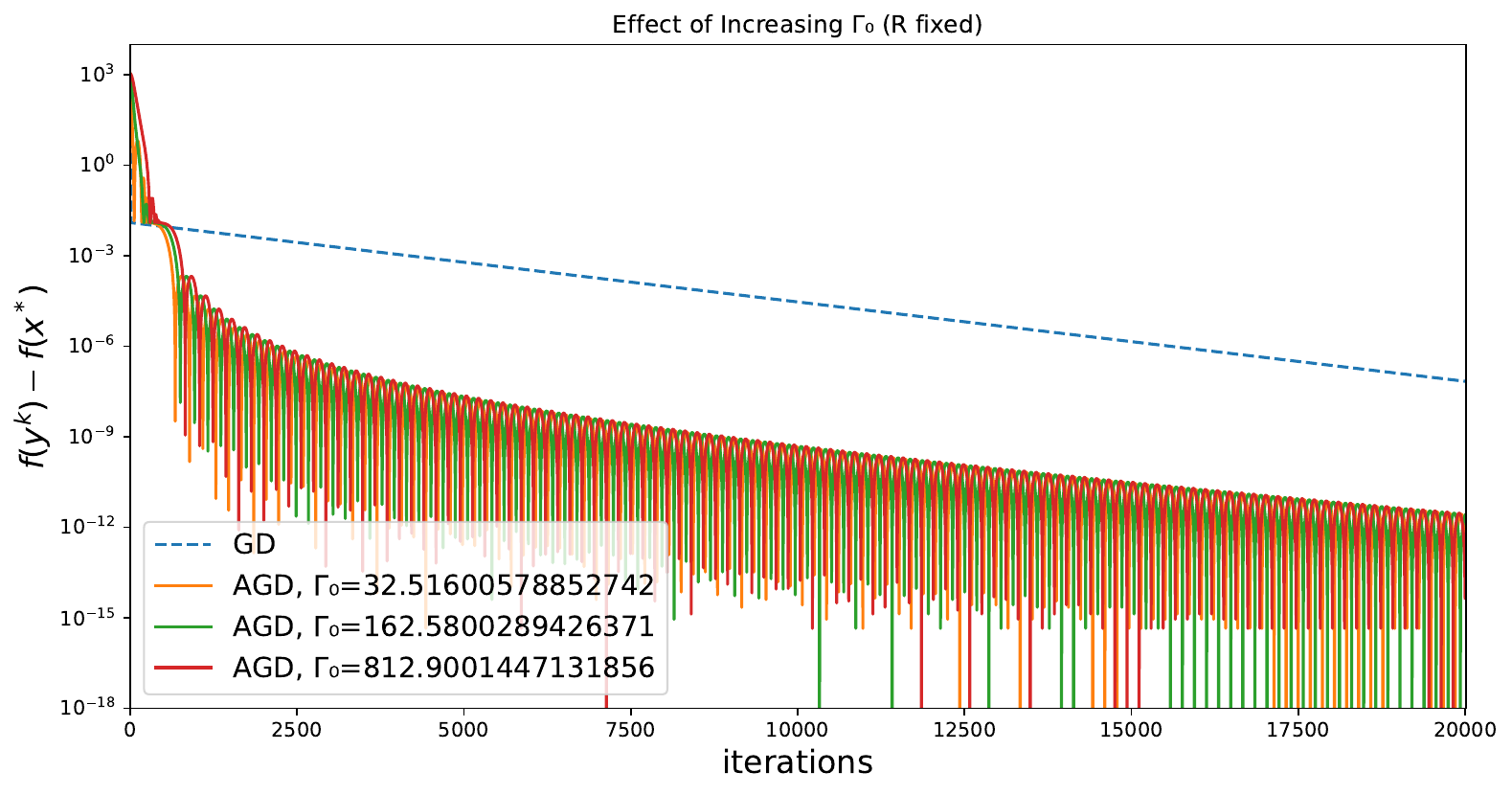}
  \caption{Sensitivity to increasing $\Gamma_0$ by $5\times$ and $25\times$.}
  \label{fig:fig_increasing_Gamma0}
\end{figure}

\begin{figure}[H]
  \centering
  \includegraphics[width=0.5\textwidth]{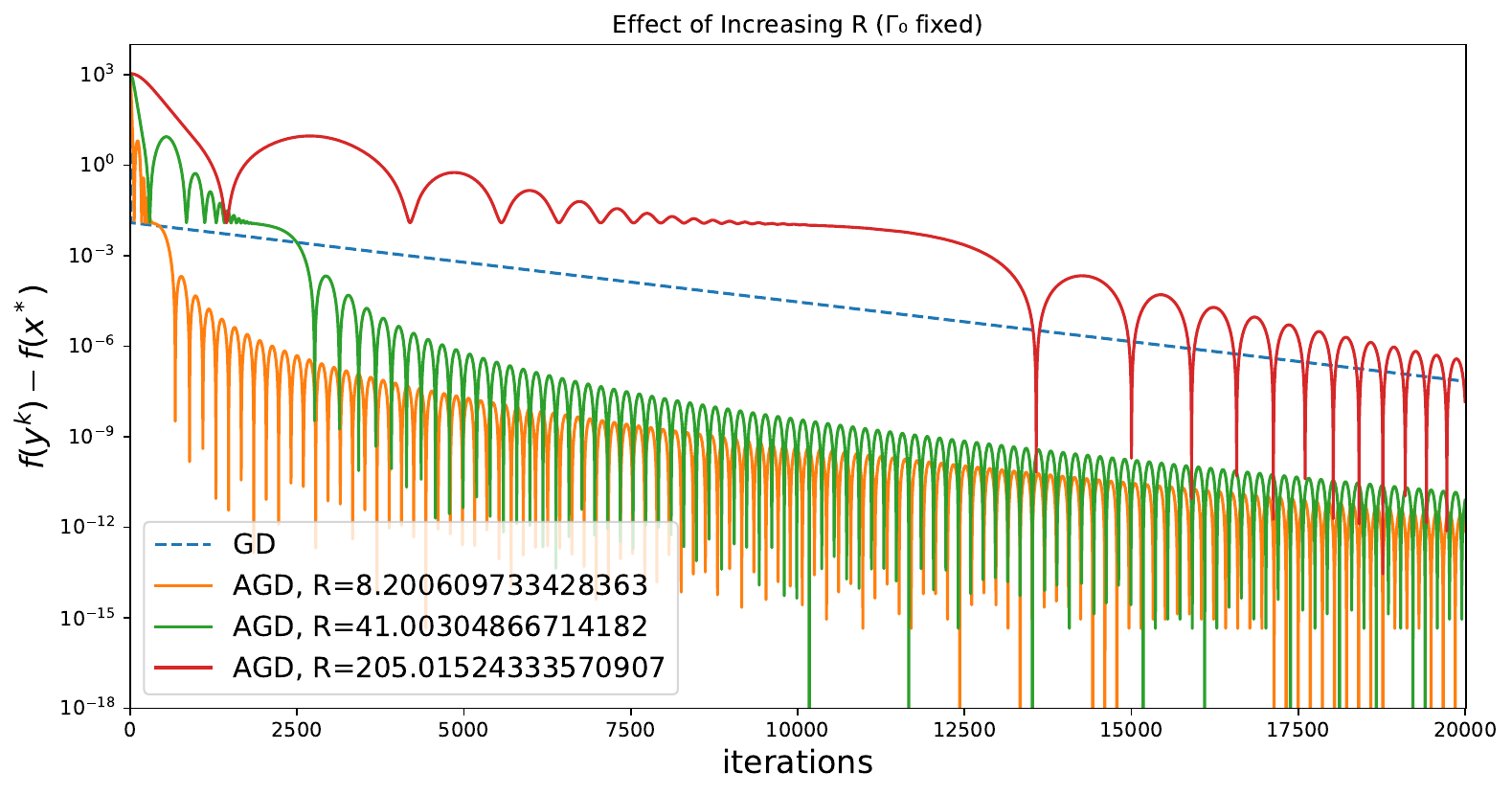}
  \caption{Sensitivity to increasing $\bar{R}$ by $5\times$ and $25\times$.}
  \label{fig:fig_increasing_R}
\end{figure}

\subsection{Experiments with Algorithm~\ref{alg:main} and non-monotonic $\psi$}
% max sqrt(((1/Sqrt(1 - x) - 1/Sqrt(x))/2)^2 + 0.001^2 * y^2), (x - 0.5)^2 + y^2 <= 1
We now consider Algorithm~\ref{alg:main} and the results from Section~\ref{sec:super}. We take the function $f \,:\, \R^2 \to \R$ defined as $f(x,y) = -\sqrt{x} - \sqrt{1 - x} + \frac{\mu}{2} y^2,$ where $\mu = 0.001,$ which is $(3, 4, 10)$–smooth. We start at $x^0 = (0.3, -0.15)$ and take $\bar{R} = R$ in Algorithm~\ref{alg:main}. Unlike Algorithm~\ref{alg:main_new}, we have to choose $\delta.$ We can take $M_{\bar{R}} = 4.47 \geq \max_{f(x) - f(x^*) \leq \Delta, \norm{x - x^*} \leq 2 \bar{R}} \norm{\nabla f(x)},$ which we estimated numerically. Then, we choose $\delta$ according to \eqref{eq:asdasdasfas}, where the latter choice was derived for $(\rho, L_0, L_1)$–smooth functions. The results are presented in Figure~\ref{fig:fig_non_monotonic}. In practice, we observe that the required number of GD steps is small, less than $ 10$, and thus the GD iterations in Algorithm~\ref{alg:main} are almost invisible in the plot. Similarly to Section~\ref{sec:exp_one}, AGD converges non-monotonically faster than GD.

\begin{figure}[H]
  \centering
  \includegraphics[width=0.5\textwidth]{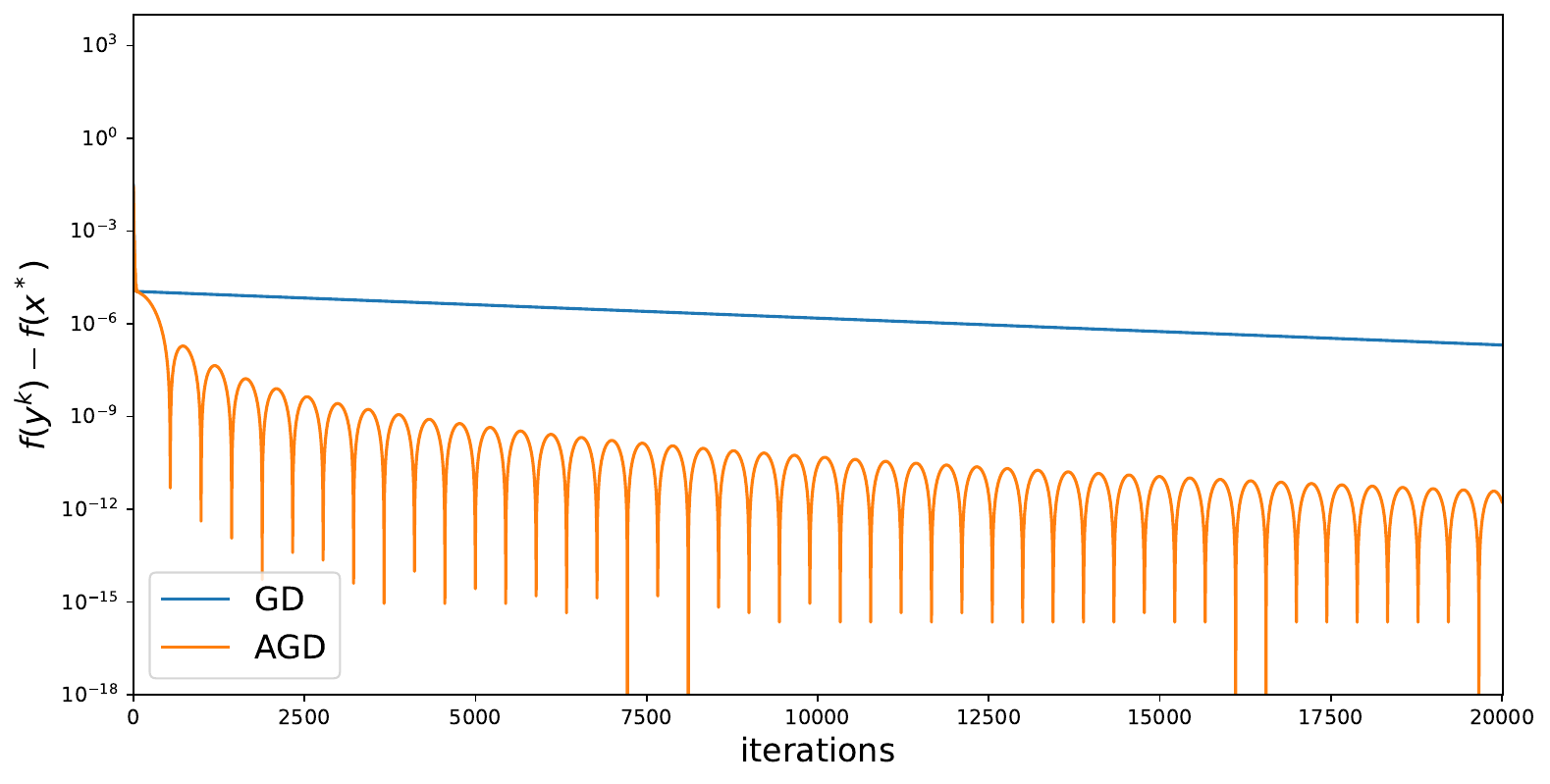}
  \caption{Experiment with $-\sqrt{x} - \sqrt{1 - x} + \frac{\mu}{2} y^2$ and $\mu = 0.001$}
  \label{fig:fig_non_monotonic}
\end{figure}

\subsection{Experiments with classifaction problem}

We conduct additional experiments on MNIST \citep{lecun2010mnist} multiclass logistic regression with a cubic regularizer $+ \lambda \|\cdot\|^3,$ where $\lambda = 0.0001.$ We include the cubic regularizer to ensure that the objective is not $L$-smooth but $(L_0, L_1)$-smooth. We evaluate all methods on MNIST, where each method optimizes the same objective with $L_1 = 1$, and $L_0$ is set to the logistic smoothness bound plus $1$, which guarantees that the objective is $(L_0, L_1)$-smooth. In Algorithm~\ref{alg:main_new}, we take $\bar{R} = 0.1$ and $\Gamma_0 = 1.$

We report both training loss and test accuracy trajectories in Figure~\ref{fig:ml}. The experimental results are consistent with our theory, and we observe that our method demonstrates good practical performance. It is also worth mentioning that the method of \citet{gorbunov2024methods} exhibits strong practical performance, despite having weaker theoretical guarantees.

\begin{figure}[h]
  \centering
  \includegraphics[width=\textwidth]{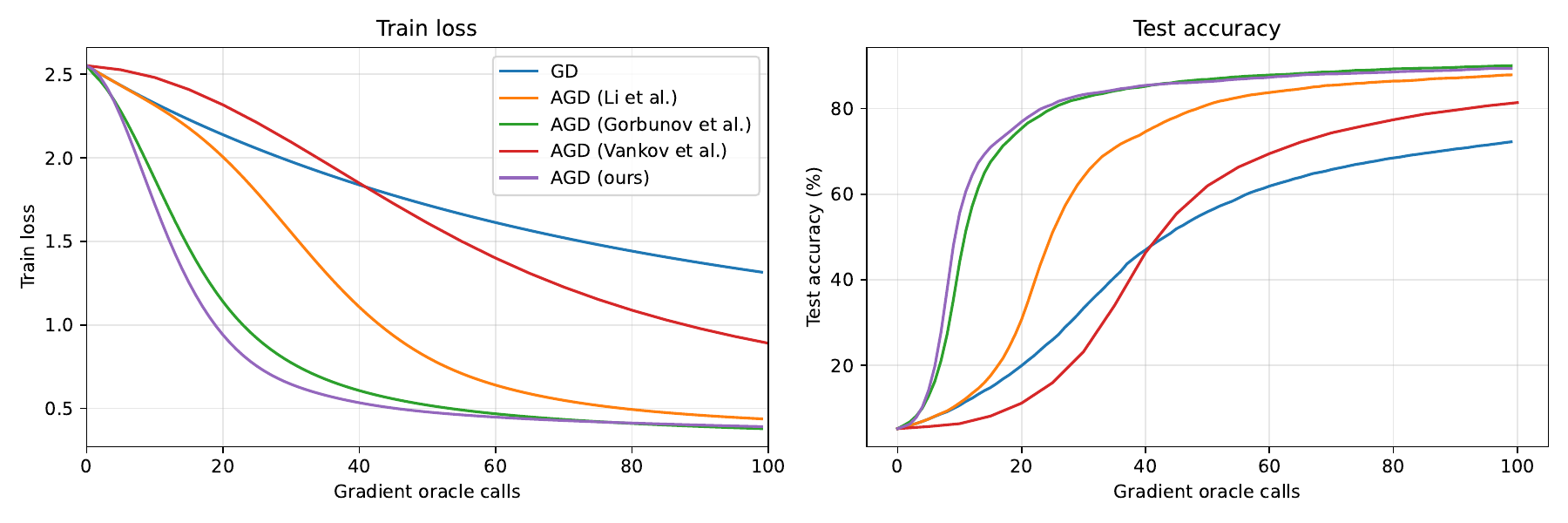}
  \caption{Experiment with logistic regression and cubic regularizer.}
  \label{fig:ml}
\end{figure}

\newpage
\section{Auxiliary Lemmas}

In the proofs, we use the following useful lemma from \citep{tyurin2024toward}, which generalizes the key inequality from Theorem 2.1.5 of \citep{nesterov2018lectures}.
\begin{lemma}[\citet{tyurin2024toward}]
  \label{lemma:smooth_convex}
  For all $x, y \in \mathcal{X},$ if $f$ is $\ell$--smooth (Assumption~\ref{ass:gen_smooth}) and convex (Assumption~\ref{ass:convex}), then
  \begin{align}
    \label{eq:gen_1}
    \norm{\nabla f(x) - \nabla f(y)}^2 \int_{0}^{1} \frac{1 - v}{\ell(\norm{\nabla f(x)} + \norm{\nabla f(x) - \nabla f(y)} v)} d v \leq f(x) - f(y) - \inp{\nabla f(y)}{x - y}.
  \end{align}
\end{lemma}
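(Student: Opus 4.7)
The plan is to adapt Nesterov's classical descent argument for $L$-smooth convex functions (Theorem~2.1.5 of \citet{nesterov2018lectures}) to the generalized setting, with Lemma~\ref{lemma:first_alt} controlling the growth of $\norm{\nabla f}$ along the descent ray. First, I would introduce the auxiliary function $\phi_y(z) \eqdef f(z) - \inp{\nabla f(y)}{z - y}$, which is convex, has $\nabla \phi_y(z) = \nabla f(z) - \nabla f(y)$, and attains its global minimum at $y$ by first-order optimality. Because $\phi_y(x) - \phi_y(y) = f(x) - f(y) - \inp{\nabla f(y)}{x - y}$, the task reduces to lower bounding $\phi_y(x) - \phi_y(y)$ by the stated integral. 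The case $\nabla f(x) = \nabla f(y)$ is trivial (both sides equal zero after using convexity), so I assume $G \eqdef \norm{\nabla f(x) - \nabla f(y)} > 0$ and set $d \eqdef (\nabla f(x) - \nabla f(y))/G$.

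Second, I would study the one-dimensional slice $\xi(t) \eqdef \phi_y(x - t d)$. A short computation gives $\xi'(0) = -G$ and $\xi''(s) \leq \norm{\nabla^2 f(x - sd)} \leq \ell(\norm{\nabla f(x - sd)})$ by Assumption~\ref{ass:gen_smooth}, so Taylor's theorem with integral remainder yields
\begin{align*}
\xi(t) \;\leq\; \xi(0) - G t + \int_0^t (t - s)\, \ell(\norm{\nabla f(x - s d)})\, ds.
\end{align*}
Lemma~\ref{lemma:first_alt} bounds $\norm{\nabla f(x - sd)} \leq \norm{\nabla f(x)} + q^{-1}(s; \norm{\nabla f(x)})$, and since $\ell$ is non-decreasing the integrand is at most $\ell(\norm{\nabla f(x)} + q^{-1}(s; \norm{\nabla f(x)}))$. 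The key simplification is the substitution $u = q^{-1}(s; \norm{\nabla f(x)})$: it turns $ds$ into $du/\ell(\norm{\nabla f(x)} + u)$, which exactly cancels the $\ell$ factor, and then a Fubini swap (rewriting $\int_0^U q(u; \norm{\nabla f(x)})\, du$ as $\int_0^U (U - v)\, dv/\ell(\norm{\nabla f(x)} + v)$) collapses the remainder to $\int_0^U u\, du/\ell(\norm{\nabla f(x)} + u)$ with $U \eqdef q^{-1}(t; \norm{\nabla f(x)})$. Combining this with $-Gt = -G \int_0^U du/\ell(\norm{\nabla f(x)} + u)$ rewrites the upper bound as
\begin{align*}
\xi(t) \;\leq\; \xi(0) - \int_0^U \frac{G - u}{\ell(\norm{\nabla f(x)} + u)}\, du.
\end{align*}

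Finally, since $y$ globally minimizes $\phi_y$, one has $\phi_y(y) \leq \xi(t)$ for every admissible $t$, so maximizing the resulting lower bound over $U \geq 0$ picks $U = G$ (the integrand is nonnegative on $[0,G]$ and nonpositive beyond); the substitution $u = G w$ then rewrites the integral as $G^2 \int_0^1 (1 - w)/\ell(\norm{\nabla f(x)} + G w)\, dw$, which is exactly the claim. I expect the main technical obstacle to be identifying the change of variables $u = q^{-1}(s; \norm{\nabla f(x)})$ and verifying that the Taylor remainder telescopes cleanly to $\int_0^U u/\ell(\norm{\nabla f(x)} + u)\, du$; a minor domain point also needs attention, namely that the segment $\{x - s d : s \in [0, t]\}$ must lie in $\mathcal{X}$ for $t$ up to $q(G; \norm{\nabla f(x)})$ so that Lemma~\ref{lemma:first_alt} applies, which I would handle by restricting the argument to sub-intervals inside $\mathcal{X}$ and passing to the limit via continuity of $f$ on the closure of $\mathcal{X}$.
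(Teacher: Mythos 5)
The paper does not prove this lemma at all: it is imported verbatim from \citet{tyurin2024toward}, so there is no in-paper argument to compare against. Your proposal is a correct, self-contained derivation, and it is the natural generalization of Nesterov's Theorem~2.1.5 argument that the cited source follows in spirit: minimize $\phi_y(z)=f(z)-\inp{\nabla f(y)}{z-y}$, descend from $x$ along the normalized direction $d=(\nabla f(x)-\nabla f(y))/G$, bound the Taylor remainder through Assumption~\ref{ass:gen_smooth} and Lemma~\ref{lemma:first_alt}, and the substitution $s=q(u;\norm{\nabla f(x)})$ indeed collapses the remainder to $\int_0^U \frac{u\,du}{\ell(\norm{\nabla f(x)}+u)}$, after which $U=G$ (equivalently $t=q(G;\norm{\nabla f(x)})$) gives exactly the stated integral. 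Two small tightenings: the "maximize over $U$" step is unnecessary, since you may simply evaluate at $U=G$; and the domain concern you flag is handled cleanly by Lemma~\ref{lemma:aux_2} rather than a limiting argument, because $q(G;\norm{\nabla f(x)})<q_{\max}(\norm{\nabla f(x)})=\int_0^\infty \frac{dv}{\ell(\norm{\nabla f(x)}+v)}$, so the entire segment $\{x-sd:\,0\le s\le q(G;\norm{\nabla f(x)})\}$ stays in $\mathcal{X}$ and both the second-order Taylor expansion and Lemma~\ref{lemma:first_alt} apply directly.
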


The following lemma ensures that it is ``safe'' to take steps with proper step sizes.
\begin{lemma}[\citet{tyurin2024toward}]
  \label{lemma:aux_2}
  Under Assumption~\ref{ass:gen_smooth}, for a fixed $x \in \mathcal{X},$ the point $y = x + t h \in \mathcal{X}$ for all $t \in \left[0, \int_{0}^{\infty} \frac{d v}{\ell(\norm{\nabla f(x)} + v)}\right)$ and $h \in \R^d$ such that $\norm{h} = 1.$
\end{lemma}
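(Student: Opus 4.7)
My plan is to run a maximal-extension argument combined with a scalar ODE comparison. Set $t_{\max} \eqdef \int_{0}^{\infty}\frac{dv}{\ell(\norm{\nabla f(x)}+v)}$ and let $T^{*}$ be the supremum of those $T \in [0, t_{\max}]$ for which $x + sh \in \mathcal{X}$ for all $s \in [0, T]$. Since $x \in \mathcal{X}$ and $\mathcal{X}$ is open, $T^{*} > 0$; the target is $T^{*} = t_{\max}$. I would assume $T^{*} < t_{\max}$ for contradiction and aim to show that $x + T^{*} h$ is still in $\mathcal{X}$, which, by openness, would let the segment extend strictly past $T^{*}$ and contradict the definition of the supremum.

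On $[0, T^{*})$ the curve $\phi(t) \eqdef \nabla f(x + th)$ is $C^{1}$ by Assumption~\ref{ass:gen_smooth}. Setting $G(t) \eqdef \norm{\phi(t)}$ and using $\norm{h} = 1$, I would derive $\norm{\phi'(t)} = \norm{\nabla^{2} f(x+th)\, h} \leq \ell(G(t))$, and hence the Dini-derivative bound $D^{+}G(t) \leq \norm{\phi'(t)} \leq \ell(G(t))$. The next ingredient is the auxiliary scalar IVP $\tilde{G}'(t) = \ell(\tilde{G}(t))$ with $\tilde{G}(0) = G(0) = \norm{\nabla f(x)}$, which is well posed because $\ell$ is positive and locally Lipschitz. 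Separation of variables shows its maximal existence time equals $\int_{G(0)}^{\infty} du/\ell(u) = t_{\max}$. The standard scalar comparison theorem then yields $G(t) \leq \tilde{G}(t) \leq \tilde{G}(T^{*}) < \infty$ uniformly on $[0, T^{*})$.

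To close the argument, I would use this uniform gradient bound to control $f$ across the endpoint. By the fundamental theorem of calculus and Cauchy--Schwarz, $|f(x+sh) - f(x)| \leq \int_{0}^{s}\norm{\nabla f(x+uh)}\, du \leq T^{*}\,\tilde{G}(T^{*})$ for every $s \in [0, T^{*})$, so $f(x + sh)$ stays bounded as $s \uparrow T^{*}$. Since $f$ is continuous on $\bar{\mathcal{X}}$ and $\mathcal{X} = \{f < \infty\}$, the limiting value $f(x + T^{*} h)$ is therefore finite, placing $x + T^{*} h$ inside $\mathcal{X}$. Openness of $\mathcal{X}$ then supplies an open ball around $x + T^{*} h$ contained in $\mathcal{X}$, extending the segment beyond $T^{*}$ and contradicting the choice of $T^{*}$.

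The step I expect to be the main obstacle is making $D^{+}G(t) \leq \ell(G(t))$ fully rigorous on the (possibly nonempty) zero set of $G$, where the Euclidean norm fails to be differentiable. The clean way to handle this is to work with the upper right Dini derivative throughout and rely on the always-valid inequality $D^{+}\norm{\phi(t)} \leq \norm{\phi'(t)}$, after which the classical Carath\'eodory comparison theorem for scalar ODEs with a positive, locally Lipschitz right-hand side applies with no additional smoothness requirement on $G$.
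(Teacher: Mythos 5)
Your proposal is correct, though note that the paper itself offers no proof of this lemma: it is imported verbatim from \citet{tyurin2024toward}, so there is nothing in this manuscript to compare line by line. Your continuation-plus-comparison argument is a sound self-contained reconstruction and matches the spirit of the surrounding machinery: the scalar majorant $\tilde{G}$ with $\tilde{G}' = \ell(\tilde{G})$, $\tilde{G}(0) = \norm{\nabla f(x)}$, has blow-up time exactly $q_{\max}(\norm{\nabla f(x)}) = \int_{0}^{\infty} \frac{dv}{\ell(\norm{\nabla f(x)}+v)}$, which is precisely the quantity $q$, $q_{\max}$ encode in Lemma~\ref{lemma:first_alt}, and your Dini-derivative bound $D^{+}\norm{\phi(t)} \le \norm{\phi'(t)} \le \ell(\norm{\phi(t)})$ together with the standard comparison theorem (uniqueness holds since $\ell$ is locally Lipschitz) gives the uniform gradient bound on $[0,T^{*})$ when $T^{*} < t_{\max}$. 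The endpoint step is also legitimate: boundedness of $f$ along the segment via the fundamental theorem of calculus, continuity of $f$ on the closure of $\mathcal{X}$, and the standing definition $\mathcal{X} = \{x : f(x) < \infty\}$ (assumed open) force $x + T^{*}h \in \mathcal{X}$, and openness then contradicts maximality of $T^{*}$. Two cosmetic remarks: Assumption~\ref{ass:gen_smooth} gives only twice differentiability, not $C^{2}$, so $\phi(t) = \nabla f(x+th)$ is merely differentiable rather than $C^{1}$ — but your Dini-derivative formulation needs only differentiability of $\phi$ and continuity of $\norm{\phi}$, so nothing breaks; and the comparison result you want is the classical differential-inequality theorem (e.g.\ Hartman or Walter) rather than specifically a Carath\'eodory-type statement, a naming quibble with no mathematical consequence.
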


We now prove two important lemmas that allow us to bound the norm $\norm{\nabla f(y)}$ given an upper bound on $f(y) - f(x^*)$.

\begin{restatable}{lemma}{LEMMABOUND}[Strictly Increasing $\psi$]
  \label{lemma:func_to_grad_spec}
  Under Assumptions~\ref{ass:gen_smooth} and \ref{ass:convex}, let $f(y) - f(x^*) \leq \delta$ for some $y \in \mathcal{X}, \delta > 0$ and $\psi \,:\, \R_{+} \to \R_{+}$ such that $\psi(x) = \frac{x^2}{2 \ell(4 x)}$ is strictly increasing, then $\norm{\nabla f(y)} \leq \psi^{-1}(\delta)$ if $\delta \in \textnormal{im} (\psi).$
\end{restatable}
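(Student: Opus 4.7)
The plan is to prove the inequality $\psi(\norm{\nabla f(y)}) \le \delta$ directly, and then invert using monotonicity of $\psi$. The key tool is Lemma~\ref{lemma:smooth_convex}, which relates gradient differences to function gaps via an $\ell$-weighted integral.

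First, I would apply Lemma~\ref{lemma:smooth_convex} with the roles of its two variables taken to be $x \leftarrow y$ and $y \leftarrow x^*$ (assuming, as the rest of the paper implicitly does, that the inequality extends to $x^*$ by the continuity stipulated in Assumption~\ref{ass:gen_smooth}; otherwise one takes $x^*_n \in \mathcal{X}$ with $x^*_n \to x^*$ and passes to the limit). Since $\nabla f(x^*) = 0$, the right-hand side collapses to $f(y) - f(x^*)$ and we obtain
\begin{align*}
\norm{\nabla f(y)}^2 \int_{0}^{1} \frac{1 - v}{\ell\bigl(\norm{\nabla f(y)} + \norm{\nabla f(y)}\, v\bigr)} \, dv \;\le\; f(y) - f(x^*) \;\le\; \delta.
\end{align*}

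Next, I would lower-bound the integral. Because $\ell$ is non-decreasing and $1 + v \le 2 \le 4$ for all $v \in [0,1]$, we have $\ell(\norm{\nabla f(y)}(1+v)) \le \ell(4\norm{\nabla f(y)})$, and hence
\begin{align*}
\int_{0}^{1} \frac{1 - v}{\ell(\norm{\nabla f(y)}(1+v))} \, dv \;\ge\; \frac{1}{\ell(4\norm{\nabla f(y)})} \int_{0}^{1} (1-v)\, dv \;=\; \frac{1}{2\ell(4\norm{\nabla f(y)})}.
\end{align*}
Combining this with the previous display yields $\psi(\norm{\nabla f(y)}) = \frac{\norm{\nabla f(y)}^2}{2\ell(4\norm{\nabla f(y)})} \le \delta$.

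Finally, I would use that $\psi$ is strictly increasing on $\R_+$ with $\psi(0)=0$, so its inverse $\psi^{-1}$ is well defined on $\textnormal{im}(\psi)$. Since $\delta \in \textnormal{im}(\psi)$, we have $\delta = \psi(\psi^{-1}(\delta))$, and strict monotonicity of $\psi$ together with $\psi(\norm{\nabla f(y)}) \le \psi(\psi^{-1}(\delta))$ gives $\norm{\nabla f(y)} \le \psi^{-1}(\delta)$, as claimed.

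I do not foresee a real obstacle: the only subtle point is invoking Lemma~\ref{lemma:smooth_convex} at the minimizer $x^*$, which may lie on $\partial \mathcal{X}$; this is handled by continuity on $\overline{\mathcal{X}}$ via a limiting argument. The inequality $1 + v \le 4$ is deliberately loose to match the definition $\psi(x) = x^2/(2\ell(4x))$; any factor $\ge 2$ would work for the integral, and the choice of $4$ is the one used throughout Section~\ref{sec:proof_sketch} to absorb the gradient at the next iterate as well.
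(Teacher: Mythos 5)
Your proof is correct and follows essentially the same route as the paper: apply Lemma~\ref{lemma:smooth_convex} at $(y, x^*)$ with $\nabla f(x^*) = 0$, lower-bound the integral via monotonicity of $\ell$ using $1+v \le 4$ to obtain $\psi(\norm{\nabla f(y)}) \le \delta$, then invert $\psi$. Your extra care about $x^*$ possibly lying on $\partial\mathcal{X}$ (handled by a limiting argument) and the explicit use of $\delta \in \textnormal{im}(\psi)$ only make the argument slightly more complete than the paper's terse version.
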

\begin{proof}
  Using Lemma~\ref{lemma:smooth_convex} and the fact that $\ell$ is non-decreasing,
  \begin{align*}
    \delta 
    &\geq f(y) - f(x^*) 
    \geq \norm{\nabla f(y)}^2 \int_{0}^{1} \frac{1 - v}{\ell(\norm{\nabla f(y)} + \norm{\nabla f(y)} v)} d v \\
    &\geq \frac{\norm{\nabla f(y)}^2}{2 \ell(4 \norm{\nabla f(y)})} = \psi\left(\norm{\nabla f(y)}\right).
  \end{align*}
  It is left to invert $\psi$ to get the result.
\end{proof}

\begin{restatable}{lemma}{LEMMABOUNDDIFF}[Not Necessarily Strictly Increasing $\psi$]
  \label{lemma:func_to_grad_spec_diff}
  Under Assumptions~\ref{ass:gen_smooth} and \ref{ass:convex}, let $\psi \,:\, \R_{+} \to \R_{+}$ such that $\psi(x) = \frac{x^2}{2 \ell(4 x)}$ is not necessarily strictly increasing.
  \begin{enumerate}
    \item There exists the largest $\Delta_{\max} \in (0, \infty]$ such that $\psi$ is strictly increasing on $[0, \Delta_{\max})$,
    \item For all $\delta \in [0, \psi(\Delta_{\max})),$ there exists the unique $\Delta_{\textnormal{left}}(\delta) \in [0, \Delta_{\max})$ and the smallest\footnote{if the set $\{x \in [\Delta_{\max}, \infty) \,:\, \psi(x) = \delta\}$ is empty, then $\Delta_{\textnormal{right}}(\delta) = \infty$} $\Delta_{\textnormal{right}}(\delta) \in [\Delta_{\max}, \infty]$ such that $\psi(\Delta_{\textnormal{left}}(\delta)) = \delta$ and $\psi(\Delta_{\textnormal{right}}(\delta)) = \delta.$ 
    \item For all $\delta \in [0, \psi(\Delta_{\max})),$ if $\Delta_{\textnormal{right}}(\delta) < \infty$ and $\delta > \bar{\delta} \geq 0$, then $\Delta_{\textnormal{right}}(\bar{\delta}) > \Delta_{\textnormal{right}}(\delta).$
    \item If $f(y) - f(x^*) \leq \delta$ for some $y \in \mathcal{X}$ and $\delta \in [0, \psi\left(\Delta_{\max}\right)),$ then either $\norm{\nabla f(y)} \leq \Delta_{\textnormal{left}}(\delta)$ or $\norm{\nabla f(y)} \geq \Delta_{\textnormal{right}}(\delta).$
  \end{enumerate}
\end{restatable}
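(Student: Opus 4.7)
The four claims are almost independent: claims (1)--(3) are purely about the continuous real function $\psi(x)=x^2/(2\ell(4x))$ on $\R_+$, while claim (4) reduces, via Lemma~\ref{lemma:smooth_convex}, to an application of (1)--(3). The plan is therefore to first establish the structural facts about $\psi$ and then plug in the convexity estimate to get the dichotomy for $\norm{\nabla f(y)}$. Note that by Assumption~\ref{ass:gen_smooth}, $\ell$ is positive, non-decreasing, and locally Lipschitz, so $\psi$ is continuous on $\R_+$ with $\psi(0)=0$; this will be the only regularity I need.

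\textbf{Claim (1): positivity of $\Delta_{\max}$.} The step I expect to require the most care is showing that the set $\mathcal{S}\eqdef\{\Delta>0 : \psi \text{ strictly increasing on }[0,\Delta)\}$ is nonempty. For $0\le x<y$ small, I would write
\[
\psi(y)-\psi(x)=\frac{y^2-x^2}{2\ell(4y)}+\frac{x^2(\ell(4x)-\ell(4y))}{2\ell(4x)\ell(4y)}\ge \frac{y-x}{2\ell(4y)}\!\left[(y+x)-\frac{4L x^2}{\ell(4x)}\right],
\]
where $L$ is a local Lipschitz constant for $\ell$ on $[0,4\epsilon]$ for a small $\epsilon$. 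Since $\ell(4x)\to\ell(0)>0$ as $x\downarrow 0$, the bracket is positive for all $0\le x<y<\epsilon'$ with $\epsilon'$ sufficiently small, which puts $\epsilon'\in\mathcal{S}$. Then I set $\Delta_{\max}\eqdef\sup\mathcal{S}\in(0,\infty]$; strict monotonicity on $[0,\Delta_{\max})$ follows since any two points there are covered by some $\Delta\in\mathcal{S}$, and maximality is immediate from the supremum definition.

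\textbf{Claims (2)--(3): the two crossings of level $\delta$.} Fix $\delta\in[0,\psi(\Delta_{\max}))$. By strict monotonicity and continuity, $\psi$ maps $[0,\Delta_{\max})$ homeomorphically onto $[0,\psi(\Delta_{\max}))$, which yields the unique $\Delta_{\textnormal{left}}(\delta)\in[0,\Delta_{\max})$. For $\Delta_{\textnormal{right}}(\delta)$, I would consider $S_\delta\eqdef\{x\in[\Delta_{\max},\infty):\psi(x)=\delta\}$, which is closed in $[\Delta_{\max},\infty)$ by continuity; if $S_\delta=\emptyset$ set $\Delta_{\textnormal{right}}(\delta)=\infty$, otherwise take its infimum, which is attained by closedness. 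For (3), I would first argue that $\psi>\delta$ on $[\Delta_{\max},\Delta_{\textnormal{right}}(\delta))$: continuity gives $\psi(\Delta_{\max})=\lim_{x\uparrow \Delta_{\max}}\psi(x)>\delta$, $\psi$ never equals $\delta$ on the half-open interval by minimality of $\Delta_{\textnormal{right}}(\delta)$, so the intermediate value theorem forbids $\psi<\delta$ anywhere on it. Then for $\bar\delta<\delta$, continuity of $\psi$ at $\Delta_{\textnormal{right}}(\delta)$ (where $\psi=\delta>\bar\delta$) provides an open neighborhood on which $\psi>\bar\delta$, pushing the first crossing with $\bar\delta$ strictly beyond $\Delta_{\textnormal{right}}(\delta)$ (and if $\bar\delta$ is never attained, $\Delta_{\textnormal{right}}(\bar\delta)=\infty$ trivially exceeds it).

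\textbf{Claim (4): the gradient dichotomy.} I would apply Lemma~\ref{lemma:smooth_convex} with $x=y$ (our point) and $y=x^*$: since $\nabla f(x^*)=0$, the inner-product term vanishes and $\norm{\nabla f(y)-\nabla f(x^*)}=\norm{\nabla f(y)}$. Using that $\ell$ is non-decreasing to bound the integrand below by $(1-v)/\ell(4\norm{\nabla f(y)})$ and integrating gives
\[
\psi(\norm{\nabla f(y)})=\frac{\norm{\nabla f(y)}^2}{2\ell(4\norm{\nabla f(y)})}\le f(y)-f(x^*)\le \delta.
\]
By (1)--(3), $\psi>\delta$ on the open interval $(\Delta_{\textnormal{left}}(\delta),\Delta_{\textnormal{right}}(\delta))$ (combining strict monotonicity on $[0,\Delta_{\max})$ with the IVT argument above on $[\Delta_{\max},\Delta_{\textnormal{right}}(\delta))$, with the convention that the interval is $(\Delta_{\textnormal{left}}(\delta),\infty)$ when $\Delta_{\textnormal{right}}(\delta)=\infty$). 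Therefore $\norm{\nabla f(y)}$ cannot lie in this open interval, giving the required dichotomy. The only subtle ingredient used here, beyond Lemma~\ref{lemma:smooth_convex}, is the monotonicity of $\ell$ inside the integrand, so the computation is routine once (1)--(3) are in hand.
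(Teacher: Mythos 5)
Your proposal is correct and follows essentially the same route as the paper: claim (1) via the local Lipschitz constant of $\ell$ together with $\ell(0)>0$ to dominate the quadratic error term near the origin, claims (2)--(3) via continuity/first-crossing arguments on $[\Delta_{\max},\infty)$, and claim (4) by applying Lemma~\ref{lemma:smooth_convex} at $x^*$ with the monotonicity of $\ell$ to get $\psi(\norm{\nabla f(y)})\leq\delta$ and then invoking the level-set structure. Your explicit supremum construction of $\Delta_{\max}$ and the closed-set/IVT details merely make precise steps the paper states tersely.
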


\begin{proof}
  1. Since $\ell$ is non-decreasing and locally Lipschitz, there exists $\bar{\Delta}_1 > 0$ such that
  \begin{align*}
    2 \ell(4 y) - 2 \ell(4 x) \leq M (y - x)
  \end{align*}
  for all $0 \leq x < y \leq \bar{\Delta}_1$ and for some $M \equiv M(\bar{\Delta}_1, \ell) > 0.$ Thus,
  \begin{align}
    \label{eq:QYtfMbLCGaG}
    x^2 2 \ell(4 y) \leq x^2 2 \ell(4 x) + M x^2 (y - x).
  \end{align}
  Moreover, there exists $\bar{\Delta}_2 > 0$ such that
  \begin{align*}
    M x^2 < (y + x) 2 \ell(4 x)
  \end{align*}
  for all $0 \leq x < y \leq \bar{\Delta}_2$ since $2 \ell(4 x) \geq \ell(0) > 0,$ the l.h.s $\cO(x^2),$ and the r.h.s. $\Omega(x).$ Combining with \eqref{eq:QYtfMbLCGaG},
  \begin{align*}
    x^2 2 \ell(4 y) < x^2 2 \ell(4 x) + 2 \ell(4 x) (y + x) (y - x) = y^2 2 \ell(4 x)
  \end{align*}
  and 
  \begin{align*}
    \frac{x^2}{2 \ell(4 x)} < \frac{y^2}{2 \ell(4 y)}
  \end{align*}
  for all $0 \leq x < y \leq \min\{\bar{\Delta}_1,\bar{\Delta}_2\},$ meaning that $\psi$ is locally strictly increasing on the interval $[0, \Delta_{\max})$ for some largest $\Delta_{\max} \in (0, \infty].$ 

  2. $\Delta_{\textnormal{left}}(\delta)$ exists since $\psi$ is locally strictly increasing on the interval $[0, \Delta_{\max}).$ On the interval $[\Delta_{\max}, \infty),$ either $\psi$ intersects $\delta$ for the first time at $\Delta_{\textnormal{right}}(\delta)$ or we can take $\Delta_{\textnormal{right}}(\delta) = \infty.$

  3. Since $\Delta_{\textnormal{right}}(\delta)$ is the first time when $\psi$ intersects $\delta$ for $x \in [\Delta_{\max}, \infty)$ and $\delta < \psi(\Delta_{\max}),$ then $\psi(x) > \delta$ for all $x \in [\Delta_{\max}, \Delta_{\textnormal{right}}(\delta)).$ Thus, if we decrease $\delta$ and take $\bar{\delta} < \delta,$ then $\Delta_{\textnormal{right}}(\bar{\delta})$ can only increase or stay the same. However, if $\Delta_{\textnormal{right}}(\bar{\delta})$ stays the same, i.e., $\Delta_{\textnormal{right}}(\bar{\delta}) = \Delta_{\textnormal{right}}(\delta),$ then $\Delta_{\textnormal{right}}(\bar{\delta})$ is the first time when $\psi$ intersects $\delta,$ which is impossible due to the continuity of $\psi$ and the fact that $\Delta_{\textnormal{right}}(\bar{\delta})$ is the first time when $\psi$ intersects $\bar{\delta} < \delta.$
  
  4. Using the same reasoning as in the proof of Lemma~\ref{lemma:func_to_grad_spec}:
  \begin{align}
    \label{eq:iqtxtpBzUBCLZuuHF}
    \delta \geq \psi\left(\norm{\nabla f(y)}\right).
  \end{align}
  Due to the previous properties, either $\norm{\nabla f(y)} \leq \Delta_{\textnormal{left}}(\delta)$ or $\norm{\nabla f(y)} \geq \Delta_{\textnormal{right}}(\delta)$ because $\psi\left(x\right) > \delta$ for all $x \in (\Delta_{\textnormal{left}}(\delta), \Delta_{\textnormal{right}}(\delta)).$
\end{proof}

\section{Rate of the Auxiliary Sequence}

\THEOREMSEQ*

\begin{proof}
  By the definition of $\Gamma_{k+1}$ and $\alpha_k,$
  \begin{align*}
    \Gamma_{k+1} \leq \frac{\Gamma_{k}}{1 + \sqrt{\gamma \Gamma_k}}
  \end{align*}
  for all $k \geq 0.$ Instead of $\Gamma_{k},$ consider the sequence $\bar{\Gamma}_{k}$ such that 
  \begin{align*}
    \bar{\Gamma}_{k+1} = \frac{\bar{\Gamma}_{k}}{1 + \sqrt{\gamma \bar{\Gamma}_k}}
  \end{align*}
  for all $k \geq 0$ and $\bar{\Gamma}_0 = \Gamma_0.$ Using mathematical induction, notice that $\bar{\Gamma}_{k+1} \geq \Gamma_{k+1}.$ Indeed, the function $\frac{x}{1 + \sqrt{\gamma x}}$ is increasing\footnote{$(\frac{x}{1 + \sqrt{\gamma x}})' = \frac{1 + \frac{\sqrt{\gamma x}}{2}}{(1 + \sqrt{\gamma x})^2} > 0$ for all $x \geq 0.$} for all $x \geq 0$ and 
  \begin{align*}
    \Gamma_{k+1} \leq \frac{\Gamma_{k}}{1 + \sqrt{\gamma \Gamma_k}} \leq \frac{\bar{\Gamma}_{k}}{1 + \sqrt{\gamma \bar{\Gamma}_k}} = \bar{\Gamma}_{k+1}
  \end{align*}
  if $\Gamma_k \leq \bar{\Gamma}_{k}.$ If we bound $\bar{\Gamma}_{k+1},$ then we can bound $\Gamma_{k+1}.$ Next,
  \begin{align*}
     \frac{1}{\bar{\Gamma}_{k+1}} - \frac{1}{\bar{\Gamma}_{k}} = \sqrt{\frac{\gamma}{\bar{\Gamma}_k}}
  \end{align*}
  Let us define $t_k \eqdef \frac{1}{\bar{\Gamma}_{k}}$ for all $k \geq 0,$ then 
  \begin{align}
    \label{eq:RKlLSXvVgDKl}
    t_{k+1} - t_k = \sqrt{\gamma t_k}.
  \end{align}
  and
  \begin{align}
    \label{eq:yyugvGKCzBdZypgh}
    (t_{k+1}^{1/2} + t_k^{1/2}) (t_{k+1}^{1/2} - t_k^{1/2}) = \sqrt{\gamma t_k}
  \end{align}
  for all $k \geq 0.$ We now fix any $k \geq 0.$ There are two options: \\
  \textbf{Option 1:} $t_k^{1/2} \leq \frac{\sqrt{\gamma}}{2}.$\\
  In this case, using \eqref{eq:RKlLSXvVgDKl},
  \begin{align*}
    t_{k+1} = t_k + \sqrt{\gamma t_k} \leq \frac{\gamma}{4} + \frac{\gamma}{2} = \frac{3 \gamma}{4}
  \end{align*}
  and 
  \begin{align*}
    2 \sqrt{\gamma} (t_{k+1}^{1/2} - t_k^{1/2}) \geq \sqrt{\gamma t_k}
  \end{align*}
  due to \eqref{eq:yyugvGKCzBdZypgh}. Rearranging the terms,
  \begin{align}
    \label{eq:NrGBV}
    t_{k+1}^{1/2} \geq \frac{3}{2} t_k^{1/2} \geq \left(\frac{3}{2}\right)^{k+1} t_0^{1/2},
  \end{align}
  where we unroll the recursion since $t_0^{1/2} \leq \dots \leq t_k^{1/2} \leq \frac{\sqrt{\gamma}}{2}.$ \\
  \textbf{Option 2:} $t_k^{1/2} > \frac{\sqrt{\gamma}}{2}.$\\
  Using \eqref{eq:RKlLSXvVgDKl},
  \begin{align*}
    t_{k+1} = t_k + \sqrt{\gamma t_k} \leq t_k + 2 t_k \leq 3 t_k
  \end{align*}
  and 
  \begin{align*}
    3 t_{k}^{1/2} (t_{k+1}^{1/2} - t_k^{1/2}) \geq \sqrt{\gamma t_k}
  \end{align*}
  due to \eqref{eq:yyugvGKCzBdZypgh}, which yields
  \begin{align}
    \label{eq:WGutGvSTvAVMNXNw}
    t_{k+1}^{1/2} \geq t_k^{1/2} + \frac{\sqrt{\gamma}}{3}.
  \end{align}
  Let $k^* \geq 0$ be the smallest index such that $t_{k^*}^{1/2} > \frac{\sqrt{\gamma}}{2}.$ Unrolling \eqref{eq:WGutGvSTvAVMNXNw},
  \begin{align}
    \label{eq:JQdlZWOsKJJzCuO}
    t_{k+1}^{1/2} \geq t_{k^*}^{1/2} + (k + 1 - k^*)\frac{\sqrt{\gamma}}{3}
  \end{align}
  for all $k \geq k^*.$
  If $k^* = 0,$ then
  \begin{align}
    t_{k+1}^{1/2} \geq (k + 1)\frac{\sqrt{\gamma}}{3}.
  \end{align}
  Otherwise, by the definition of $k^*,$
  \begin{align*}
    \left(\frac{3}{2}\right)^{k^* - 1} t_0^{1/2} \overset{\eqref{eq:NrGBV}}{\leq} t_{k^* - 1}^{1/2} \leq \frac{\sqrt{\gamma}}{2},
  \end{align*}
  which yields 
  \begin{align*}
    k^* \leq 1 + \frac{1}{2} \log_{3/2}\left(\frac{\gamma}{4 t_0}\right)
  \end{align*}
  and 
  \begin{align}
    t_{k+1}^{1/2} \geq \left(k + 1 - \left(1 + \frac{1}{2} \log_{3/2}\left(\frac{\gamma}{4 t_0}\right)\right)\right) \frac{\sqrt{\gamma}}{3},
  \end{align}
  due to \eqref{eq:JQdlZWOsKJJzCuO}. Combining the cases with $k^* = 0$ and $k^* > 0,$ we get
  \begin{align}
    t_{k+1}^{1/2} \geq \left(k + 1 - \bar{k}\right) \frac{\sqrt{\gamma}}{3}
  \end{align}
  for all $k \geq \bar{k} \eqdef \max\left\{1 + \frac{1}{2} \log_{3/2}\left(\frac{\gamma}{4 t_0}\right), 0\right\}.$ It left to recall that $t_k = 1 / \bar{\Gamma}_k$ and $\bar{\Gamma}_k \geq \Gamma_k$ for all $k \geq 0$ to obtain the result.
  
\end{proof}

\section{Main Descent Lemma}

\begin{lemma}
  \label{thm:main_lemma}
  Suppose that Assumptions~\ref{ass:gen_smooth} and \ref{ass:convex} hold. Consider Algorithm~\ref{alg:main} up to the $k$\textsuperscript{th} iteration and the following virtual steps:
  \begin{equation}
  \label{eq:TqfxanGJGnaQsmS}
  \begin{aligned}
    \alpha_k(\gamma) \equiv \alpha_{k,\gamma} &= \sqrt{\gamma \Gamma_k}, \\
    y^{k+1}(\gamma) \equiv y^{k+1}_{\gamma} &= \frac{1}{1 + \alpha_{k,\gamma}} y^{k} + \frac{\alpha_{k,\gamma}}{1 + \alpha_{k,\gamma}} u^{k} - \frac{\gamma}{1 + \alpha_{k,\gamma}} \nabla f(y^{k}), \\
    u^{k+1}(\gamma) \equiv u^{k+1}_{\gamma} &= \textnormal{Proj}_{\bar{\mathcal{X}}} \left(u^k - \frac{\alpha_{k,\gamma}}{\Gamma_k} \nabla f(y^{k+1}_{\gamma})\right), \\
    \Gamma_{k+1}(\gamma) \equiv \Gamma_{k+1, \gamma} &= \Gamma_{k} / (1 + \alpha_{k,\gamma}),
  \end{aligned}
  \end{equation}
  where $0 \leq \gamma \leq \frac{1}{\ell(2 \norm{\nabla f(y^k)})}$ is a free parameter, $y^{k} \in \mathcal{X},$ and $u^{k} \in \bar{\mathcal{X}}.$  Then, the steps \eqref{eq:TqfxanGJGnaQsmS} are well-defined, $y^{k+1}_\gamma \in \mathcal{X}$, and $u^{k+1}_{\gamma} \in \bar{\mathcal{X}},$ and
  \begin{align*}
    &(1 + \alpha_{k,\gamma})(f(y^{k+1}_{\gamma}) - f(x^*)) + \frac{(1 + \alpha_{k,\gamma}) \Gamma_{k+1, \gamma}}{2} \norm{u^{k+1}_{\gamma} - x^*}^2 - \left((f(y^{k}) - f(x^*)) + \frac{\Gamma_{k}}{2} \norm{u^{k} - x^*}^2\right) \nonumber \\
    &\leq \frac{1}{2} \left(\gamma - \frac{1}{\ell(2 \norm{\nabla f(y^{k})} + \norm{\nabla f(y^{k + 1}_{\gamma})})}\right) \norm{\nabla f(y^{k+1}_{\gamma}) - \nabla f(y^{k})}^2.
  \end{align*}
\end{lemma}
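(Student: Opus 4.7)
The plan is to establish the two assertions separately: first the well-definedness of the virtual iterates $y^{k+1}_{\gamma}$ and $u^{k+1}_{\gamma}$, and then the descent inequality via a Lyapunov calculation tailored to the $\ell$-smoothness bound. The Lyapunov function is $V_k \eqdef f(y^{k}) - f(x^{*}) + \tfrac{\Gamma_{k}}{2}\norm{u^{k}-x^{*}}^{2}$.

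For well-definedness, I would rewrite the virtual $y$-step as
\[
  y^{k+1}_{\gamma} = \tfrac{1}{1+\alpha_{k,\gamma}}\bigl(y^{k} - \gamma\nabla f(y^{k})\bigr) + \tfrac{\alpha_{k,\gamma}}{1+\alpha_{k,\gamma}} u^{k},
\]
i.e., as a convex combination with strictly positive weight on the gradient-step point. Lemma~\ref{lemma:aux_2} applied with unit direction $h = -\nabla f(y^{k})/\norm{\nabla f(y^{k})}$ and step $t = \gamma\norm{\nabla f(y^{k})}$ places $y^{k} - \gamma\nabla f(y^{k}) \in \mathcal{X}$: the hypothesis $\gamma \le 1/\ell(2\norm{\nabla f(y^{k})})$ and monotonicity of $\ell$ give $t \le \norm{\nabla f(y^{k})}/\ell(2\norm{\nabla f(y^{k})}) \le \int_{0}^{\norm{\nabla f(y^{k})}} dv/\ell(\norm{\nabla f(y^{k})} + v) < q_{\max}(\norm{\nabla f(y^{k})})$. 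Since $u^{k}\in\bar{\mathcal{X}}$, the line-segment principle of convex analysis forces the convex combination to lie in the open set $\mathcal{X}$; the projection places $u^{k+1}_{\gamma}$ in $\bar{\mathcal{X}}$ tautologically. Edge cases $\gamma=0$ and $\nabla f(y^{k})=0$ are immediate.

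For the descent step, using $(1+\alpha_{k,\gamma})\Gamma_{k+1,\gamma} = \Gamma_{k}$,
\[
  (1+\alpha_{k,\gamma})V_{k+1,\gamma} - V_{k} = \alpha_{k,\gamma}(f(y^{k+1}_{\gamma}) - f(x^{*})) + (f(y^{k+1}_{\gamma}) - f(y^{k})) + \tfrac{\Gamma_{k}}{2}(\norm{u^{k+1}_{\gamma}-x^{*}}^{2} - \norm{u^{k}-x^{*}}^{2}).
\]
I then apply, in order: (i) convexity, $f(y^{k+1}_{\gamma}) - f(x^{*}) \le \inp{\nabla f(y^{k+1}_{\gamma})}{y^{k+1}_{\gamma}-x^{*}}$; (ii) the projection variational inequality with the three-point identity, giving $\tfrac{\Gamma_{k}}{2}(\norm{u^{k+1}_{\gamma}-x^{*}}^{2} - \norm{u^{k}-x^{*}}^{2}) \le -\alpha_{k,\gamma}\inp{\nabla f(y^{k+1}_{\gamma})}{u^{k+1}_{\gamma}-x^{*}} - \tfrac{\Gamma_{k}}{2}\norm{u^{k+1}_{\gamma}-u^{k}}^{2}$; (iii) the algebraic identity $\alpha_{k,\gamma}(y^{k+1}_{\gamma} - u^{k}) = -(y^{k+1}_{\gamma}-y^{k}) - \gamma\nabla f(y^{k})$ read off from the definition of $y^{k+1}_{\gamma}$; and (iv) Young's inequality with the exact match $\alpha_{k,\gamma}^{2} = \gamma\Gamma_{k}$ to absorb the leftover $\alpha_{k,\gamma}\inp{\nabla f(y^{k+1}_{\gamma})}{u^{k}-u^{k+1}_{\gamma}}$ into $\tfrac{\gamma}{2}\norm{\nabla f(y^{k+1}_{\gamma})}^{2} + \tfrac{\Gamma_{k}}{2}\norm{u^{k+1}_{\gamma}-u^{k}}^{2}$, cancelling the projection remainder.

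What survives after collecting terms is $f(y^{k+1}_{\gamma}) - f(y^{k}) - \inp{\nabla f(y^{k+1}_{\gamma})}{y^{k+1}_{\gamma}-y^{k}}$ plus $\tfrac{\gamma}{2}\norm{\nabla f(y^{k+1}_{\gamma})}^{2} - \gamma\inp{\nabla f(y^{k+1}_{\gamma})}{\nabla f(y^{k})}$. The first piece I bound using Lemma~\ref{lemma:smooth_convex} with $x = y^{k}$, $y = y^{k+1}_{\gamma}$: the triangle inequality $\norm{\nabla f(y^{k})} + v\norm{\nabla f(y^{k})-\nabla f(y^{k+1}_{\gamma})} \le 2\norm{\nabla f(y^{k})} + \norm{\nabla f(y^{k+1}_{\gamma})}$ combined with monotonicity of $\ell$ lower-bounds the integral by $1/(2\ell(2\norm{\nabla f(y^{k})} + \norm{\nabla f(y^{k+1}_{\gamma})}))$, producing the negative contribution $-\norm{\nabla f(y^{k+1}_{\gamma})-\nabla f(y^{k})}^{2}/(2\ell(2\norm{\nabla f(y^{k})} + \norm{\nabla f(y^{k+1}_{\gamma})}))$. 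The second piece simplifies algebraically: $\tfrac{\gamma}{2}\norm{\nabla f(y^{k+1}_{\gamma})}^{2} - \gamma\inp{\nabla f(y^{k+1}_{\gamma})}{\nabla f(y^{k})} = \tfrac{\gamma}{2}\norm{\nabla f(y^{k+1}_{\gamma})-\nabla f(y^{k})}^{2} - \tfrac{\gamma}{2}\norm{\nabla f(y^{k})}^{2} \le \tfrac{\gamma}{2}\norm{\nabla f(y^{k+1}_{\gamma})-\nabla f(y^{k})}^{2}$. Adding yields precisely the claimed right-hand side. The main obstacle is the bookkeeping in (iv): the $-\tfrac{\Gamma_{k}}{2}\norm{u^{k+1}_{\gamma}-u^{k}}^{2}$ contribution from the projection identity must exactly cancel the $+\tfrac{\Gamma_{k}}{2}\norm{u^{k+1}_{\gamma}-u^{k}}^{2}$ produced by Young, which works only because the algorithm couples $\alpha_{k,\gamma}^{2} = \gamma\Gamma_{k}$ with step $\tfrac{\alpha_{k,\gamma}}{\Gamma_{k}}$ in the $u$-update; the second delicate point is feeding the pointwise upper bound $2\norm{\nabla f(y^{k})} + \norm{\nabla f(y^{k+1}_{\gamma})}$ into the smoothness argument, so that the right-hand side becomes a clean affine-in-$\gamma$ expression that the outer proof (the ``chicken-and-egg'' resolution in Section~\ref{sec:proof_sketch}) can then zero out.
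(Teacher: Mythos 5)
Your proposal is correct and follows essentially the same route as the paper's proof: the same convex-combination argument with Lemma~\ref{lemma:aux_2} for $y^{k+1}_\gamma \in \mathcal{X}$, the same Lyapunov decomposition with the projection inequality, the Young step exploiting $\alpha_{k,\gamma}^2 = \gamma\Gamma_k$ to cancel $\tfrac{\Gamma_k}{2}\norm{u^{k+1}_\gamma - u^k}^2$, and Lemma~\ref{lemma:smooth_convex} with the triangle inequality to produce the $1/(2\ell(2\norm{\nabla f(y^k)} + \norm{\nabla f(y^{k+1}_\gamma)}))$ factor. The only differences are cosmetic bookkeeping in the order of substitutions, so no further changes are needed.
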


\begin{proof}
  (The following steps up to \eqref{eq:BqzoqnOOOORNZriziJ} may be skipped by the reader if $\mathcal{X} = \R^n$) \\
  Clearly, $u^{k+1}_{\gamma} \in \bar{\mathcal{X}}$ due the projection operator. However, we have to check that $y^{k+1}_\gamma \in \mathcal{X}$ to make sure the steps are well-defined. Notice that 
  \begin{align*}
    y^{k+1}_{\gamma} = \frac{1}{1 + \alpha_{k,\gamma}} \left(y^{k} - \gamma \nabla f(y^{k})\right) + \frac{\alpha_{k,\gamma}}{1 + \alpha_{k,\gamma}} u^{k}
  \end{align*}
  Moreover, $y^{k} - \gamma \nabla f(y^{k}) \in \mathcal{X}.$ If $\nabla f(y^{k}) = 0,$ then it is trivial. Otherwise,  
  \begin{align*}
    y^{k} - \gamma \nabla f(y^{k}) = y^{k} - \gamma \norm{\nabla f(y^{k})} \frac{\nabla f(y^{k})}{\norm{\nabla f(y^{k})}} \in \mathcal{X}
  \end{align*}
  due to Lemma~\ref{lemma:aux_2} because 
  \begin{align*}
    \gamma \norm{\nabla f(y^{k})} \leq \frac{\norm{\nabla f(y^{k})}}{\ell(2 \norm{\nabla f(y^{k})})} \leq \int_{0}^{\infty} \frac{dv}{\ell(\norm{\nabla f(y^{k})} + v)}.
  \end{align*}
  for all $\gamma \leq \frac{1}{\ell(2 \norm{\nabla f(y^k)})}.$ In total, $y^{k+1}_{\gamma} \in \mathcal{X}$ since $\mathcal{X}$ is an open convex set, $u^k \in \bar{\mathcal{X}},$ and $\frac{1}{1 + \alpha_{k,\gamma}} \neq 0$ (as it is a convex combination of a point from $\mathcal{X}$ and a point from $\bar{\mathcal{X}}$ with a non-zero weight; see \citep{rockafellar2015convex}[Theorem 6.1]).

  Consider the difference 
  \begin{align}
    \label{eq:BqzoqnOOOORNZriziJ}
    f(y^{k+1}_{\gamma}) - f(x^*) + \frac{\Gamma_{k+1, \gamma}}{2} \norm{u^{k+1}_{\gamma} - x^*}^2 - \left((f(y^{k}) - f(x^*)) + \frac{\Gamma_{k}}{2} \norm{u^{k} - x^*}^2\right).
  \end{align}
  Rearranging the terms, we get
  \begin{align*}
    &f(y^{k+1}_{\gamma}) - f(x^*) + \frac{\Gamma_{k+1, \gamma}}{2} \norm{u^{k+1}_{\gamma} - x^*}^2 - \left((f(y^{k}) - f(x^*)) + \frac{\Gamma_{k}}{2} \norm{u^{k} - x^*}^2\right) \\
    &= - (f(y^{k}) - f(y^{k+1}_{\gamma}) - \inp{\nabla f(y^{k+1}_{\gamma})}{y^{k} - y^{k+1}_{\gamma}}) \\
    &\quad + \inp{\nabla f(y^{k+1}_{\gamma})}{y^{k+1}_{\gamma} - y^{k}} \\
    &\quad + \frac{\Gamma_{k+1, \gamma} - \Gamma_{k}}{2} \norm{u^{k+1}_{\gamma} - x^*}^2 + \frac{\Gamma_{k}}{2}\left(\norm{u^{k+1}_{\gamma} - x^*}^2 - \norm{u^{k} - x^*}^2\right).
  \end{align*}
  Since $\Gamma_{k} = (1 + \alpha_{k,\gamma}) \Gamma_{k+1, \gamma},$
  \begin{align*}
    &f(y^{k+1}_{\gamma}) - f(x^*) + \frac{(1 + \alpha_{k,\gamma}) \Gamma_{k+1, \gamma}}{2} \norm{u^{k+1}_{\gamma} - x^*}^2 - \left((f(y^{k}) - f(x^*)) + \frac{\Gamma_{k}}{2} \norm{u^{k} - x^*}^2\right) \\
    &= - (f(y^{k}) - f(y^{k+1}_{\gamma}) - \inp{\nabla f(y^{k+1}_{\gamma})}{y^{k} - y^{k+1}_{\gamma}}) \\
    &\quad + \inp{\nabla f(y^{k+1}_{\gamma})}{y^{k+1}_{\gamma} - y^{k}} \\
    &\quad + \frac{\Gamma_{k}}{2}\left(\norm{u^{k+1}_{\gamma} - x^*}^2 - \norm{u^{k} - x^*}^2\right).
  \end{align*}
  Due to $\norm{a}^2 - \norm{a + b}^2 = - \norm{b}^2 - 2 \inp{a}{b}$ for all $a, b \in \R^d,$
  \begin{equation}
  \label{eq:xrmEVqbJM}
  \begin{aligned}
    &f(y^{k+1}_{\gamma}) - f(x^*) + \frac{(1 + \alpha_{k,\gamma}) \Gamma_{k+1, \gamma}}{2} \norm{u^{k+1}_{\gamma} - x^*}^2 - \left((f(y^{k}) - f(x^*)) + \frac{\Gamma_{k}}{2} \norm{u^{k} - x^*}^2\right) \\
    &= - (f(y^{k}) - f(y^{k+1}_{\gamma}) - \inp{\nabla f(y^{k+1}_{\gamma})}{y^{k} - y^{k+1}_{\gamma}}) \\
    &\quad + \inp{\nabla f(y^{k+1}_{\gamma})}{y^{k+1}_{\gamma} - y^{k}} \\
    &\quad + \frac{\Gamma_{k}}{2}\left(- \norm{u^{k} - u^{k+1}_{\gamma}}^2 - 2 \inp{u^{k+1}_{\gamma} - x^*}{u^{k} - u^{k+1}_{\gamma}}\right).
  \end{aligned}
  \end{equation}
  Consider the last inner product:
  \begin{align*}
    &-\inp{u^{k+1}_{\gamma} - x^*}{u^{k} - u^{k+1}_{\gamma}} \\
    &=\inp{u^{k+1}_{\gamma} - x^*}{\left(u^k - \frac{\alpha_{k,\gamma}}{\Gamma_k} \nabla f(y^{k+1}_{\gamma})\right) - u^{k}} +\inp{u^{k+1}_{\gamma} - x^*}{u^{k+1}_{\gamma} - \left(u^k - \frac{\alpha_{k,\gamma}}{\Gamma_k} \nabla f(y^{k+1}_{\gamma})\right)}.
  \end{align*}
  Using $u^{k+1}_{\gamma} = \textnormal{Proj}_{\bar{\mathcal{X}}} \left(u^k - \frac{\alpha_{k,\gamma}}{\Gamma_k} \nabla f(y^{k+1}_{\gamma})\right)$ and the projection property
    $\inp{\textnormal{Proj}_{\bar{\mathcal{X}}}(y) - x}{\textnormal{Proj}_{\bar{\mathcal{X}}}(y) - y} \leq 0$
  for all $y \in \R^d, x \in \bar{\mathcal{X}},$ we have
  \begin{align*}
    &-\inp{u^{k+1}_{\gamma} - x^*}{u^{k} - u^{k+1}_{\gamma}} \leq \inp{u^{k+1}_{\gamma} - x^*}{\left(u^k - \frac{\alpha_{k,\gamma}}{\Gamma_k} \nabla f(y^{k+1}_{\gamma})\right) - u^{k}} = -\inp{u^{k+1}_{\gamma} - x^*}{\frac{\alpha_{k,\gamma}}{\Gamma_k} \nabla f(y^{k+1}_{\gamma})}.
  \end{align*}
  Substituting to \eqref{eq:xrmEVqbJM},
  \begin{align*}
    &f(y^{k+1}_{\gamma}) - f(x^*) + \frac{(1 + \alpha_{k,\gamma}) \Gamma_{k+1, \gamma}}{2} \norm{u^{k+1}_{\gamma} - x^*}^2 - \left((f(y^{k}) - f(x^*)) + \frac{\Gamma_{k}}{2} \norm{u^{k} - x^*}^2\right) \\
    &= - (f(y^{k}) - f(y^{k+1}_{\gamma}) - \inp{\nabla f(y^{k+1}_{\gamma})}{y^{k} - y^{k+1}_{\gamma}}) \\
    &\quad + \inp{\nabla f(y^{k+1}_{\gamma})}{y^{k+1}_{\gamma} - y^{k}} \\
    &\quad + \frac{\Gamma_{k}}{2}\left(- \norm{u^{k} - u^{k+1}_{\gamma}}^2 - 2 \inp{u^{k+1}_{\gamma} - x^*}{\frac{\alpha_{k,\gamma}}{\Gamma_k} \nabla f(y^{k+1}_{\gamma})}\right) \\
    &= - (f(y^{k}) - f(y^{k+1}_{\gamma}) - \inp{\nabla f(y^{k+1}_{\gamma})}{y^{k} - y^{k+1}_{\gamma}}) \\
    &\quad + \inp{\nabla f(y^{k+1}_{\gamma})}{y^{k+1}_{\gamma} - y^{k}} \\
    &\quad - \frac{\Gamma_{k}}{2}\norm{u^{k} - u^{k+1}_{\gamma}}^2 \\
    &\quad - \alpha_{k,\gamma} \inp{u^{k+1}_{\gamma} - x^*}{\nabla f(y^{k+1}_{\gamma})} \\
    &= - (f(y^{k}) - f(y^{k+1}_{\gamma}) - \inp{\nabla f(y^{k+1}_{\gamma})}{y^{k} - y^{k+1}_{\gamma}}) \\
    &\quad + \inp{\nabla f(y^{k+1}_{\gamma})}{y^{k+1}_{\gamma} - y^{k} - \alpha_{k,\gamma} (u^{k+1}_{\gamma} - y^{k+1}_{\gamma})} \\
    &\quad - \frac{\Gamma_{k}}{2}\norm{u^{k} - u^{k+1}_{\gamma}}^2 \\
    &\quad - \alpha_{k,\gamma} \inp{y^{k+1}_{\gamma} - x^*}{\nabla f(y^{k+1}_{\gamma})}.
  \end{align*}
  In the last two equalities, we rearranged terms. Using the convexity of $f,$ we have $-(f(y^{k+1}_{\gamma}) - f(x^*)) \geq  - \inp{\nabla f(y^{k+1}_{\gamma})}{y^{k+1}_{\gamma} - x^*}$ and 
  \begin{align*}
    &(1 + \alpha_{k,\gamma})(f(y^{k+1}_{\gamma}) - f(x^*)) + \frac{(1 + \alpha_{k,\gamma}) \Gamma_{k+1, \gamma}}{2} \norm{u^{k+1}_{\gamma} - x^*}^2 - \left((f(y^{k}) - f(x^*)) + \frac{\Gamma_{k}}{2} \norm{u^{k} - x^*}^2\right) \\
    &\leq - (f(y^{k}) - f(y^{k+1}_{\gamma}) - \inp{\nabla f(y^{k+1}_{\gamma})}{y^{k} - y^{k+1}_{\gamma}}) \\
    &\quad + \inp{\nabla f(y^{k+1}_{\gamma})}{y^{k+1}_{\gamma} - y^{k} - \alpha_{k,\gamma} (u^{k+1}_{\gamma} - y^{k+1}_{\gamma})} \\
    &\quad - \frac{\Gamma_{k}}{2}\norm{u^{k} - u^{k+1}_{\gamma}}^2 \\
    &= - (f(y^{k}) - f(y^{k+1}_{\gamma}) - \inp{\nabla f(y^{k+1}_{\gamma})}{y^{k} - y^{k+1}_{\gamma}}) \\
    &\quad + \inp{\nabla f(y^{k+1}_{\gamma})}{(1 + \alpha_{k,\gamma})y^{k+1}_{\gamma} - y^{k} - \alpha_{k,\gamma} u^{k+1}_{\gamma}} \\
    &\quad - \frac{\Gamma_{k}}{2}\norm{u^{k} - u^{k+1}_{\gamma}}^2.
  \end{align*}
  In the last equality, we rearranged terms. Recall that
  \begin{align*}
    (1 + \alpha_{k,\gamma}) y^{k+1}_{\gamma} - y^{k} &= \alpha_{k,\gamma} u^{k} - \gamma \nabla f(y^{k}).
  \end{align*}
  Thus,
  \begin{align*}
    &(1 + \alpha_{k,\gamma})(f(y^{k+1}_{\gamma}) - f(x^*)) + \frac{(1 + \alpha_{k,\gamma}) \Gamma_{k+1, \gamma}}{2} \norm{u^{k+1}_{\gamma} - x^*}^2 - \left((f(y^{k}) - f(x^*)) + \frac{\Gamma_{k}}{2} \norm{u^{k} - x^*}^2\right) \\
    &\leq - (f(y^{k}) - f(y^{k+1}_{\gamma}) - \inp{\nabla f(y^{k+1}_{\gamma})}{y^{k} - y^{k+1}_{\gamma}}) \\
    &\quad + \alpha_{k,\gamma} \inp{\nabla f(y^{k+1}_{\gamma})}{u^{k} - u^{k+1}_{\gamma}} - \gamma \inp{\nabla f(y^{k+1}_{\gamma})}{\nabla f(y^{k})} \\
    &\quad - \frac{\Gamma_{k}}{2}\norm{u^{k} - u^{k+1}_{\gamma}}^2 \\
    &= - (f(y^{k}) - f(y^{k+1}_{\gamma}) - \inp{\nabla f(y^{k+1}_{\gamma})}{y^{k} - y^{k+1}_{\gamma}}) \\
    &\quad + \alpha_{k,\gamma} \inp{\nabla f(y^{k+1}_{\gamma})}{u^{k} - u^{k+1}_{\gamma}} \\
    &\quad - \frac{\gamma}{2} \norm{\nabla f(y^{k+1}_{\gamma})}^2 - \frac{\gamma}{2} \norm{\nabla f(y^{k})}^2 + \frac{\gamma}{2} \norm{\nabla f(y^{k+1}_{\gamma}) - \nabla f(y^{k})}^2 \\
    &\quad - \frac{\Gamma_{k}}{2}\norm{u^{k} - u^{k+1}_{\gamma}}^2,
  \end{align*}
  where we use $- \inp{a}{b} = \frac{1}{2}\norm{a - b}^2 - \frac{1}{2}\norm{a}^2 - \frac{1}{2}\norm{b}^2$ for all $a, b \in \R^d.$ Using Young's inequality,
  \begin{align*}
    &(1 + \alpha_{k,\gamma})(f(y^{k+1}_{\gamma}) - f(x^*)) + \frac{(1 + \alpha_{k,\gamma}) \Gamma_{k+1, \gamma}}{2} \norm{u^{k+1}_{\gamma} - x^*}^2 - \left((f(y^{k}) - f(x^*)) + \frac{\Gamma_{k}}{2} \norm{u^{k} - x^*}^2\right) \\
    &\leq - (f(y^{k}) - f(y^{k+1}_{\gamma}) - \inp{\nabla f(y^{k+1}_{\gamma})}{y^{k} - y^{k+1}_{\gamma}}) \\
    &\quad + \frac{\gamma}{2} \norm{\nabla f(y^{k+1}_{\gamma})}^2 + \frac{\alpha^2_{k, \gamma}}{2 \gamma}  \norm{u^{k} - u^{k+1}_{\gamma}}^2 \\
    &\quad - \frac{\gamma}{2} \norm{\nabla f(y^{k+1}_{\gamma})}^2 - \frac{\gamma}{2} \norm{\nabla f(y^{k})}^2 + \frac{\gamma}{2} \norm{\nabla f(y^{k+1}_{\gamma}) - \nabla f(y^{k})}^2 \\
    &\quad - \frac{\Gamma_{k}}{2}\norm{u^{k} - u^{k+1}_{\gamma}}^2 \\
    &= - (f(y^{k}) - f(y^{k+1}_{\gamma}) - \inp{\nabla f(y^{k+1}_{\gamma})}{y^{k} - y^{k+1}_{\gamma}}) \\
    &\quad  + \frac{\alpha^2_{k, \gamma}}{2 \gamma}  \norm{u^{k} - u^{k+1}_{\gamma}}^2 - \frac{\Gamma_{k}}{2}\norm{u^{k} - u^{k+1}_{\gamma}}^2 \\
    &\quad  - \frac{\gamma}{2} \norm{\nabla f(y^{k})}^2 + \frac{\gamma}{2} \norm{\nabla f(y^{k+1}_{\gamma}) - \nabla f(y^{k})}^2,
  \end{align*}
  where the terms $\frac{\gamma}{2} \norm{\nabla f(y^{k+1}_{\gamma})}^2$ are cancelled out. Since $\alpha_{k,\gamma} = \sqrt{\gamma \Gamma_k},$ the terms with $\norm{u^{k} - u^{k+1}_{\gamma}}$ are also cancelled out and 
  \begin{align}
    &(1 + \alpha_{k,\gamma})(f(y^{k+1}_{\gamma}) - f(x^*)) + \frac{(1 + \alpha_{k,\gamma}) \Gamma_{k+1, \gamma}}{2} \norm{u^{k+1}_{\gamma} - x^*}^2 - \left((f(y^{k}) - f(x^*)) + \frac{\Gamma_{k}}{2} \norm{u^{k} - x^*}^2\right) \nonumber \\
    &\leq - (f(y^{k}) - f(y^{k+1}_{\gamma}) - \inp{\nabla f(y^{k+1}_{\gamma})}{y^{k} - y^{k+1}_{\gamma}}) \nonumber \\
    &\quad  - \frac{\gamma}{2} \norm{\nabla f(y^{k})}^2 + \frac{\gamma}{2} \norm{\nabla f(y^{k+1}_{\gamma}) - \nabla f(y^{k})}^2 \nonumber \\ 
    &\leq - (f(y^{k}) - f(y^{k+1}_{\gamma}) - \inp{\nabla f(y^{k+1}_{\gamma})}{y^{k} - y^{k+1}_{\gamma}}) + \frac{\gamma}{2} \norm{\nabla f(y^{k+1}_{\gamma}) - \nabla f(y^{k})}^2, \label{eq:Sbvsyjk}
  \end{align}
  where the last inequality due to $\frac{\gamma}{2} \norm{\nabla f(y^{k})}^2 \geq 0.$ Using Lemma~\ref{lemma:smooth_convex}, we get 
  \begin{align*}
    &f(y^{k}) - f(y^{k+1}_{\gamma}) - \inp{\nabla f(y^{k+1}_{\gamma})}{y^{k} - y^{k+1}_{\gamma}} \\
    &\geq \norm{\nabla f(y^{k}) - \nabla f(y^{k + 1}_{\gamma})}^2 \int_{0}^{1} \frac{1 - v}{\ell(\norm{\nabla f(y^{k})} + \norm{\nabla f(y^{k}) - \nabla f(y^{k + 1}_{\gamma})} v)} d v \\
    &\geq \norm{\nabla f(y^{k}) - \nabla f(y^{k + 1}_{\gamma})}^2 \frac{1}{2 \ell(\norm{\nabla f(y^{k})} + \norm{\nabla f(y^{k}) - \nabla f(y^{k + 1}_{\gamma})})},
  \end{align*}
  where we use that $\ell$ is non-decreasing and bounded the term in the denominator by the maximum possible value with $v = 1.$
  Using triangle's inequality,
  \begin{align*}
    &f(y^{k}) - f(y^{k+1}_{\gamma}) - \inp{\nabla f(y^{k+1}_{\gamma})}{y^{k} - y^{k+1}_{\gamma}} \geq \norm{\nabla f(y^{k}) - \nabla f(y^{k + 1}_{\gamma})}^2 \frac{1}{2 \ell(2 \norm{\nabla f(y^{k})} + \norm{\nabla f(y^{k + 1}_{\gamma})})},
  \end{align*}
  Substituting to \eqref{eq:Sbvsyjk},
  \begin{align*}
    &(1 + \alpha_{k,\gamma})(f(y^{k+1}_{\gamma}) - f(x^*)) + \frac{(1 + \alpha_{k,\gamma}) \Gamma_{k+1, \gamma}}{2} \norm{u^{k+1}_{\gamma} - x^*}^2 - \left((f(y^{k}) - f(x^*)) + \frac{\Gamma_{k}}{2} \norm{u^{k} - x^*}^2\right) \nonumber \\
    &\leq \frac{1}{2} \left(\gamma - \frac{1}{\ell(2 \norm{\nabla f(y^{k})} + \norm{\nabla f(y^{k + 1}_{\gamma})})}\right) \norm{\nabla f(y^{k+1}_{\gamma}) - \nabla f(y^{k})}^2.
  \end{align*}
\end{proof}

\section{Convergence Theorems}

\subsection{Subquadratic and Quadratic Growth of $\ell$}

\begin{lemma}
  \label{lemma:bound_sub}
  Under Assumptions~\ref{ass:gen_smooth} and \ref{ass:convex}, let $\psi \,:\, \R_{+} \to \R_{+}$ such that $\psi(x) = \frac{x^2}{2 \ell(4 x)}$ is strictly increasing, $f(y) - f(x^*) \leq \delta$ for some $y \in \mathcal{X},$ and $\delta \in (0, \infty]$ such that $\ell\left(8 \sqrt{\delta \ell\left(0\right)}\right) \leq 2 \ell\left(0\right)$, then $\ell\left(4 \norm{\nabla f(y)}\right) \leq 2 \ell(0).$
\end{lemma}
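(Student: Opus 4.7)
The plan is to leverage the bound $\psi(\|\nabla f(y)\|) \leq f(y) - f(x^*)$ that is in fact already established inside the proof of Lemma~\ref{lemma:func_to_grad_spec}: it follows from Lemma~\ref{lemma:smooth_convex} applied with the second argument equal to $x^*$, using $\nabla f(x^*) = 0$ together with the monotonicity of $\ell$ to estimate the integral uniformly from above by $\ell(4\|\nabla f(y)\|)$. Combined with the hypothesis $f(y) - f(x^*) \leq \delta$ this gives $\psi(\|\nabla f(y)\|) \leq \delta$. Note that this step is purely an inequality between $\psi$-values and does not require $\delta \in \mathrm{im}(\psi)$, which is convenient since the statement of Lemma~\ref{lemma:bound_sub} does not assume it.

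Next I would introduce the candidate upper bound $a \eqdef 2\sqrt{\delta\,\ell(0)}$ and verify $\psi(a) \geq \delta$ by a direct computation:
\[
\psi(a) \;=\; \frac{(2\sqrt{\delta\,\ell(0)})^2}{2\,\ell(4a)} \;=\; \frac{2\delta\,\ell(0)}{\ell(8\sqrt{\delta\,\ell(0)})}.
\]
The assumption $\ell(8\sqrt{\delta\,\ell(0)}) \leq 2\ell(0)$ is then precisely the self-consistency condition $\psi(a) \geq \delta$. Chaining with the previous paragraph yields $\psi(\|\nabla f(y)\|) \leq \delta \leq \psi(a)$, and the hypothesized strict monotonicity of $\psi$ forces $\|\nabla f(y)\| \leq a = 2\sqrt{\delta\,\ell(0)}$.

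The conclusion is then immediate: since $\ell$ is non-decreasing, $\ell(4\|\nabla f(y)\|) \leq \ell(8\sqrt{\delta\,\ell(0)}) \leq 2\ell(0)$, which is the claim. There is essentially no obstacle here; the whole argument hinges on recognizing that the hypothesis on $\delta$ is exactly the self-consistency inequality $\psi(2\sqrt{\delta\,\ell(0)}) \geq \delta$ that closes the bootstrap loop in the ``chicken-and-egg'' discussion of Section~\ref{sec:proof_sketch}. The edge case $\delta = \infty$ is degenerate but harmless: the hypothesis then forces $\ell$ to be bounded above by $2\ell(0)$ globally, and the conclusion becomes trivial.
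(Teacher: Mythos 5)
Your proof is correct, and it rests on the same two ingredients as the paper's: the bound $\psi(\norm{\nabla f(y)}) \leq f(y) - f(x^*) \leq \delta$ coming from Lemma~\ref{lemma:smooth_convex}, and the algebraic observation that the hypothesis $\ell\left(8\sqrt{\delta \ell(0)}\right) \leq 2\ell(0)$ is exactly $\psi\left(2\sqrt{\delta \ell(0)}\right) \geq \delta$. The organization differs in one worthwhile respect: the paper first invokes Lemma~\ref{lemma:func_to_grad_spec} to get $\norm{\nabla f(y)} \leq \psi^{-1}(\delta)$ and then runs a chain of equivalences in terms of $\psi^{-1}(\delta)$, which implicitly requires $\psi^{-1}$ to be defined at $\delta$ (the caveat $\delta \in \mathrm{im}(\psi)$ in that lemma), whereas you never invert $\psi$: you compare $\psi$-values directly, $\psi(\norm{\nabla f(y)}) \leq \delta \leq \psi\left(2\sqrt{\delta\ell(0)}\right)$, and conclude by strict monotonicity. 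This sidesteps the image condition entirely (and, as you note, when the hypothesis holds the finite point $a = 2\sqrt{\delta\ell(0)}$ automatically witnesses $\delta \leq \sup\psi$), and it handles the permitted endpoint $\delta = \infty$ cleanly, which the paper's write-up leaves implicit. So: same mathematics, but your bootstrapping through $\psi$ rather than $\psi^{-1}$ is a slightly more robust packaging.
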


\begin{proof}
  With this choice of $\delta,$ we get
  \begin{align*}
    \ell\left(4 \norm{\nabla f(y)}\right) \leq 2 \ell\left(0\right)
  \end{align*}
  because, due to $f(y) - f(x^*) \leq \delta$ and Lemma~\ref{lemma:func_to_grad_spec},
  \begin{align*}
    \ell\left(4 \norm{\nabla f(y^0)}\right) \leq \ell\left(4 \psi^{-1}(\delta)\right)
  \end{align*}
  and
  \begin{equation}
  \label{eq:ybjPYO}
  \begin{aligned}
    \ell\left(4 \psi^{-1}(\delta)\right) \leq 2 \ell\left(0\right) 
    &\Leftrightarrow \frac{\left(\psi^{-1}(\delta)\right)^2}{4 \ell\left(0\right)} \leq \frac{\left(\psi^{-1}(\delta)\right)^2}{2 \ell\left(4 \psi^{-1}(\delta)\right)} \overset{\psi(\psi^{-1}(\delta)) = \delta}{\Leftrightarrow} \frac{\left(\psi^{-1}(\delta)\right)^2}{4 \ell\left(0\right)} \leq \delta \Leftrightarrow \psi^{-1}(\delta) \leq 2 \sqrt{\delta \ell\left(0\right)} \\
    &\Leftrightarrow \delta \leq \psi\left(2 \sqrt{\delta \ell\left(0\right)}\right) \Leftrightarrow \delta \leq \frac{2 \delta \ell\left(0\right)}{\ell\left(8 \sqrt{\delta \ell\left(0\right)}\right)} \Leftrightarrow \ell\left(8 \sqrt{\delta \ell\left(0\right)}\right) \leq 2 \ell\left(0\right).
  \end{aligned}
  \end{equation}
\end{proof}

\THEORESUB*

\begin{proof}
  In our proof, we define the Lyapunov function $V_{k} \eqdef f(y^{k}) - f(x^*) + \frac{\Gamma_{k}}{2} \norm{u^{k} - x^*}^2.$ 
  After running \algname{GD}, we get $\ell\left(4 \norm{\nabla f(y^0)}\right) \leq 2 \ell(0)$ due to Lemma~\ref{lemma:bound_sub} and the choice of $\delta.$ Trivially, $V_{0} \leq V_{0}.$ Due to $f(y^{0}) - f(x^*) \leq \frac{\delta}{2}$ in Alg.~\ref{alg:main} and $\norm{y^{0} - x^*} \leq \norm{x^{0} - x^*}$ (\algname{GD} is monotonic; \citep{tyurin2024toward}[Lemma I.2]),
  \begin{equation}
  \begin{aligned}
    \label{eq:DrpsG}
    V_{0} 
    &= f(y^{0}) - f(x^*) + \frac{\Gamma_{0}}{2} \norm{y^{0} - x^*}^2 \leq \frac{\delta}{2} + \frac{\Gamma_{0}}{2} \norm{y^{0} - x^*}^2 \\
    &\leq \frac{\delta}{2} + \frac{\Gamma_{0}}{2} \norm{x^{0} - x^*}^2 \leq \delta
  \end{aligned}
  \end{equation}
  since $\Gamma_{0} = \frac{\delta}{\bar{R}^2}$ and $\bar{R} \geq \norm{x^{0} - x^*}.$ Using mathematical induction, we assume that 
  $\ell\left(4 \norm{\nabla f(y^k)}\right) \leq 2 \ell(0)$ and $V_{k} \leq \left(\prod_{i=0}^{k-1} \frac{1}{1 + \alpha_i}\right) V_0$ for some $k \geq 0.$

  Consider Lemma~\ref{thm:main_lemma} and the steps \eqref{eq:TqfxanGJGnaQsmS}. Then,
  \begin{align*}
    &(1 + \alpha_{k,\gamma})(f(y^{k+1}_{\gamma}) - f(x^*)) + \frac{(1 + \alpha_{k,\gamma}) \Gamma_{k+1, \gamma}}{2} \norm{u^{k+1}_{\gamma} - x^*}^2 - \left((f(y^{k}) - f(x^*)) + \frac{\Gamma_{k}}{2} \norm{u^{k} - x^*}^2\right) \nonumber \\
    &\leq \frac{1}{2} \left(\gamma - \frac{1}{\ell(2 \norm{\nabla f(y^{k})} + \norm{\nabla f(y^{k + 1}_{\gamma})})}\right) \norm{\nabla f(y^{k+1}_{\gamma}) - \nabla f(y^{k})}^2,
  \end{align*}
  where $0 \leq \gamma \leq \frac{1}{\ell(2 \norm{\nabla f(y^k)})}$ is a free parameter. Let us take the smallest $\gamma$ such that 
  \begin{align*}
    g(\gamma) \eqdef \gamma - \frac{1}{\ell(2 \norm{\nabla f(y^{k})} + \norm{\nabla f(y^{k + 1}_{\gamma})})} = 0
  \end{align*}
  and denote it as $\gamma^*.$ Such a choice exists because $g(\gamma)$ is continuous for all $\gamma \geq 0$ as a composition of continuous functions ($y^{k + 1}_{\gamma}$ is a continuous function of $\gamma$), $g(0) = - \frac{1}{\ell(2 \norm{\nabla f(y^{k})} + \norm{\nabla f(y^{k + 1}_{0})})} < 0,$ and 
  \begin{align*}
    g(\bar{\gamma}) = \bar{\gamma} - \frac{1}{\ell(2 \norm{\nabla f(y^{k})} + \norm{\nabla f(y^{k + 1}_{\bar{\gamma}})})} \geq \bar{\gamma} - \frac{1}{\ell(2 \norm{\nabla f(y^{k})})} = 0
  \end{align*}
  for $\bar{\gamma} = \frac{1}{\ell(2 \norm{\nabla f(y^{k})})}.$ Note that $\gamma^* \leq \frac{1}{\ell(2 \norm{\nabla f(y^{k})})}.$ For all $\gamma \leq \gamma^*,$ $g(\gamma) \leq 0$ and 
  \begin{equation}
  \label{eq:JGNJubw}
  \begin{aligned}
    &(1 + \alpha_{k,\gamma})(f(y^{k+1}_{\gamma}) - f(x^*)) + \frac{(1 + \alpha_{k,\gamma}) \Gamma_{k+1, \gamma}}{2} \norm{u^{k+1}_{\gamma} - x^*}^2\\
    &\leq (f(y^{k}) - f(x^*)) + \frac{\Gamma_{k}}{2} \norm{u^{k} - x^*}^2 =: V_k,
  \end{aligned}
  \end{equation}
  which ensures that 
  \begin{align*}
    &f(y^{k+1}_{\gamma}) - f(x^*) \leq V_k.
  \end{align*}
  Recall that $V_{k} \leq V_0 \overset{\eqref{eq:DrpsG}}{\leq} \delta.$ It means that 
  \begin{align*}
    &f(y^{k+1}_{\gamma}) - f(x^*) \leq \delta
  \end{align*}
  and 
  \begin{align*}
    \ell\left(4 \norm{\nabla f(y^{k+1}_{\gamma})}\right) \leq 2 \ell(0)
  \end{align*}
  for all $\gamma \leq \gamma^*$ due to Lemma~\ref{lemma:bound_sub}. Therefore, by the definition of $\gamma^*$ and using $\ell\left(4 \norm{\nabla f(y^{k})}\right) \leq 2 \ell(0),$
  \begin{align*}
    \gamma^* = \frac{1}{\ell(2 \norm{\nabla f(y^{k})} + \norm{\nabla f(y^{k + 1}_{\gamma^*})})} \geq \frac{1}{\max\{\ell(4 \norm{\nabla f(y^{k})}), \ell(4 \norm{\nabla f(y^{k + 1}_{\gamma^*})})\}} \geq \frac{1}{2 \ell(0)},
  \end{align*}
  meaning that we can take $\gamma = \frac{1}{2 \ell(0)}$ and \eqref{eq:JGNJubw} holds:
  \begin{align*}
    (1 + \alpha_{k,\gamma})(f(y^{k+1}_{\gamma}) - f(x^*)) + \frac{(1 + \alpha_{k,\gamma}) \Gamma_{k+1, \gamma}}{2} \norm{u^{k+1}_{\gamma} - x^*}^2 \leq V_k.
  \end{align*}
  Notice that $\alpha_{k,\gamma} = \alpha_k,$ $y^{k+1}_{\gamma} = y^{k+1},$ $\Gamma_{k+1, \gamma} = \Gamma_{k+1},$ and $u^{k+1}_{\gamma} = u^{k+1}$ with $\gamma = \frac{1}{2 \ell(0)}.$ Therefore, $(1 + \alpha_{k,\gamma})(f(y^{k+1}_{\gamma}) - f(x^*)) + \frac{(1 + \alpha_{k,\gamma}) \Gamma_{k+1, \gamma}}{2} \norm{u^{k+1}_{\gamma} - x^*}^2 = (1 + \alpha_{k}) V_{k+1},$
  \begin{align*}
    \ell\left(4 \norm{\nabla f(y^{k+1})}\right) \leq 2 \ell(0),
  \end{align*}
  and
  \begin{align*}
    V_{k+1} \leq \frac{1}{1 + \alpha_k} V_k \leq \left(\prod_{i=0}^{k} \frac{1}{1 + \alpha_i}\right) V_0,
  \end{align*}
  We have proved the next step of the induction. Finally, for all $k \geq 0,$
  \begin{align*}
    f(y^{k+1}) - f(x^*) 
    &\leq V_{k+1} \leq \left(\prod_{i=0}^{k} \frac{1}{1 + \alpha_i}\right) \left(f(y^{0}) - f(x^*) + \frac{\Gamma_{0}}{2} \norm{y^{0} - x^*}^2\right) \\
    &= \Gamma_{0} \left(\prod_{i=0}^{k} \frac{1}{1 + \alpha_i}\right) \left(\frac{1}{\Gamma_{0}}(f(y^{0}) - f(x^*)) + \frac{1}{2} \norm{y^{0} - x^*}^2\right).
  \end{align*}
  Since $f(y^{0}) - f(x^*) \leq \frac{\delta}{2},$ $\norm{y^{0} - x^*}^2 \leq \norm{x^{0} - x^*}^2 \leq \bar{R}^2,$ and $\Gamma_{k+1} = \Gamma_{0} \left(\prod_{i=0}^{k} \frac{1}{1 + \alpha_i}\right),$
  \begin{align*}
    f(y^{k+1}) - f(x^*) \leq \Gamma_{k+1} \left(\frac{\delta}{2 \Gamma_{0}} + \frac{1}{2} \bar{R}^2\right) = \Gamma_{k+1} \bar{R}^2
  \end{align*}
  because $\Gamma_{0} = \frac{\delta}{\bar{R}^2}.$ It is left to use Theorem~\ref{thm:gamma}.
\end{proof}

\THEORESUBRATE*

\begin{proof}
  At the beginning, we run \algname{GD}, which takes $k_{\textnormal{\algname{GD}}}(\delta)$ iterations (i.e., gradient evaluations). Next, using Theorem~\ref{thm:gamma} and the choice of $\gamma = \frac{1}{2 \ell\left(0\right)},$
  \begin{align*}
    \Gamma_{k+1} \leq \frac{18 \ell\left(0\right)}{\left(k + 1 - \bar{k}\right)^2}
  \end{align*}
  for all $k \geq \bar{k} \eqdef \max\left\{1 + \frac{1}{2} \log_{3/2}\left(\frac{\Gamma_0}{8 \ell\left(0\right)}\right), 0\right\}.$ Taking 
  \begin{align*}
    k \geq \frac{5 \sqrt{\ell\left(0\right)} \bar{R}}{\sqrt{\varepsilon}} + \bar{k},
  \end{align*}
  we get $f(y^{k+1}) - f(x^*) \leq \varepsilon$ due to Theorem~\ref{thm:convex_increasing}.
\end{proof}

\subsection{\sectionname}

\THEORESUBBETTER*

\begin{proof}
  In our proof, we define the Lyapunov function $V_{k} \eqdef f(y^{k}) - f(x^*) + \frac{\Gamma_{k}}{2} \norm{u^{k} - x^*}^2.$ 
  Trivially, $V_{0} \leq V_{0}$ and 
  \begin{align}
    \label{eq:LHzDkKZJSMGGGzeUTdc}
    V_{0} = f(y^{0}) - f(x^*) + \frac{\Gamma_{0}}{2} \norm{y^{0} - x^*}^2 \leq \Gamma_{0} R^2 \leq \Gamma_0 \bar{R}^2
  \end{align}
  when $\Gamma_{0} \geq \frac{2 (f(x^{0}) - f(x^*))}{\norm{x^{0} - x^*}^2} = \frac{2 (f(y^{0}) - f(x^*))}{\norm{y^{0} - x^*}^2}$ and $\bar{R} \geq R.$ Moreover,
  \begin{align*}
    f(y^{0}) - f(x^*) \leq \Gamma_{0} \bar{R}^2.
  \end{align*}
  Due to Lemma~\ref{lemma:func_to_grad_spec}, 
  \begin{align*}
    \norm{\nabla f(y^{0})} \leq \psi^{-1}(\Gamma_{0} \bar{R}^2).
  \end{align*}

  Using mathematical induction, we assume that 
  $\norm{\nabla f(y^{k})} \leq \psi^{-1}(\Gamma_{k} \bar{R}^2)$ and $V_{k} \leq \left(\prod_{i=0}^{k-1} \frac{1}{1 + \alpha_i}\right) V_0$ for some $k \geq 0.$

  Consider Lemma~\ref{thm:main_lemma} and the steps \eqref{eq:TqfxanGJGnaQsmS}. Then,
  \begin{align*}
    &(1 + \alpha_{k,\gamma})(f(y^{k+1}_{\gamma}) - f(x^*)) + \frac{(1 + \alpha_{k,\gamma}) \Gamma_{k+1, \gamma}}{2} \norm{u^{k+1}_{\gamma} - x^*}^2 - \left((f(y^{k}) - f(x^*)) + \frac{\Gamma_{k}}{2} \norm{u^{k} - x^*}^2\right) \nonumber \\
    &\leq \frac{1}{2} \left(\gamma - \frac{1}{\ell(2 \norm{\nabla f(y^{k})} + \norm{\nabla f(y^{k + 1}_{\gamma})})}\right) \norm{\nabla f(y^{k+1}_{\gamma}) - \nabla f(y^{k})}^2,
  \end{align*}
  where $0 \leq \gamma \leq \frac{1}{\ell(2 \norm{\nabla f(y^k)})}$ is a free parameter. Let us take the smallest $\gamma$ such that 
  \begin{align*}
    g(\gamma) \eqdef \gamma - \frac{1}{\ell(2 \norm{\nabla f(y^{k})} + \norm{\nabla f(y^{k + 1}_{\gamma})})} = 0
  \end{align*}
  and denote it as $\gamma^*$ (exists similarly to the proof of Theorem~\ref{thm:convex_increasing} and $\gamma^* \leq \frac{1}{\ell(2 \norm{\nabla f(y^k)})}$). For all $\gamma \leq \gamma^*,$ $g(\gamma) \leq 0$ and 
  \begin{equation}
  \label{eq:JGNJubwtwo}
  \begin{aligned}
    &(1 + \alpha_{k,\gamma})(f(y^{k+1}_{\gamma}) - f(x^*)) + \frac{(1 + \alpha_{k,\gamma}) \Gamma_{k+1, \gamma}}{2} \norm{u^{k+1}_{\gamma} - x^*}^2\\
    &\leq (f(y^{k}) - f(x^*)) + \frac{\Gamma_{k}}{2} \norm{u^{k} - x^*}^2 =: V_k.
  \end{aligned}
  \end{equation}
  Recall that 
  \begin{align*}
    V_{k} \leq \left(\prod_{i=0}^{k-1} \frac{1}{1 + \alpha_i}\right) V_0 = \frac{\Gamma_k}{\Gamma_0} V_0 \overset{\eqref{eq:LHzDkKZJSMGGGzeUTdc}}{\leq} \Gamma_k \bar{R}^2.
  \end{align*}
  Therefore,
  \begin{align*}
    &f(y^{k+1}_{\gamma}) - f(x^*) \overset{\eqref{eq:JGNJubwtwo}}{\leq} \frac{\Gamma_k \bar{R}^2}{1 + \alpha_{k,\gamma}}
  \end{align*}
  and 
  \begin{align}
    \label{eq:lwWFlsHXpktXywvzxd}
    \norm{\nabla f(y^{k+1}_{\gamma})} \leq \psi^{-1}\left(\frac{\Gamma_k \bar{R}^2}{1 + \alpha_{k,\gamma}} \right) \leq \psi^{-1}\left(\Gamma_k \bar{R}^2 \right)
  \end{align}
  for all $\gamma \leq \gamma^*$ due to Lemma~\ref{lemma:func_to_grad_spec}. Therefore, by the definition of $\gamma^*$ and using $\norm{\nabla f(y^{k})} \leq \psi^{-1}(\Gamma_{k} \bar{R}^2),$
  \begin{align*}
    \gamma^* = \frac{1}{\ell(2 \norm{\nabla f(y^{k})} + \norm{\nabla f(y^{k + 1}_{\gamma^*})})} \geq \frac{1}{\max\{\ell(4 \norm{\nabla f(y^{k})}), \ell(4 \norm{\nabla f(y^{k + 1}_{\gamma^*})})\}} \geq \frac{1}{\ell\left(4 \psi^{-1}\left(\Gamma_k \bar{R}^2 \right)\right)},
  \end{align*}
  meaning that we can take $\gamma_k = \frac{1}{\ell\left(4 \psi^{-1}\left(\Gamma_k \bar{R}^2 \right)\right)}$ and \eqref{eq:JGNJubw} holds:
  \begin{align*}
    (1 + \alpha_{k,\gamma})(f(y^{k+1}_{\gamma}) - f(x^*)) + \frac{(1 + \alpha_{k,\gamma}) \Gamma_{k+1, \gamma}}{2} \norm{u^{k+1}_{\gamma} - x^*}^2 \leq V_k.
  \end{align*}
  Notice that $\alpha_{k,\gamma} = \alpha_k,$ $y^{k+1}_{\gamma} = y^{k+1},$ $\Gamma_{k+1, \gamma} = \Gamma_{k+1},$ and $u^{k+1}_{\gamma} = u^{k+1}$ with $\gamma = \frac{1}{\ell\left(4 \psi^{-1}\left(\Gamma_k \bar{R}^2 \right)\right)}.$ Therefore, $(1 + \alpha_{k,\gamma})(f(y^{k+1}_{\gamma}) - f(x^*)) + \frac{(1 + \alpha_{k,\gamma}) \Gamma_{k+1, \gamma}}{2} \norm{u^{k+1}_{\gamma} - x^*}^2 = (1 + \alpha_{k}) V_{k+1},$
  \begin{align*}
    \norm{\nabla f(y^{k+1})} \overset{\eqref{eq:lwWFlsHXpktXywvzxd}}{\leq} \psi^{-1}\left(\frac{\Gamma_{k} \bar{R}^2}{1 + \alpha_{k}} \right) = \psi^{-1}\left(\Gamma_{k+1} \bar{R}^2 \right)
  \end{align*}
  and
  \begin{align*}
    V_{k+1} \leq \frac{1}{1 + \alpha_k} V_k \leq \left(\prod_{i=0}^{k} \frac{1}{1 + \alpha_i}\right) V_0,
  \end{align*}
  We have proved the next step of the induction. Finally, for all $k \geq 0,$
  \begin{align*}
    f(y^{k+1}) - f(x^*) 
    &\leq V_{k+1} \leq \Gamma_{k+1} \left(\frac{1}{\Gamma_0}(f(y^{0}) - f(x^*)) + \frac{1}{2} \norm{y^{0} - x^*}^2\right) \leq \Gamma_{k+1} \norm{x^{0} - x^*}^2
  \end{align*}
  because $\Gamma_{0} \geq \frac{2 (f(y^{0}) - f(x^*))}{\norm{y^{0} - x^*}^2},$ $\Gamma_{k+1} = \Gamma_{0} \left(\prod_{i=0}^{k} \frac{1}{1 + \alpha_i}\right),$ and $y^0 = x^0.$
\end{proof}

\THEORESUBBETTERRATE*

\begin{proof}
  Since $\gamma_k = 1 / \ell\left(4 \psi^{-1}\left(\Gamma_k \bar{R}^2 \right)\right) \geq \gamma_0 \eqdef 1 / \ell\left(4 \psi^{-1}\left(\Gamma_0 \bar{R}^2 \right)\right)$ for all $k \geq 0$ in Algorithm~\ref{alg:main_new}, and by Theorem~\ref{thm:gamma}, we conclude that 
  \begin{align}
    \label{eq:bbwfdsxPZiKSQFfK}
    \Gamma_{k} \leq \frac{9 \ell\left(4 \psi^{-1}\left(\Gamma_0 \bar{R}^2 \right)\right)}{\left(k - \bar{k}_1\right)^2}
  \end{align}
  for all $k > \bar{k}_1 \eqdef \max\left\{1 + \frac{1}{2} \log_{3/2}\left(\frac{\Gamma_0}{4 \ell\left(4 \psi^{-1}\left(\Gamma_0 \bar{R}^2 \right)\right)}\right), 0\right\}.$ 
  As in the proof of Lemma~\ref{lemma:bound_sub} (take $\delta = \Gamma_k \bar{R}^2$ in \eqref{eq:ybjPYO}):
  \begin{align}
    \label{eq:RdlkoZDiOPoLBgfaPwhF}
    \ell\left(4 \psi^{-1}\left(\Gamma_k \bar{R}^2 \right)\right) \leq 2 \ell(0) \Leftrightarrow \ell\left(8 \sqrt{\Gamma_k \bar{R}^2 \ell\left(0\right)}\right) \leq 2 \ell\left(0\right).
  \end{align}
  Let $k_{\textnormal{init}}$ be the smallest integer such that 
  \begin{align*}
    \ell\left(24 \sqrt{\frac{\ell\left(4 \psi^{-1}\left(\Gamma_0 \bar{R}^2 \right)\right) \ell\left(0\right) \bar{R}^2}{k_{\textnormal{init}}^2} }\right) \leq 2 \ell\left(0\right).
  \end{align*}
  Note that $k_{\textnormal{init}} < \infty,$ because $\ell$ is non-decreasing and continuous. Thus,
  \begin{align*}
    \ell\left(8 \sqrt{\Gamma_{k} \bar{R}^2 \ell\left(0\right)}\right) \leq 2 \ell\left(0\right)
  \end{align*}
  for all $k \geq k_{\textnormal{init}} + \bar{k}_1$ due to \eqref{eq:bbwfdsxPZiKSQFfK}, and $\gamma_k \geq \frac{1}{2 \ell(0)}$ for all $k \geq k_{\textnormal{init}} + \bar{k}_1$ due to \eqref{eq:RdlkoZDiOPoLBgfaPwhF}. We now repeat the previous arguments once again. Using Theorem~\ref{thm:gamma} with $\Gamma_0 \equiv \Gamma_{k_{\textnormal{init}} + \bar{k}_1}$, we conclude that 
  \begin{align*}
    \Gamma_{k+1 + k_{\textnormal{init}} + \bar{k}_1} \leq \frac{19 \ell(0)}{\left(k + 1 - \bar{k}\right)^2}
  \end{align*}
  for all $k \geq \bar{k} \eqdef \max\left\{1 + \frac{1}{2} \log_{3/2}\left(\frac{\Gamma_{k_{\textnormal{init}} + \bar{k}_1}}{8 \ell(0)}\right), 0\right\}.$ It is left to choose $k \geq \bar{k}$ such that 
  \begin{align*}
    \frac{19 \ell(0) R^2}{\left(k + 1 - \bar{k}\right)^2} \leq \varepsilon
  \end{align*}
  and use Theorem~\ref{thm:convex_increasing_better} to get the total oracle complexity
  \begin{align*}
  &\frac{5 \sqrt{\ell(0)} R}{\sqrt{\varepsilon}} + \max\left\{1 + \frac{1}{2} \log_{3/2}\left(\frac{\Gamma_{k_{\textnormal{init}} + \bar{k}_1}}{8 \ell(0)}\right), 0\right\} + k_{\textnormal{init}} + \max\left\{1 + \frac{1}{2} \log_{3/2}\left(\frac{\Gamma_0}{4 \ell\left(4 \psi^{-1}\left(\Gamma_0 \bar{R}^2 \right)\right)}\right), 0\right\} \\
  &\leq \frac{5 \sqrt{\ell(0)} R}{\sqrt{\varepsilon}} + k_{\textnormal{init}} + \max\left\{2 + \log_{3/2}\left(\frac{\Gamma_{0}}{4 \ell(0)}\right), 0\right\}
  \end{align*}
  because $\Gamma_k \leq \Gamma_0$ for all $k \geq 0$ and $\ell$ is non-decreasing.
\end{proof}

\subsubsection{Specialization for $(L_0, L_1)$--smoothness}
\THEORESUBBETTERRATEREFINE*

\begin{proof}
  Since $\psi(x) = \frac{x^2}{2 L_0 + 8 L_1 x},$ we get 
  \begin{align*}
  \psi^{-1}(t) = 4 L_1 t + \sqrt{16 L_1^2 t^2 + 2 L_0 t} \leq 8 L_1 t + \sqrt{2 L_0 t}
  \end{align*}
  for all $t \geq 0,$ and 
  \begin{align*}
    \gamma_k 
    &= \frac{1}{\ell\left(4 \psi^{-1}\left(\Gamma_k \bar{R}^2 \right)\right)} 
    \geq \frac{1}{L_0 + 4 L_1 (8 L_1 \Gamma_k \bar{R}^2  + \sqrt{2 L_0 \Gamma_k \bar{R}^2})} \\
    &= \frac{1}{L_0 + 32 L_1^2 \Gamma_k \bar{R}^2  + 4 L_1 \sqrt{2 L_0 \Gamma_k \bar{R}^2}} \overset{\textnormal{AM-GM}}{\geq} \frac{1}{2 L_0 + 48 L_1^2 \bar{R}^2 \Gamma_k}.
  \end{align*}
  Let $0 \leq k^* < \infty$ be the smallest $k$ such that $L_1^2 \bar{R}^2 \Gamma_k < L_0.$
  For all $k < k^*,$ we get $L_1^2 \bar{R}^2 \Gamma_k \geq L_0,$ $\gamma_{k} \geq \frac{1}{50 L_1^2 \bar{R}^2 \Gamma_{k}},$ and $\alpha_{k} \geq \frac{1}{8 L_1 \bar{R}}$ since $\Gamma_k$ is decreasing. Then,
  \begin{align*}
    \Gamma_{{k}+1} \leq \frac{\Gamma_{{k}}}{1 + \frac{1}{8 L_1 \bar{R}}}.
  \end{align*}
  for all $k < k^*.$
  We can unroll the recursion to get
  \begin{align}
    \label{eq:muQbfLvZyCGAqrixP}
    \Gamma_{k + 1} \leq \left(\frac{1}{1 + \frac{1}{8 L_1 \bar{R}}}\right)^{k + 1} \Gamma_0 \leq \exp\left(- \frac{k + 1}{8 L_1 \bar{R} + 1}\right) \Gamma_0.
  \end{align}
  for all $k < k^*.$ For all $k \geq k^*,$ $L_1^2 \bar{R}^2 \Gamma_k < L_0,$ $\gamma_k \geq \frac{1}{50 L_0},$ and can we use Theorem~\ref{thm:gamma} starting form the index $k^*:$
  \begin{align*}
    \Gamma_{k + k^*} \leq \frac{450 L_0}{\left(k - \bar{k}\right)^2}
  \end{align*}
  for all $k > \bar{k},$ where 
  \begin{align}
    \label{eq:rjSmRkLCAODUD}
    \bar{k} \eqdef \max\left\{1 + \frac{1}{2} \log_{3/2}\left(\frac{\Gamma_{k^*}}{200 L_0}\right), 0\right\} \leq \max\left\{1 + \frac{1}{2} \log_{3/2}\left(\frac{\Gamma_{0}}{200 L_0}\right), 0\right\},
  \end{align}
  where the first inequality due to $\Gamma_{k^*} \leq \Gamma_0.$ If $k^* = 0,$ then
  \begin{align*}
    \Gamma_{k} \leq \frac{450 L_0}{\left(k - \bar{k}\right)^2}
  \end{align*}
  for all $k > \bar{k}.$ If $k^* > 0,$ then
  \begin{align*}
    \frac{L_0}{L_1^2 \bar{R}^2} \leq \Gamma_{k^* - 1} \overset{\eqref{eq:muQbfLvZyCGAqrixP}}{\leq} \exp\left(- \frac{k^* - 1}{8 L_1 \bar{R} + 1}\right) \Gamma_0
  \end{align*}
  and 
  \begin{align*}
    k^* \leq 1 + \left(8 L_1 \bar{R} + 1\right)\log\left(\frac{L_1^2 \bar{R}^2 \Gamma_0}{L_0}\right).
  \end{align*}
  In total,
  \begin{align}
    \label{eq:npWuFLeurMR}
    k^* \leq \max\left\{1 + \left(8 L_1 \bar{R} + 1\right)\log\left(\frac{L_1^2 \bar{R}^2 \Gamma_0}{L_0}\right), 0\right\}.
  \end{align}
  There are two main regimes of $\Gamma_{k}$. The first regime is
  \begin{align}
    \label{eq:dtowaBzwKvS}
    \Gamma_{k} \leq \frac{450 L_0}{\left(k - (\bar{k} + k^*)\right)^2}
  \end{align}
  for all $k > \bar{k} + k^*,$ and for all
  \begin{align*}
    k \geq \max\left\{1 + \left(8 L_1 \bar{R} + 1\right)\log\left(\frac{L_1^2 \bar{R}^2 \Gamma_0}{L_0}\right), 0\right\} + \max\left\{2 + 3\log\left(\frac{\Gamma_{0}}{200 L_0}\right), 0\right\},
  \end{align*}
  due to \eqref{eq:rjSmRkLCAODUD} and \eqref{eq:npWuFLeurMR}.
  The second regime is 
  \begin{align}
    \label{eq:BozWiZ}
    \Gamma_{k} \leq \exp\left(- \frac{k}{8 L_1 \bar{R} + 1}\right) \Gamma_0
  \end{align}
  for all $k \leq k^*$ due to \eqref{eq:muQbfLvZyCGAqrixP}. 
  
  Using Theorem~\ref{thm:convex_increasing_better},
  \begin{align*}
    f(y^{k+1}) - f(x^*) \leq \Gamma_{k+1} R^2.
  \end{align*}
  If $\frac{L_1^2 \bar{R}^2 \Gamma_0}{L_0} \leq \frac{\Gamma_0 R^2}{\varepsilon},$
  then $f(y^{k+1}) - f(x^*) \leq \varepsilon$ after
  \begin{align*}
    \cO\left(\frac{\sqrt{L_0} R}{\sqrt{\varepsilon}} + \max\left\{\left(L_1 \bar{R} + 1\right)\log\left(\frac{L_1^2 \bar{R}^2 \Gamma_0}{L_0}\right), 0\right\} + \max\left\{\log\left(\frac{\Gamma_{0}}{L_0}\right), 0\right\}\right)
  \end{align*}
  iterations due to \eqref{eq:dtowaBzwKvS}. If $\frac{L_1^2 \bar{R}^2 \Gamma_0}{L_0} > \frac{\Gamma_0 R^2}{\varepsilon}$ and $k^* > (8 L_1 \bar{R} + 1) \log\left((\Gamma_0 R^2) / \varepsilon\right),$ then $f(y^{k+1}) - f(x^*) \leq \varepsilon$ after
  \begin{align*}
    \cO\left((L_1 \bar{R} + 1) \log\left(\frac{\Gamma_0 R^2}{\varepsilon}\right)\right)
  \end{align*}
  iterations due to \eqref{eq:BozWiZ}. If $\frac{L_1^2 \bar{R}^2 \Gamma_0}{L_0} > \frac{\Gamma_0 R^2}{\varepsilon}$ and $k^* \leq (8 L_1 \bar{R} + 1) \log\left((\Gamma_0 R^2) / \varepsilon\right),$ then $f(y^{k+1}) - f(x^*) \leq \varepsilon$ after
  \begin{align*}
    \cO\left(\frac{\sqrt{L_0} R}{\sqrt{\varepsilon}} + (L_1 \bar{R} + 1) \log\left(\frac{\Gamma_0 R^2}{\varepsilon}\right) + \max\left\{\log\left(\frac{\Gamma_{0}}{L_0}\right), 0\right\}\right)
  \end{align*}
  iterations due to \eqref{eq:dtowaBzwKvS}. It left to combine all cases.
\end{proof}

\subsection{Superquadratic Growth of $\ell$}

\THEORESUPER*

\begin{proof}
  In our proof, we define the Lyapunov function $V_{k} \eqdef f(y^{k}) - f(x^*) + \frac{\Gamma_{k}}{2} \norm{u^{k} - x^*}^2.$ 
  
  \emph{(Base case:)} Clearly, $\norm{u^0 - x^*} = \norm{y^0 - x^*} \leq \norm{x^0 - x^*} \leq 2 \bar{R}$ due the the monotonicity of \algname{GD} \citep{tyurin2024toward}[Lemma I.2] and $\bar{R} \geq R.$ Moreover, $V_{0} \leq V_{0}$ and 
  \begin{equation}
  \begin{aligned}
    \label{eq:WQjOO}
    f(y^{0}) - f(x^*) \leq V_{0} 
    &= f(y^{0}) - f(x^*) + \frac{\Gamma_{0}}{2} \norm{y^{0} - x^*}^2 \leq \frac{\delta}{2} + \frac{\Gamma_{0}}{2} \norm{y^{0} - x^*}^2 \\
    &\leq \frac{\delta}{2} + \frac{\Gamma_{0}}{2} \norm{x^{0} - x^*}^2 \leq \delta \leq \Delta
  \end{aligned}
  \end{equation}
  since $\Gamma_{0} = \frac{\delta}{\bar{R}^2},$ $\bar{R} \geq \norm{x^{0} - x^*},$ and $\delta \leq \Delta.$
  
  Thus,
  \begin{align*}
    \norm{\nabla f(y^0)} \leq \max_{f(x) - f(x^*) \leq \Delta, \norm{x - x^*} \leq 2 \bar{R}} \norm{\nabla f(x)} \leq M_{\bar{R}}.
  \end{align*}
  Using Lemma~\ref{lemma:func_to_grad_spec_diff}, either $\norm{\nabla f(y^0)} \leq \Delta_{\textnormal{left}}(\delta)$ or $\norm{\nabla f(y^0)} \geq \Delta_{\textnormal{right}}(\delta).$ However, the latter is not possible because $\Delta_{\textnormal{right}}(\delta) > M_{\bar{R}}$ and $\norm{\nabla f(y^0)} \leq M_{\bar{R}}.$ Thus, $\ell\left(4 \norm{\nabla f(y^0)}\right) \leq \ell(4 \Delta_{\textnormal{left}}(\delta)) \leq 2 \ell(0),$
  where the last inequality due to the conditions of the theorem.
  
  Using mathematical induction, we assume that 
  $\ell\left(4 \norm{\nabla f(y^k)}\right) \leq 2 \ell(0),$
  \begin{align}
    \label{eq:hMMXuAoUGQxLgJoaa}
    V_{k} \leq \left(\prod_{i=0}^{k-1} \frac{1}{1 + \alpha_i}\right) V_0,
  \end{align}
  $\norm{u^k - x^*} \leq 2 \bar{R},$ and $\norm{y^k - x^*} \leq 2 \bar{R}$ for some $k \geq 0$ (the base case has been proved in the previous steps).

  Consider Lemma~\ref{thm:main_lemma} and the steps \eqref{eq:TqfxanGJGnaQsmS}. Then,
  \begin{align*}
    &(1 + \alpha_{k,\gamma})(f(y^{k+1}_{\gamma}) - f(x^*)) + \frac{(1 + \alpha_{k,\gamma}) \Gamma_{k+1, \gamma}}{2} \norm{u^{k+1}_{\gamma} - x^*}^2 - \left((f(y^{k}) - f(x^*)) + \frac{\Gamma_{k}}{2} \norm{u^{k} - x^*}^2\right) \nonumber \\
    &\leq \frac{1}{2} \left(\gamma - \frac{1}{\ell(2 \norm{\nabla f(y^{k})} + \norm{\nabla f(y^{k + 1}_{\gamma})})}\right) \norm{\nabla f(y^{k+1}_{\gamma}) - \nabla f(y^{k})}^2,
  \end{align*}
  where $0 \leq \gamma \leq \frac{1}{\ell(2 \norm{\nabla f(y^k)})}$ is a free parameter. Let us take the smallest $\gamma$ such that 
  \begin{align*}
    g(\gamma) \eqdef \gamma - \frac{1}{\ell(2 \norm{\nabla f(y^{k})} + \norm{\nabla f(y^{k + 1}_{\gamma})})} = 0
  \end{align*}
  and denote it as $\gamma^*$ (exists similarly to the proof of Theorem~\ref{thm:convex_increasing} and $\gamma^* \leq \frac{1}{\ell(2 \norm{\nabla f(y^k)})}$). 
  For all $\gamma \leq \gamma^*,$ $g(\gamma) \leq 0$ and 
  \begin{equation}
  \label{eq:WCsqvicNrlbLMmi}
  \begin{aligned}
    &(1 + \alpha_{k,\gamma})(f(y^{k+1}_{\gamma}) - f(x^*)) + \frac{(1 + \alpha_{k,\gamma}) \Gamma_{k+1, \gamma}}{2} \norm{u^{k+1}_{\gamma} - x^*}^2\\
    &\leq (f(y^{k}) - f(x^*)) + \frac{\Gamma_{k}}{2} \norm{u^{k} - x^*}^2 =: V_k,
  \end{aligned}
  \end{equation}
  which ensures that 
  \begin{align}
    \label{eq:GQhgMNQ}
    &f(y^{k+1}_{\gamma}) - f(x^*) \leq V_k \overset{\eqref{eq:hMMXuAoUGQxLgJoaa}}{\leq} V_0 \overset{\eqref{eq:WQjOO}}{\leq} \delta \leq \Delta.
  \end{align}
  Moreover, due to \eqref{eq:WCsqvicNrlbLMmi} and \eqref{eq:TqfxanGJGnaQsmS}, we have
  \begin{align*}
    \frac{\Gamma_{k}}{2} \norm{u^{k+1}_{\gamma} - x^*}^2 
    &= \frac{(1 + \alpha_{k,\gamma}) \Gamma_{k+1, \gamma}}{2} \norm{u^{k+1}_{\gamma} - x^*}^2 \leq V_k \\
    &\overset{\eqref{eq:hMMXuAoUGQxLgJoaa}}{\leq} \left(\prod_{i=0}^{k-1} \frac{1}{1 + \alpha_i}\right) V_0 = \frac{\Gamma_k}{\Gamma_0} \left((f(y^{0}) - f(x^*)) + \frac{\Gamma_{0}}{2} \norm{u^{0} - x^*}^2\right) \\
    &\overset{\textnormal{Alg.~\ref{alg:main}}}{\leq} \Gamma_k \left(\frac{\delta}{2 \Gamma_0} + \frac{1}{2} \norm{u^{0} - x^*}^2\right) \leq \Gamma_k \bar{R}^2,
  \end{align*}
  where the last inequality due to $\Gamma_{0} = \frac{\delta}{\bar{R}^2}$ and $\norm{u^{0} - x^*}^2 \leq \bar{R}^2.$ Thus,
  \begin{align}
    \label{eq:uqPpcQwSacrWB}
    \norm{u^{k+1}_{\gamma} - x^*}^2 \leq 2 \bar{R}
  \end{align}
  for all $\gamma \leq \gamma^*.$ Now, consider $y^{k+1}_{\gamma}$ from \eqref{eq:TqfxanGJGnaQsmS}:
  \begin{equation}
  \label{eq:sTkqnDdGuBzMyOyjwlAE}
  \begin{aligned}
    &\norm{y^{k+1}_{\gamma} - x^*} \\
    &= \norm{\frac{1}{1 + \alpha_{k,\gamma}} y^{k} + \frac{\alpha_{k,\gamma}}{1 + \alpha_{k,\gamma}} u^{k} - \frac{\gamma}{1 + \alpha_{k,\gamma}} \nabla f(y^{k}) - x^*} \\
    &= \norm{\frac{1}{1 + \alpha_{k,\gamma}} \left(\left(y^{k} - \gamma \nabla f(y^{k})\right) - x^*\right) + \frac{\alpha_{k,\gamma}}{1 + \alpha_{k,\gamma}} (u^{k} -  x^*)} \\
    &\leq \frac{1}{1 + \alpha_{k,\gamma}} \norm{\left(y^{k} - \gamma \nabla f(y^{k})\right) - x^*} + \frac{\alpha_{k,\gamma}}{1 + \alpha_{k,\gamma}} \norm{u^{k} -  x^*},
  \end{aligned}
  \end{equation}
  where we use Triangle's inequality. Notice that 
  \begin{align}
    \label{eq:FAJvbtdJQHTSpIXPGE}
    \gamma \leq \frac{1}{\ell(2 \norm{\nabla f(y^{k})})}
  \end{align}
  for all $\gamma \leq \gamma^*$ because $\gamma^* \leq \frac{1}{\ell(2 \norm{\nabla f(y^{k})})}.$ Thus,
  \begin{align*}
    &\norm{\left(y^{k} - \gamma \nabla f(y^{k})\right) - x^*}^2 
    = \norm{y^{k} - x^*}^2 - 2 \gamma \inp{y^{k} - x^*}{\nabla f(y^{k})} + \gamma^2 \norm{\nabla f(y^{k})}^2\\
    &\overset{\textnormal{L.\,\ref{lemma:smooth_convex}}}{\leq} \norm{y^{k} - x^*}^2 + 2 \gamma \left(f(x^*) - f(y^k) - \norm{\nabla f(y^k)}^2 \int_{0}^{1} \frac{1 
    - v}{\ell(\norm{\nabla f(y^k)} v)} d v\right) + \gamma^2 \norm{\nabla f(y^{k})}^2\\
    &\leq \norm{y^{k} - x^*}^2 + \gamma \norm{\nabla f(y^{k})}^2 \left(\gamma - 2 \int_{0}^{1} \frac{1 
    - v}{\ell(\norm{\nabla f(y^k)} v)} d v\right).
  \end{align*}
  In the last inequality, we use $f(x^*) - f(y^k) \leq 0.$ Next,
  \begin{align*}
    \norm{\left(y^{k} - \gamma \nabla f(y^{k})\right) - x^*}^2 
    &\overset{\eqref{eq:FAJvbtdJQHTSpIXPGE}}{\leq} \norm{y^{k} - x^*}^2 + \gamma \norm{\nabla f(y^{k})}^2 \left(\frac{1}{\ell(2 \norm{\nabla f(y^{k})})} - 2 \int_{0}^{1} \frac{1 
    - v}{\ell(\norm{\nabla f(y^k)} v)} d v\right) \\
    &\leq \norm{y^{k} - x^*}^2 + \gamma \norm{\nabla f(y^{k})}^2 \left(\frac{1}{\ell(2 \norm{\nabla f(y^{k})})} - \frac{1}{\ell(\norm{\nabla f(y^k)})}\right) \\
    &\leq \norm{y^{k} - x^*}^2
  \end{align*}
  because $\ell$ is non-decreasing. Thus, by the induction assumption, $\norm{\left(y^{k} - \gamma \nabla f(y^{k})\right) - x^*} \leq \norm{y^{k} - x^*} \leq 2 \bar{R},$ $\norm{u^{k} -  x^*} \leq 2 \bar{R},$ and
  \begin{align}
    \label{eq:PzTfxxvAahIZEXWcHRe}
    \norm{y^{k+1}_{\gamma} - x^*} \leq 2 \bar{R}
  \end{align}
  for all $\gamma \leq \gamma^*,$ due to \eqref{eq:sTkqnDdGuBzMyOyjwlAE}.

  Using the last inequality and \eqref{eq:GQhgMNQ},
  \begin{align*}
    \norm{\nabla f(y^{k+1}_{\gamma})} \leq \max_{f(x) - f(x^*) \leq \Delta, \norm{x - x^*} \leq 2 \bar{R}} \norm{\nabla f(x)} \leq M_{\bar{R}}.
  \end{align*}
  Using \eqref{eq:GQhgMNQ} and Lemma~\ref{lemma:func_to_grad_spec_diff}, either $\norm{\nabla f(y^{k+1}_{\gamma})} \leq \Delta_{\textnormal{left}}(\delta)$ or $\norm{\nabla f(y^{k+1}_{\gamma})} \geq \Delta_{\textnormal{right}}(\delta).$ However, the latter is not possible because $\Delta_{\textnormal{right}}(\delta) > M_{\bar{R}}$ and $\norm{\nabla f(y^{k+1}_{\gamma})} \leq M_{\bar{R}}.$ Thus, 
  \begin{align}
    \label{eq:YfdEfhm}
    \ell\left(4 \norm{\nabla f(y^{k+1}_{\gamma})}\right) \leq \ell(4 \Delta_{\textnormal{left}}(\delta)) \leq 2 \ell(0).
  \end{align}
  Therefore, by the definition of $\gamma^*$ and using $\ell\left(4 \norm{\nabla f(y^{k})}\right) \leq 2 \ell(0),$
  \begin{align*}
    \gamma^* = \frac{1}{\ell(2 \norm{\nabla f(y^{k})} + \norm{\nabla f(y^{k + 1}_{\gamma^*})})} \geq \frac{1}{\max\{\ell(4 \norm{\nabla f(y^{k})}), \ell(4 \norm{\nabla f(y^{k + 1}_{\gamma^*})})\}} \geq \frac{1}{2 \ell(0)},
  \end{align*}
  meaning that we can take $\gamma = \frac{1}{2 \ell(0)}$ and \eqref{eq:WCsqvicNrlbLMmi} holds:
  \begin{align*}
    (1 + \alpha_{k,\gamma})(f(y^{k+1}_{\gamma}) - f(x^*)) + \frac{(1 + \alpha_{k,\gamma}) \Gamma_{k+1, \gamma}}{2} \norm{u^{k+1}_{\gamma} - x^*}^2 \leq V_k.
  \end{align*}
  Notice that $\alpha_{k,\gamma} = \alpha_k,$ $y^{k+1}_{\gamma} = y^{k+1},$ $\Gamma_{k+1, \gamma} = \Gamma_{k+1},$ and $u^{k+1}_{\gamma} = u^{k+1}$ with $\gamma = \frac{1}{2 \ell(0)}.$ Therefore, $(1 + \alpha_{k,\gamma})(f(y^{k+1}_{\gamma}) - f(x^*)) + \frac{(1 + \alpha_{k,\gamma}) \Gamma_{k+1, \gamma}}{2} \norm{u^{k+1}_{\gamma} - x^*}^2 = (1 + \alpha_{k}) V_{k+1},$
  \begin{align*}
    \ell\left(4 \norm{\nabla f(y^{k+1})}\right) \overset{\eqref{eq:YfdEfhm}}{\leq} 2 \ell(0),
  \end{align*}
  \begin{align*}
    V_{k+1} \leq \frac{1}{1 + \alpha_k} V_k \leq \left(\prod_{i=0}^{k} \frac{1}{1 + \alpha_i}\right) V_0,
  \end{align*}
  \begin{align*}
    \norm{u^{k+1} - x^*}^2 \overset{\eqref{eq:uqPpcQwSacrWB}}{\leq} 2 \bar{R},
  \end{align*}
  and
  \begin{align*}
    \norm{y^{k+1} - x^*} \overset{\eqref{eq:PzTfxxvAahIZEXWcHRe}}{\leq} 2 \bar{R}.
  \end{align*}
  
  We have proved the next step of the induction. Finally, for all $k \geq 0,$
  \begin{align*}
    f(y^{k+1}) - f(x^*) 
    &\leq V_{k+1} \leq \left(\prod_{i=0}^{k} \frac{1}{1 + \alpha_i}\right) \left(f(y^{0}) - f(x^*) + \frac{\Gamma_{0}}{2} \norm{y^{0} - x^*}^2\right) \\
    &\leq \Gamma_{0} \left(\prod_{i=0}^{k} \frac{1}{1 + \alpha_i}\right) \left(\frac{\delta}{2 \Gamma_{0}} + \frac{1}{2} \norm{y^{0} - x^*}^2\right) 
    \leq \Gamma_{k+1} \bar{R}^2
  \end{align*}
  because \algname{GD} by \citep{tyurin2024toward}[Lemma I.2] returns $\bar{x} = y^0$ such that $\norm{y^0 - x^*} \leq \norm{x^0 - x^*} \leq \bar{R}.$ Moreover, we use $\Gamma_{0} = \frac{\delta}{\bar{R}^2}$ and $\Gamma_{k+1} = \Gamma_{0} \left(\prod_{i=0}^{k} \frac{1}{1 + \alpha_i}\right).$ It is left to use Theorem~\ref{thm:gamma}.
\end{proof}

\THEORESUPERRATE*

\begin{proof}
  The proof of this theorem repeats the proof of Theorem~\ref{thm:convex_increasing_rate}, with the only change being that the conditions on $\delta$ are different.
\end{proof}

\end{document}